\documentclass[10pt,reqno]{amsart}
\usepackage[top=.8in, left=.8in, right=.8in, bottom=.8in]{geometry}

\usepackage{savesym}
\savesymbol{checkmark}
\usepackage{nccmath, soul, dingbat}
\allowdisplaybreaks
\numberwithin{equation}{section}
\usepackage{graphicx}
\graphicspath{ {./images/} }

\usepackage[normalem]{ulem}
\makeatletter 
\def\@tocline#1#2#3#4#5#6#7{\relax
	\ifnum #1>\c@tocdepth 
	\else
	\par \addpenalty\@secpenalty\addvspace{#2}%
	\begingroup \hyphenpenalty\@M
	\@ifempty{#4}{%
		\@tempdima\csname r@tocindent\number#1\endcsname\relax
	}{%
		\@tempdima#4\relax
	}%
	\parindent\z@ \leftskip#3\relax \advance\leftskip\@tempdima\relax
	\rightskip\@pnumwidth plus4em \parfillskip-\@pnumwidth
	#5\leavevmode\hskip-\@tempdima
	\ifcase #1
	\or\or \hskip 1em \or \hskip 2em \else \hskip 3em \fi%
	#6\nobreak\relax
	\dotfill\hbox to\@pnumwidth{\@tocpagenum{#7}}\par
	\nobreak
	\endgroup
	\fi}
\makeatother
\usepackage{transparent}
\usepackage{amsmath,amsthm,amssymb}
\usepackage[initials,msc-links]{amsrefs}
\usepackage{url}
\usepackage[dvipsnames]{xcolor}
\usepackage[english=american]{csquotes}
\usepackage{enumerate}
\usepackage{textcomp}
\usepackage{stmaryrd} 
\SetSymbolFont{stmry}{bold}{U}{stmry}{m}{n}
\usepackage{mathrsfs}
\usepackage{mathtools}
\usepackage{color, colortbl}
\definecolor{Gray}{gray}{0.9}
\usepackage{bbm}
\usepackage{hyperref}
\usepackage{stmaryrd}
\usepackage{xfrac}
\usepackage{wrapfig}
\hypersetup{
	unicode=false,          
	pdftoolbar=true,        
	pdfmenubar=true,        
	pdffitwindow=false,     
	pdfstartview={FitH},    
	pdftitle={My title},    
	pdfauthor={Author},     
	pdfsubject={Subject},   
	pdfcreator={Creator},   
	pdfproducer={Producer}, 
	pdfkeywords={keyword1, key2, key3}, 
	pdfnewwindow=true,      
	colorlinks=true,       
	linkcolor=blue ,          
	citecolor=blue ,        
	filecolor=magenta,      
	urlcolor=Aquamarine           
}
\usepackage{xcolor}
\usepackage{epsfig,color,graphicx,graphics}
\usepackage{caption}
\usepackage{amsfonts}
\usepackage{tikz}
\usepackage{siunitx}
\usepackage{booktabs,colortbl, array}
\usepackage{pgfplotstable}
\pgfplotsset{compat=1.8}

\definecolor{rulecolor}{RGB}{0,71,171}
\definecolor{tableheadcolor}{gray}{0.92}

\def\a#1{\llbracket #1 \rrbracket}

\newcommand\restr[2]{{
		\left.\kern-\nulldelimiterspace 
		#1 
		\right|_{#2} 
}}
\newcommand\res{\mathop{\hbox{\vrule height 7pt width .3pt depth 0pt
\vrule height .3pt width 5pt depth 0pt}}\nolimits}

\newtheorem{theorem}{Theorem}[section]
\newtheorem{lemma}[theorem]{Lemma}
\newtheorem{proposition}[theorem]{Proposition}

\theoremstyle{definition}

\newtheorem{remark}[theorem]{Remark}

\newtheorem{question}[theorem]{Question}

\newcommand{\cc}{\subset\!\subset}

\newcommand{\Ecal}{\mathcal{E}}

\newcommand{\Gcal}{\mathcal{G}}
\newcommand{\Hcal}{\mathcal{H}}

\newcommand{\Pcal}{\mathcal{P}}

\newcommand{\Rcal}{\mathcal{R}}

\newcommand{\Xcal}{\mathcal{X}}

\newcommand{\Escr}{\mathscr{E}}

\newcommand{\Gscr}{\mathscr{G}}

\newcommand{\Jbf}{\mathbf{J}}

\newcommand{\Mbb}{\mathbb{M}}

\newcommand{\Sbb}{\mathbb{S}}

\DeclareMathOperator{\co}{co}

\DeclareMathOperator{\graph}{graph}

\DeclareMathOperator{\dist}{dist}

\newcommand{\R}{\mathbb{R}}

\newcommand{\spt}{\mathrm{spt}}

\newcommand{\sing}{\mathrm{sing}}

\newcommand{\eps}{\epsilon}
\newcommand{\cG}{\mathcal{G}}

\newcommand{\LT}{\tilde{\Lambda}}

\newcommand{\proofstep}[1]{\textit{#1}}

\newcommand{\om}{\omega}
\renewcommand{\eps}{\varepsilon}
\newcommand{\e}{\varepsilon}

\DeclareMathOperator{\tr}{tr}

	\DeclareMathOperator{\lens}{lens}
	\DeclareMathOperator{\plane}{plane}
	\newcommand{\X}{\mathcal{X}}

	\title[]{Minimizing clusters with prescribed asymptotic geometry}

	\author[R. Neumayer]{Robin Neumayer}
	\address{Department of Mathematical Scienes, Carnegie Mellon University, 5000 Forbes Avenue, Pittsburgh, PA 15213, United States of America }
	\email{neumayer@cmu.edu}
	
	\author[M. Novack]{Michael Novack}
	\address{Department of Applied Mathematics, Illinois Institute of Technology, Chicago, IL 60616, United States of America}
	\email{mnovack@illinoistech.edu}
	
	\author[A. Skorobogatova]{Anna Skorobogatova}
	\address{Institute for Theoretical Studies, ETH Z\"{u}rich, Scheuchzerstrasse 70
8092 Z\"{urich}, Switzerland}
\email{anna.skorobogatova@eth-its.ethz.ch}

\begin{document}

\begin{abstract}
We construct locally minimizing $(1,2)$-clusters 
whose exterior interfaces are asymptotic to various prescribed singular area-minimizing cones. For  $n+1\leq 7$, Bronsard \& Novack characterized all minimizing $(1,2)$-clusters as standard lenses, whose exterior interface is planar. For $n+1 \in [8,2700]$, the authors together with Bronsard showed the existence of a locally minimizing $(1,2)$-cluster whose exterior interface blows down to {\it some} (unknown, possibly non-unique) singular area-minimizing hypercone. For $n+1=8$, this was shown independently by Novaga, Paolini \& Tortorelli.

Here we develop a refined construction using the Hardt-Simon foliation that realizes prescribed cones. For a singular area-minimizing hypercone $C$ that has an isolated singularity or is cylindrical, we show that if $C$ satisfies an explicit energy bound, then there is a locally minimizing $(1,2)$-cluster whose exterior interface is asymptotic to $C$ with quantitative rates. {In fact, {if $C$ is an area minimizing Lawson cone  satisfying this energy bound,} we produce a countably infinite family of distinct locally minimizing clusters asymptotic to $C$, distinguished by their prescribed asymptotic decay to leading order.}

We verify this energy bound for {the generalized Simons cones $C_{k,k}$ in every even ambient dimension $n+1 = 2k+2\geq 8$, and for the cylindrical cone $C_{3,3}\times\mathbb{R}$ in $\mathbb{R}^9$, where $C_{3,3}$ is the Simons cone}, {therefore answering the cone realization problem in these cases}. {This in particular removes the upper bound of 2700 on the ambient dimension when $n+1$ is even in our preceding work}. 
\end{abstract}

\maketitle

\vspace{-3mm}
\section{Introduction}
\subsection{Overview} 
The classical multiple bubble problem asks for least-area partitions of $\R^{n+1}$ into finitely many chambers of prescribed finite volume together with one infinite volume chamber. It has been the subject of extensive work in recent decades \cite{Alm76, JTaylor76, ColEdeSpo22,FoiAlfBroHodZim93, HutMorRitRos02,ReiHeiLaiSpi03,Rei08, Wichiramala,PaTo20, DRTi23,MilXu25, MilNee23}, including the breakthrough work of Milman \& Neeman \cites{MilNee22}. In recent years, this problem has been extended to clusters with multiple infinite-volume chambers, introducing a new feature absent from the classical multiple bubble problem: the interface between the two infinite chambers may have nontrivial asymptotic geometry.

The simplest instance is the $(1,2)$-cluster problem. A $(1,2)$-cluster is a partition $\mathcal{X}=(\mathcal{X}(1),\mathcal{X}(2),\mathcal{X}(3))$ of $\mathbb{R}^{n+1}$ into three sets of locally finite perimeter, called chambers, with $0<|\mathcal{X}(1)|<\infty$ and $|\mathcal{X}(2)|=|\mathcal{X}(3)|=\infty$. A $(1,2)$-cluster $\X$ is  (locally) minimizing if, for every $\rho>0$,
\[
\Pcal(\Xcal;B_\rho(0))\leq \Pcal (\Xcal'; B_\rho(0))
\]
whenever  $\Xcal'= (\mathcal{X}'(1),\mathcal{X}'(2),\mathcal{X}'(3))$ has $\Xcal(i) \Delta \Xcal'(i) \cc B_\rho(0)$ and $|\Xcal(i)| = |\Xcal'(i)|$ for $i=1,2,3$. Here,
\begin{equation}
	\label{e:perim-cluster}
	\Pcal(\X;B_\rho(x)) := \frac{1}{2}\sum_{i=1}^3 P(\X(i); B_\rho(x))\,,
\end{equation}
where $P(\X(i); B_\rho(x))$ is the relative perimeter of $\X(i)$ in $B_\rho(x)$. The $(1,2)$-cluster problem, and the more general $(M,N)$-cluster problem, was first introduced by Alama, Bronsard \& Vriend \cite{AlaBroVri23} in relation to the structure tri-block copolymers in the regime when one phase is infinitesimal in volume relative to the other two. There have since been numerous works concerning the existence, classification and stability of such clusters \cite{AlaBroLuWan22, AlaBroLuWan25, BN,BonCriTop25, MilXu25, BNNS, NovPaoTor25,NovPaoTor23}. 

When $n+1\leq 7$, Bronsard and the second author \cite{BN} showed that  there is a unique locally minimizing $(1,2)$-cluster in $\R^{n+1}$ up to homotheties: the \emph{standard lens cluster}.  See also \cite{AlaBroVri23} when $n=1$. This cluster $\X_{\lens}$ is characterized by the properties $|\X_{\lens}(1)|=1$, $\partial\X_{{\lens}}(2)\cap \partial\X_{{\lens}}(3) \subset \{x_n=0\}$ and $\partial \X_{ {\lens}}(1)$ consists of a pair of spherical caps of equal radii meeting on $\{x_n=0\}$ with equal angles of $\frac{2\pi}{3}$ between the three interfaces.

This uniqueness for $n+1 \leq 7$ is closely related to the fact that planes are the only minimal hypercones in these dimensions. Given the existence of non-planar minimizing hypercones for $n+1\geq 8$ \cite{BDGG}, two natural questions arise. The first raised in was raised in \cite{BN} and \cite{NovPaoTor23}:
\begin{question}\label{q:nonunique}
   For $n+1\geq 8$, do there exist locally minimizing $(1,2)$-clusters besides the standard lens? 
\end{question}
Since the exterior interface $\partial\X(2)\cap\partial\X(3)$ of a locally minimizing $(1,2)$-cluster necessarily blows down to a  (possibly non-unique) area-minimizing hypercone, a closely related question is the {\it cone realization problem}:
\begin{question}\label{q:prescribed-bd}
	Given an area-minimizing hypercone $C$ in $\R^{n+1}$, is there a locally minimizing $(1,2)$-cluster whose exterior interface blows down to $C$?
\end{question}

Our previous work with Bronsard \cite{BNNS} addressed Question~\ref{q:nonunique}. We showed that for $8 \leq n+1 \leq 2700$, uniqueness of locally minimizing $(1,2)$-clusters fails by demonstrating the existence of local minimizers whose exterior interfaces blow down to \emph{singular} area-minimizing hypercones and are thus not lenses. An analogous result was independently and simultaneously proven by Novaga, Paolini \& Tortorelli \cite{NovPaoTor25} for the case $n+1=8$ with a similar proof. 

These constructions, however, do not identify the blow-down cones and thus yield no information about Question~\ref{q:prescribed-bd} for any given area-minimizing cone. Indeed, in these results, nothing could be said about the singular blow-down cones apart from an upper bound on their density. Thus, even after the affirmative answer to Question~\ref{q:nonunique} for $8 \leq n+1 \leq 2700$,  the only area-minimizing cone known to arise as a blow-down of the exterior interface of a minimizing $(1,2)$-cluster was the plane, realized by the standard lens cluster.

Moreover, the blow-down cones corresponding to the  minimizing $(1,2)$-clusters constructed in these works were not shown to be unique, preventing any possibility of establishing rates of convergence or providing any further description of the shape of the minimizing $(1,2)$-clusters. And, until the present work, Question~\ref{q:nonunique} remained open in dimensions $n+1>2700.$

A main goal of this work is to address Question~\ref{q:prescribed-bd} for a large family of  area-minimizing cones, including both cylindrical cones and cones with isolated singularity, and to give a more precise description of the minimizing $(1,2)$-clusters produced by the construction. In doing so, we  also answer Question~\ref{q:nonunique} in the affirmative in every even ambient dimension $n+1\geq 8$.

The clusters we construct have exterior interfaces that converge quantitatively to the prescribed cone $C$ at infinity. In the isolated-singularity case, this asymptotic control also yields smoothness of the exterior interface outside a sufficiently large ball. In fact, for those cones we can treat that have an isolated singularity, we produce countably many distinct minimizing $(1,2)$-clusters, {distinguished by their leading order of decay to $C$}, standing in contrast to the unique minimizing $(1,2)$-cluster that is asymptotic to the plane (namely, the lens).

\subsection{Main results}
For an area-minimizing cone $C$ in $\R^{n+1}$ that either has an isolated singularity or is cylindrical, we reduce the cone realization problem to the validity of a strict inequality for the normalized energy
\begin{equation}\label{e: LambdaC}
\Lambda_C := \inf\left\{P(E) - \mathcal{H}^n(C \cap E^{(1)}) \ : \ \text{$E$ is a bounded set of finite perimeter with $|E|=1$}\right\}\,.
\end{equation}
We write $\Lambda_{\plane}(n+1)$ to denote $\Lambda_{C}$ when $C$ is a hyperplane in $\R^{n+1}$; this is explicitly given by
\[
\Lambda_{\plane}(n+1) = P(\X_{\lens}(1)) -\omega_{n}\rho_{n+1}^{n}
\]
where $\rho_{n+1}$ is the radius of the disc $\X_{\lens}(1) \cap \{x_n=0\}$.
Our first main result provides a positive answer to Question \ref{q:prescribed-bd}, yielding prescribed conical blowdowns with a rate, for cones with isolated singularities and cylindrical cones satisfying $\Lambda_C < \Lambda_{\rm plane}(n+1)$. Recall that a (multiplicity one) cone $C$ is \emph{cylindrical} if, up to rotation, it is of the form $C= C' \times \R^{{m}}$ for a cone $C'$ with an isolated singularity. We refer the reader to Section \ref{s:prelim} for preliminaries about positive Jacobi fields on $C$.
\begin{theorem}\label{thm:main thm}
	If $C\subset \mathbb{R}^{n+1}$ is an area-minimizing hypercone that either has an isolated singularity or is cylindrical, and
	\begin{align}\label{eqn: strict inequ thm 1 statement}
		\Lambda_C < \Lambda_{\rm plane}(n+1)\,,
	\end{align}
	then there exists a locally minimizing $(1,2)$-cluster $\mathcal{X}$ such that the blowdown of $\partial \X(2) \cap \partial \X(3)$ is $C$. Moreover, letting $\gamma_1^\pm$ be the homogeneities of the positive, radially homogeneous, decaying Jacobi fields on $C$, there exist $R_0>0$,  $C_0{>0},$ and $C_1 \geq 0$ 
such that for $r\geq R_0$,
    \begin{equation}\label{e:HS-slow-decay}
        \dist_\Hcal(\partial \X(2) \cap \partial \X(3)\cap B_r^c, C\cap B_r^c) \leq r^{-\gamma_1^-}(C_0 + C_1\log r)\,.
    \end{equation}
    or
    \begin{equation}\label{e:HS-fast-decay}
        \dist_\Hcal(\partial \X(2) \cap \partial \X(3)\cap B_r^c, C\cap B_r^c) \leq C_0 r^{-\gamma_1^+}\,,
    \end{equation}
    where $\dist_\Hcal$ denotes Hausdorff distance.
\end{theorem}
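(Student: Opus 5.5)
The plan is to build $\X$ as a limit of minimizers of a constrained problem in which the outer interface is pinned to a leaf of the Hardt--Simon foliation, and to use the strict inequality \eqref{eqn: strict inequ thm 1 statement} to keep the volume chamber from escaping to infinity. The Hardt--Simon foliation consists of smooth area-minimizing hypersurfaces $S_\lambda = \lambda S_1$ on one side of $C$; for cylindrical $C = C'\times\R^m$ we take $S'_\lambda\times\R^m$. Each leaf is asymptotic to $C$ at rate $r^{-\gamma_1^\pm}$, or at rate $r^{-\gamma_1^-}\log r$ in the degenerate case $\gamma_1^+=\gamma_1^-$. These are exactly the two alternatives \eqref{e:HS-slow-decay} and \eqref{e:HS-fast-decay}.

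\textbf{Step 1: constrained problem.} For $R\to\infty$ we minimize $\Pcal(\X;B_R)$ over $(1,2)$-clusters with $|\X(1)|=1$, where $\X(2)$ and $\X(3)$ coincide outside $B_R$ with the two sides of a fixed leaf $S_{\lambda}$ (or of $C$ itself, translated suitably). Standard compactness gives minimizers $\X_R$. Relative to the comparison surface, the energy of $\X_R$ can be written as $\Pcal(\X_R;B_R)-\Hcal^n(S_\lambda\cap B_R)$.

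\textbf{Step 2: energy comparison.} We need two bounds.
\begin{itemize}
\item \emph{Upper bound.} Take a near-optimal set $E$ for $\Lambda_C$, placed near the vertex region and adapted to the leaf. Calibration by the foliation, since the leaves are minimizing, shows that the energy of $\X_R$ is at most $\Lambda_C+o(1)$.
\item \emph{Lower bound for escaping configurations.} If $\X_R(1)$ drifts to infinity, or concentrates away from the singular set, then locally the interface looks like a plane. In that regime the energy is at least $\Lambda_{\rm plane}(n+1)-o(1)$, by the classification of minimizing $(1,2)$-clusters in low-dimensional blow-ups via dimension reduction. For cylindrical cones, escape along the $\R^m$ directions is handled by translation invariance: we translate back.
\end{itemize}
The strict inequality \eqref{eqn: strict inequ thm 1 statement} then forces $\X_R(1)$ to stay in a fixed ball. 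We also need to rule out the chamber splitting, with part of it escaping. This is done by the same comparison together with strict subadditivity of the lens energy in volume.

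\textbf{Step 3: limit and asymptotics.} We pass to a limit as $R\to\infty$ and obtain a locally minimizing cluster. Outside a large ball, the exterior interface is area-minimizing. It lies between leaves of the foliation, or the leaf is used as a barrier via the maximum principle. By the Hardt--Simon characterization, a minimizing hypersurface on one side of $C$ that is asymptotic to $C$ is a leaf or $C$. A Jacobi-field expansion then gives the decay \eqref{e:HS-slow-decay} or \eqref{e:HS-fast-decay}, and the blowdown is $C$.

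\textbf{Main obstacle.} I expect the hardest part to be the lower bound in Step 2, namely showing that escaping or degenerate configurations cost at least $\Lambda_{\rm plane}$. A secondary difficulty is ensuring the limit interface does not blow down to some other cone. My first attempt at both would be to use the foliation leaves as one-sided barriers for the exterior interface.
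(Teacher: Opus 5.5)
There is a genuine gap in the lower bound of Step 2, and that step carries the whole proof. Far from the origin the \emph{boundary datum} is nearly flat, but nothing in your argument makes the \emph{exterior interface of the minimizer} nearly flat near an escaping piece of $\X_R(1)$. Inside $B_R$ the interface is free, so the limit of an escaping concentration is merely some locally minimizing $(1,2)$-cluster. Its exterior interface may blow down to another singular minimizing cone of density at most $\Theta_C$, which is exactly the uncontrolled outcome of \cite{BNNS}. Such a limit need not cost $\Lambda_{\rm plane}(n+1)$. Dimension reduction is not available here: there is no translation invariance, and $n+1\ge 8$. The classification of \cite{BN} applies only once the blowdown is known to be planar.

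What is missing is the concrete barrier mechanism. (i) A trapping lemma. With boundary leaf $T_\lambda$, suppose $\X(1)$ lies strictly between $F_{\lambda_-}$ and $F_{\lambda_+}$, where $\lambda_-<\lambda<\lambda_+$. Testing minimality against $(\X(1),\X(2)\cup F_{\lambda_+},\X(3)\setminus F_{\lambda_+})$ shows that $F_{\lambda_+}\setminus\X(3)$ is perimeter minimizing. The strong maximum principle and connectedness of $T_{\lambda_+}$ then give $F_{\lambda_+}\subset\X(2)$, and likewise $F_{\lambda_-}\subset\X(3)$. (ii) Apply this lemma to the concentration of \emph{largest leaf index} $\lambda_{k,1}$, meaning the leaf passing at distance $R_0+1$ from its center. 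Then the whole exterior interface lies on one side of $T_{\lambda_{k,1}}$. If $\lambda_{k,1}\to\infty$, these leaves, translated to the concentration, have curvature $O(\lambda_{k,1}^{-1})$ and converge to a hyperplane. The limit cluster therefore lies on one side of a hyperplane, so its blowdown is planar. By \cite{BN} it is a standard lens, or else a floater costing at least $(n+1)\omega_{n+1}^{1/(n+1)}v_1^{n/(n+1)}>\Lambda_{\rm plane}v_1^{n/(n+1)}$. Only that one concentration can be identified this way.

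Consequently, strict subadditivity cannot rule out splitting. The other concentrations are not known to be lenses, and the part that stays is only known to have $\tilde\Lambda\ge 0$. The paper does not rule out splitting at all. It excises the identified lens by hand to get $\tilde\Lambda_{T_\lambda}(v)\ge\Lambda_{\rm plane}v_1^{n/(n+1)}+\tilde\Lambda_{T_\lambda}(v-v_1)$, re-minimizes at the smaller volume, and iterates. Three ingredients force the process to stop at some volume $v\ge v_0$ with bounded leaf index:
\begin{itemize}
\item concavity of $t\mapsto t^{n/(n+1)}$;
\item the choice of $v_0$ with $\Lambda_{\rm plane}(1-v_0)^{n/(n+1)}>\Lambda_{T_\lambda}\ge\tilde\Lambda_{T_\lambda}(1)$;
\item a lower bound $\bar v$ on concentration volumes that is uniform for $v\in[v_0,1]$.
\end{itemize}
That final limit, rescaled, is the desired cluster.

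Step 3 is then immediate. The trapping lemma places the exterior interface between $T_{\pm2\lambda_*}$, so the blowdown $C$ and both decay alternatives follow straight from the asymptotics of these two leaves. Your appeal to a Hardt--Simon characterization would not apply anyway: the exterior interface has boundary on $\partial\X(1)$ and need not lie on one side of $C$.

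Finally, Step 1 needs repair. If perimeter is counted only in the open ball where the constraint is imposed, $\X(1)$ can sit against $\partial B_R$ and leave part of its boundary uncounted; half of a volume-$2$ lens costs only about $2^{-1/(n+1)}\Lambda_{\rm plane}$. The paper avoids this by counting perimeter in $B_{4R}$ and imposing the constraint outside $B_{3R}$. It also adds the confinement term $\Gcal_R$, which keeps $\X(1)\subset B_{C_0\sqrt R}$ so that concentration limits are genuine local minimizers in $\R^{n+1}$.
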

\begin{remark}\label{rmk: decay}
    The decay rates \eqref{e:HS-slow-decay}, \eqref{e:HS-fast-decay} arise from the decay of positive Jacobi fields on a cone $C$ with isolated singularity, which determine the asymptotic behavior at infinity of {the leaves of the Hardt-Simon foliation {(and thus $\partial\X(2)\cap \partial\X(3)$)}; see Section \ref{subsec:iso sing prelim} below}. Note that the decay \eqref{e:HS-slow-decay} occurs when $C$ is strictly minimizing, with $C_1=0$ if and only if $C$ is strictly stable, namely $\lambda_1 > -(\tfrac{n-2}{2})^2$.
\end{remark}
\begin{remark}
When $C$ is a cone with isolated singularity satisfying the assumptions of Theorem~\ref{thm:main thm}, then the decay estimates \eqref{e:HS-slow-decay},\eqref{e:HS-fast-decay} together with De Giorgi's $\e$-regularity guarantee that for some $R_1>0$, the exterior interface $\partial\X(2)\cap \partial\X(3)$ is smooth outside of $B_{R_1}(0)$.
\end{remark}

For any area-minimizing hypercone $C$ that has any isolated singularity or is cylindrical,
Theorem~\ref{thm:main thm} in particular gives an affirmative answer to Question~\ref{q:prescribed-bd} provided $C$ satisfies the  strict inequality 
\begin{equation}\label{e:Lambda-cone-vs-Lambda-lens}
	\Lambda_C < \Lambda_{\plane}(n+1)\,.
\end{equation}
Thus the next major question becomes whether \eqref{e:Lambda-cone-vs-Lambda-lens} holds for given cones.
In our previous work \cite{BNNS}, we verified \eqref{e:Lambda-cone-vs-Lambda-lens} for certain  \emph{quadratic} (or Lawson) cones
\begin{equation}\label{eqn: quadratic cones}
    C_{k,l}=\{(x,y) \in \R^{k+1}\times \R^{l+1} : n{+1}=k+l+2, \ |x|^2 = \tfrac{k}{l} |y|^2\}\,,
\end{equation}
in dimensions $8 \leq n+1 \leq 2700$.
There, the proof in $n+1=8$ is done through explicit by-hand computation, while in higher dimension the proof is computer-assisted via fully rigorous interval arithmetic.
 Our next main result verifies \eqref{e:Lambda-cone-vs-Lambda-lens} for generalized Simons cones in every even dimension $n+1 \geq 8$ and for the cylindrical cone $C_{3,3}\times \R$.

\begin{theorem}\label{t:even-dim-nonuniqueness}
    The {strict inequality \eqref{e:Lambda-cone-vs-Lambda-lens} and thus the} conclusions of Theorem \ref{thm:main thm} hold for $C = C_{k,k}$ when $n{+1}=2k+2$ for any $n{+1} \geq 8$, and for $C_{3,3} \times \R$ in $\R^9$, the latter providing a first example of a locally minimizing $(1,2)$-cluster modeled on a cone with a non-isolated singularity.
\end{theorem}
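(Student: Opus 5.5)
By Theorem~\ref{thm:main thm}, it suffices to verify the strict inequality \eqref{e:Lambda-cone-vs-Lambda-lens} for $C=C_{k,k}\subset\R^{2k+2}$, $k\ge 3$, and for $C_{3,3}\times\R\subset\R^9$. Both cones are area-minimizing: the former is a classical Lawson/Simons cone, and the latter is a product of a minimizing cone with a line. Both are also either isolated-singular or cylindrical. Since $\Lambda_C$ is an infimum, the plan is to exhibit an explicit competitor $E$ with $|E|=1$ and
\[
P(E)-\mathcal{H}^n(C\cap E^{(1)})<\Lambda_{\plane}(n+1).
\]
The natural competitor is a ball centered at the vertex. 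For $E=B_r$ with $\omega_{n+1}r^{n+1}=1$, we get
\[
P(B_r)-\mathcal{H}^n(C\cap B_r)=(n+1)\omega_{n+1}r^n-\Theta_C\,\omega_n r^n,
\]
where $\Theta_C$ is the density of $C$. The right-hand side $\Lambda_{\plane}$ is explicit, being given by the lens computation: two spherical caps meeting at $120^\circ$, namely $P(\X_{\lens}(1))-\omega_n\rho^n$. For $C_{3,3}\times\R$, the density is that of $C_{3,3}$, so we compute in $\R^9$ using $\Theta_{C_{3,3}}$.

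\textbf{Step 1 (asymptotics).} Express everything via Gamma functions and compare as $n\to\infty$. The density of $C_{k,k}$ is
\[
\Theta_{C_{k,k}}=\frac{\mathcal{H}^{n-1}(S^k(1/\sqrt2)\times S^k(1/\sqrt2))}{\mathcal{H}^{n-1}(S^{n-1})},
\]
which is explicit, and by Stirling it tends to $\sqrt{2}$ (the Gaussian density of the cone approaches that of a sphere product, which scales like $\sqrt{2}$ relative to the plane). The lens energy, normalized by $(n+1)\omega_{n+1}r^n$, has an expansion in $1/n$. In high dimensions the lens is a thin lens whose cap area and disc area nearly cancel, so $\Lambda_{\plane}$ is of the form $P(\text{ball})(1-c\,n^{-1/2}+\dots)$ or similar. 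A centered ball gains a saving proportional to $\Theta_C\,\omega_n/\big((n+1)\omega_{n+1}\big)\sim \Theta_C/\sqrt{2\pi n}$.

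\textbf{Step 2 (refined competitor).} If the centered ball is not sufficient, we would instead use a ``lens adapted to $C$.'' Take the region bounded by two spherical caps of equal radii meeting the cone's Hardt--Simon structure, or more simply the region between two spheres symmetric under $x\leftrightarrow y$. We then optimize over one or two parameters and compare the result to the plane lens with the same parametrization. Since $C_{k,k}$ has density strictly larger than that of the plane, a lens-shaped competitor intersecting $C$ should beat the planar lens, provided the extra cone area in $E$ outweighs the loss in shape.

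\textbf{Step 3 (uniform inequality).} We need a rigorous inequality valid for all even $n+1\ge 8$. For this we would bound the ratios of Gamma functions via Wendel/Gautschi-type inequalities, such as
\[
\sqrt{x}\le \frac{\Gamma(x+1)}{\Gamma(x+1/2)}\le\sqrt{x+1/2}.
\]
These give explicit monotone bounds, reducing the claim to an elementary inequality that is checked by hand for $n+1\ge N_0$. The finitely many remaining dimensions, together with the $\R^9$ cylindrical case, are checked by direct numerical evaluation with error bounds; the case $8\le n+1\le 2700$ is in any case covered by \cite{BNNS}. We expect the main obstacle to be Step 3: the margin between the two sides is a lower-order term in $n$, and obtaining clean two-sided Gamma bounds sharp enough to preserve the strict inequality uniformly will be delicate.
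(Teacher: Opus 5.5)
Your proposal has a genuine gap: the competitor you rely on never beats the lens, and the fallback in Step 2 is not specified.

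\textbf{The centered ball fails in every dimension.} The unit-volume ball centered at the vertex (or on the spine) gives a worse energy than the lens, so Step 1 cannot work for any $m$. Write $m=n+1$. The ball gives
\[
\frac{P(B)-\Hcal^n(C\cap B)}{P(B)}=1-\Theta_C\,\frac{\omega_{m-1}}{m\,\omega_m}=1-\frac{1+o(1)}{\sqrt{\pi m}}\qquad (C=C_{k,k}).
\]
By contrast, $\Lambda_{\plane}(m)=m\,\omega_{m-1}^{1/m}P_m^{1/m}$ with $P_m=2\int_0^{\pi/3}\sin^m\theta\,d\theta$. Since $P_m^{1/m}\to\sqrt3/2$ and $(\omega_{m-1}/\omega_m)^{1/m}\to 1$, this gives $\Lambda_{\plane}(m)/P(B)\to\sqrt3/2$. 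So the lens saves a fixed fraction of the ball's perimeter, not $O(n^{-1/2})$ as you guessed.

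Low dimensions are no better. For $C_{3,3}\subset\R^8$ the ball gives about $7.49$, against $\Lambda_{\plane}(8)\approx 7.29$. For $C_{3,3}\times\R$, a ball centered on the spine gives about $8.21$, against $\Lambda_{\plane}(9)\approx 7.94$. The ball meets $C$ orthogonally rather than at the $120^\circ$ junction angles, and that is what costs it.

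Citing \cite{BNNS} for $8\le n+1\le 2700$ is legitimate, since that is a statement about $\Lambda_C$. But this leaves exactly the cases $n+1>2700$ and $C_{3,3}\times\R$, and for those you have no working competitor.

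\textbf{Step 2 does not fill the gap.} The claim that a lens-shaped competitor ``should'' beat the planar lens because $\Theta_C>1$ is the statement to be proved, not an argument. The ``region between two spheres symmetric under $x\leftrightarrow y$'' is also not the right object.

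The competitor that works is the $SO(k+1)\times SO(k+1)$-invariant set from \cite{BNNS}. Its profile in the $(|x|,|y|)$-quadrant is a pair of circular arcs, centered at $(0,-h)$ and $(-h,0)$ with $h=1+\sqrt3$. These arcs meet the diagonal at angle $\pi/3$ and the axes orthogonally; they are not round spheres in $\R^{2k+2}$. Its normalized energy $D_m$ is a ratio of one-variable integrals.

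Even for this optimal-angle competitor the margin is tiny:
\[
\log\bigl(\Lambda_{\plane}(m)/D_m\bigr)=(1-\log\sqrt2)/m+O(m^{-2}),
\]
after exact cancellation of the $\log m$, constant, and $\log m/m$ terms. Proving this for all even $m\ge 2000$ requires endpoint Laplace-method bounds with explicit remainders for $P_m$, $M_m$ and $L_m$, for example
\[
M_m\le\frac{3}{2m}\Bigl(1-\frac{4.1}{m}\Bigr),\qquad 1+mL_m\ge\sqrt3\Bigl[1-\frac{6.03-4/\sqrt3}{m}\Bigr].
\]
Gamma/Gautschi inequalities, your Step 3 tool, only handle the factor $\omega_{m-1}/\omega_{m/2}^2$.

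For $C_{3,3}\times\R$ you need a bounded competitor adapted to the spine. One choice is the warped product $\{(w,t): w\in H(t)E\}$, with $H(t)=(1-|t|/3)_+$, built over the $C_{3,3}$ competitor $E$. Its energy, computed via the coarea formula, is about $7.795<7.937$.
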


Observe that Theorem~\ref{thm:main thm2} gives a positive answer to Question~\ref{q:nonunique} in every even ambient dimension $n+1\geq 8.$
\begin{remark}\label{rmk: lawson}
    We believe that the conclusion of Theorem \ref{t:even-dim-nonuniqueness} should remain true for the quadratic cones $C_{k,l}$ with {$l=k+1$} in odd dimensions $n+1 {= k+l+2} \geq 9$, using the competitors constructed in \cite{BNNS} and the method of proof of Theorem~\ref{t:even-dim-nonuniqueness}. Since our primary goal is to demonstrate the method {that allows us to remove the unnecessary upper bound on ambient dimension in our previous argument}, we do not pursue this here.
\end{remark}

Our final main result produces an abundance of distinct minimizing $(1,2)$-clusters that are asymptotic to {area-minimizing Lawson cones}. This includes one minimizer whose exterior interface decays to $C$ at a {\it  strictly faster} rate than the rate guaranteed by \eqref{e:HS-slow-decay} in Theorem~\ref{thm:main thm}, and countably many clusters with precisely the decay \eqref{e:HS-slow-decay} with distinct leading-order coefficients. This flexibility is very much in contrast to the symmetry and rigidity properties exhibited by local minimizers both in lower dimensions for this problem, and in the classical multiple bubble problem. 

\begin{theorem}\label{thm:main thm2}
	Suppose $C_{k,l}\subset \mathbb{R}^{n+1}$ {is an area-minimizing Lawson cone} satisfying
	\begin{align*}
		\Lambda_{C_{k,l}} < \Lambda_{\rm plane}(n+1)
	\end{align*}
Then there are infinitely many distinct minimizing $(1,2)$-clusters $\X$ with $|\X(1)|=1$ that are asymptotic to ${C_{k,l}}$. 

More precisely,  there is a minimizing $(1,2)$-cluster $\X$ with $|\X|=1$ with 
\begin{equation} 
{\rm dist}_{\mathcal{H}}(\partial \X(2)\cap \partial \X(3) \cap  B_{r}^c, C_{k,l} \cap B_r^c) = 
 {\rm o}(r^{-\gamma_1^-})
\end{equation}
and a countable collection of non-zero numbers $\{a_i\}\subset\R$, accumulating to $0$, and corresponding 
local minimizers $\X_i$ with $|\X_i(1)|=1$
such that
\begin{equation}\label{eq:prescribed decay rate in theorem}
{\rm dist}_{\mathcal{H}}(\partial \X_i(2)\cap \partial \X_i(3) \cap  B_{r}^c, {C_{k,l}} \cap B_r^c) =
   a_i|x|^{-\gamma_1^-} + {\rm o}(r^{-\gamma_1^-})
\end{equation}
\end{theorem}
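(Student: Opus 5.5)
We would prove Theorem~\ref{thm:main thm2} by rerunning the construction behind Theorem~\ref{thm:main thm}, but using a whole one-parameter family of exterior boundary data instead of a single leaf. The key extra input is that, for a Lawson cone $C=C_{k,l}$, the minimal surfaces asymptotic to $C$ form a rich family. Every area-minimizing Lawson cone is strictly minimizing, and $C_{k,l}$ carries $O(k+1)\times O(l+1)$ symmetry. Using this symmetry, we can construct minimal hypersurfaces near $C$ with more freedom than the Hardt--Simon leaves alone. These are graphs over $C$ outside a large ball with expansion $a r^{-\gamma_1^-}+O(r^{-\gamma_1^+})$ (or $o(r^{-\gamma_1^-})$). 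The idea is to pin the interface at infinity to prescribed asymptotic data, with leading coefficient $a$ either $0$ or a chosen value, and then minimize.

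\textbf{Step 1: constrained minimization.} For each admissible coefficient $a$, we minimize the cluster energy among $(1,2)$-clusters $\X$ with $|\X(1)|=1$ whose exterior interface agrees with a fixed reference surface $\Sigma_a$ outside $B_R$, and then let $R\to\infty$. Concretely, we use the Hardt--Simon leaf $S_a$ on the side dictated by the sign of $a$, rescaled so that its leading coefficient equals $a$, and take $a=0$ to be $C$ itself. The normalized energy relative to $\Sigma_a$ is bounded above by $\Lambda_C$, via competitors built as in the proof of Theorem~\ref{thm:main thm}. The strict inequality $\Lambda_C<\Lambda_{\rm plane}(n+1)$ then prevents the following failure: the bubble $\X(1)$ escaping to infinity, or detaching to a region where the interface looks planar. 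This lets us pass to a locally minimizing limit cluster. The bubble stays in a bounded region, so outside a large ball the exterior interface is a minimizing boundary lying between two leaves of the foliation.

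\textbf{Step 2: asymptotics.} Outside a large ball the exterior interface is area minimizing and trapped between leaves. By the Hardt--Simon uniqueness and foliation theory, together with the Jacobi field expansion recalled in Section~\ref{s:prelim}, it is then a graph over $C$ with expansion $b\,r^{-\gamma_1^-}+o(r^{-\gamma_1^-})$. Since $C$ is strictly minimizing, there is no $\log r$ term.

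\textbf{Step 3: infinitely many distinct clusters.} The issue is whether $b$ is really the prescribed $a$, since the local perturbation caused by the bubble could shift the coefficient. We would handle this with a degree/continuity argument on the map $a\mapsto b(a)$. If the minimizers are not unique, we instead use monotonicity in $a$: the foliation ordering together with the maximum principle gives ordering of minimizers. This would show that the set of realized coefficients is infinite and accumulates only at $0$, producing the sequence $\{a_i\}$. The $o(r^{-\gamma_1^-})$ cluster is obtained as the limit $a_i\to 0$, using compactness of locally minimizing clusters; the decay comes from trapping the interface between leaves whose coefficients tend to $0$.

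\textbf{Main obstacle.} The hardest step is showing that the leading coefficient is actually controlled by the boundary data and is not altered by the interior bubble, while also showing the resulting clusters are genuinely distinct. We would first try a barrier argument. Sandwich the exterior interface between two rescaled leaves $\lambda_\pm S$ whose leading coefficients are $a\pm\delta$. The bubble lies in a compact set, and minimizing boundaries satisfy the strong maximum principle, so they cannot cross the leaves outside it. Letting $\delta\to 0$ along the ordering would then pin $b=a$, at least for a countable dense-type set of $a$.
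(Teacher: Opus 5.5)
\textbf{The gap: pinning the leading coefficient.} Most of your setup matches the paper. You run the construction of Theorem~\ref{thm:main thm} with exterior data a Hardt--Simon leaf $T_\lambda$, with $a=c_0\lambda$ and $\lambda=0$ giving $C$. You obtain a limit cluster whose bubble lies in a bounded set and whose interface lies between two leaves, and you expand that interface as $\tilde a\,\phi_1 r^{-\gamma_1^-}+o(r^{-\gamma_1^-})$. A few corrections along the way:
\begin{itemize}
\item The paper gets this expansion from Simon--Solomon, not from Hardt--Simon uniqueness, because the interface need not lie on one side of $C$.
\item The absence of a $\log r$ term comes from strict stability, not strict minimality.
\item The strict inequality does not prevent partial escape of mass as lenses. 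The procedure excises them and stops at some volume $v\in[\bar v,1]$, so a final rescaling is needed, which multiplies the coefficient by a uniformly bounded factor.
\end{itemize}
The real problem is the step you flag yourself, proving $\tilde a=a$. None of your three suggestions achieves it.

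\textbf{Why the barrier argument fails.} Lemma~\ref{lem: trapping}, like any maximum-principle comparison, only uses leaves that avoid $\overline{\X(1)}$. Since $\X(1)$ has volume at least $\bar v$ and sits on the exterior interface, it meets every leaf $T_\mu$ for $\mu$ in an interval of definite length. So leaves with coefficients $a\pm\delta$, $\delta\to0$, are not admissible barriers. You only confine $\tilde a$ to a fixed interval; the paper obtains exactly this, trapping between $T_{\pm2\lambda_*}$.

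\textbf{What actually has to be ruled out.} At finite $R_k$ the coefficient is trivially $a$, since the interface equals $T_\lambda$ outside $B_{3R_k}$. The danger is a transition between coefficients on scales $S_k\to\infty$ with $S_k/R_k\to0$, which neither local barriers nor local convergence can see. For the same reason, your $o(r^{-\gamma_1^-})$ cluster, obtained as a limit of clusters with $a_i\to0$, is not justified. The degree and monotonicity ideas are also unsupported: $a\mapsto b(a)$ is not well defined given non-uniqueness and subsequential limits, and there is no ordering principle for clusters with a third chamber sitting on the interface.

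\textbf{The missing idea is energetic.} Lemma~\ref{l:area-gap-Plateau-leaf 2} shows the following: any surface going from data $\tilde a r^{-\gamma_1^-}\phi_1$ on $\partial B_S$ to $T_\lambda$ on $\partial B_R$, with $R/S\to\infty$, has area exceeding that of $T_\lambda$ by $\tfrac{\beta}{2}(a-\tilde a)^2S^\beta+o(S^\beta)$. Here $\beta=\gamma_1^+-\gamma_1^->0$ by strict stability. The proof uses the pure first-mode asymptotics of the leaves of a Lawson cone ($\phi_1$ is constant by symmetry) and an explicit Jacobi-field computation on the annulus. If $\tilde a\neq a$, this forces $\LT_{T_\lambda}(v)=+\infty$, contradicting the a priori bound $\LT_{T_\lambda}(v)\le P(B^v)$.

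\textbf{Distinctness then follows easily.} Once $\tilde a=a$ holds for every small $\lambda$, including $\lambda=0$ (which directly gives the $o(r^{-\gamma_1^-})$ cluster), distinctness is a plain scaling argument. Volumes in $[\bar v,1]$ make the rescaling factors bounded above and below, so the unit-volume clusters have nonzero leading coefficients accumulating at $0$. No degree theory or ordering is needed.
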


{In view of Theorem~\ref{t:even-dim-nonuniqueness} and our earlier work \cite{BNNS}, Theorem \ref{thm:main thm2} applies to all generalized Simons cones $C_{k,k}$ for $2k+2=n+1\geq 8$ and to all quadratic cones $C_{k,k+1}$ for $2k+3=n+1 \in \{9,\dots, 2699\}$.}

\subsection{Outline of proofs}

The starting point for the proofs of Theorems~\ref{thm:main thm} and \ref{thm:main thm2} is a refinement of the construction we introduced in \cite{BNNS}. The new idea is to use the leaves of the Hardt-Simon foliation of the prescribed cone as barriers, forcing the minimizing $(1,2)$-clusters we construct to have the prescribed asymptotic behavior. 

Let $C$ be a fixed singular area-minimizing hypercone in $\R^{n+1}$ that has isolated singularity (the cylindrical case is similar), and satisfies \eqref{e:Lambda-cone-vs-Lambda-lens}. For a fixed large scale $R>0$, we use a local variational problem to produce a $(1,2)$-cluster $\X_R$ with $|\X_R(1)|=1$ that {locally minimizes the cluster perimeter $\mathcal{P}$ (up to a small error)} among volume-constrained competitors in $B_{3R}$ and whose exterior interface coincides with $C$ outside $B_{4R}$. For a sequence of such scales $R_k \nearrow +\infty,$ we let $\X_k=\X_{R_k}$ be a corresponding sequence of such minimizers. 

One would like to consider the limit of the $\X_k$, but compactness issues arise as the finite-volume chamber $\X_k(1)$ could lose mass in the limit. A concentration-compactness argument shows that, up to a subsequence,  $\X_k$ splits into finitely many (independent of $k$) asymptotically non-interacting concentrations $\{\X_{k,i}\}_{i=1}^N$. After translating by some vector $x_{k,i}$, each concentration converges to a locally minimizing $(1,2)$-cluster $\X_{\infty,i}$ in $\R^{n+1}$ whose finite-volume chamber $\X_{\infty,i}(1)$ has a definite amount mass.

At this stage, there is in principle enormous flexibility for the geometry of the exterior interface for each limiting concentration $\X_{\infty,i}$. A blow-down of the exterior interface of $\X_{\infty, i}$ could be
a plane, e.g. if the concentration escapes along an asymptotically flat region and resembles a standard lens. It could be the prescribed cone $C$, e.g. if 
if the corresponding translations $x_{k,i}$ remain bounded. Or it could any other area-miniminzing hypercone with density at most that of $C$, e.g. 
if the translations $x_{k,i}$ tend to infinity but $|x_{k,i}|/R_k\to 0$ and the exterior interface slowly changes geometry at intermediate scales.\footnote{
In our prior work \cite{BNNS}, at this point in the  construction, we ruled out the possibility that  {\it every} concentration has a planar blow-down  by using \eqref{e:Lambda-cone-vs-Lambda-lens}, the characterization of minimizing $(1,2)$-clusters with planar area growth  \cite{BN}, and the concavity of the scaling. Hence there was at least one concentration blowing down to some singular cone. While this affirmatively answered Question~\ref{q:nonunique} in any dimension for which \eqref{e:Lambda-cone-vs-Lambda-lens} holds for some cone, little can be said about the geometry of the resulting minimizing $(1,2)$-cluster.
}

In order to prove Theorem~\ref{thm:main thm}, let $T_\lambda$ be the Hardt-Simon foliation associated to $C$; see section~\ref{s:prelim}. By construction, for each $k\in \mathbb{N}$, the finite-volume chamber $\X_k(1)$ and the exterior interface $\partial\X_k(2)\cap \partial \X_k(3)$ of $\X_k$ are trapped between two leaves $T_{\pm\lambda}$ of the foliation for sufficiently large $\lambda>0$, so let $\lambda_k$ be the smallest $\lambda>0$ for which this is true. If $\lambda_*:=\liminf \lambda_k$ is finite, this means that (up to a subsequence), the finite-volume chambers $\X_k(1)$ are trapped between the leaves $T_{\pm2\lambda_*}$ for all $k \in \mathbb{N}$. Together with uniform density estimates for $\X_k(1)$, 
this prevents loss of mass and gives compactness of the full sequence. The limiting $(1,2)$-cluster then satisfies the conclusions of Theorem~\ref{thm:main thm}.

On the other hand, if $\lambda_*=+\infty$, then the sequence is not  trapped between a fixed pair of leaves of the foliation and instead some concentration $\X_{k,i}$ has center $x_{k,i}$ drifting off to infinity. But, this means the sequence of translated nearest leaves $T_{\lambda_k}-x_{k,i}$  converges locally smoothly to a plane. The resulting limiting cluster $\X_{\infty, i}$ has exterior interface lying to one side of a plane, and hence, by the maximum principle and the characterization in \cite{BN}, is a standard lens cluster (and the mass $v_1$ of its finite-volume chamber has a definite lower bound). Going back to sequence $\X_{k}$, we excise this lens-like piece by hand to obtain competitors $\X_k'$ for a family of minimization problems as above but with $|\X_k'(v)|=1-v_1$. This reduction can be repeated. Each time $\lambda_*=+\infty,$ we extract and remove a definite amount of mass escaping as an asymptotic lens. Since the extracted volumes have a uniform positive lower bound, the process terminates after finitely many steps. The assumption \eqref{e:Lambda-cone-vs-Lambda-lens} and the concavity of the scaling ensure that some definite amount mass remains after the lens-like pieces are removed; this final sequence must have $\lambda_*<\infty$, allowing us to conclude Theorem~\ref{thm:main thm} as above.

Theorem~\ref{thm:main thm2} is based on a similar construction, but the initial family of variational problems is modeled on a fixed leaf $T_\lambda$ of the Hardt-Simon foliation of the cone $C$ rather than on $C$ itself. Then, a competitor argument and the fact that {it is too expensive to transition from the existing leaf $T_\lambda$ to any other leaf in the foliation (including the cone $C_{k,l}=T_0$ itself) forces the exterior interface to behave exactly as $T_\lambda$ to lowest order; see Lemma \ref{l:area-gap-Plateau-leaf 2}. Here we are relying heavily on the work \cite{SimonSolomon}, which allows us to characterize the asymptotic behavior of the interface $\partial\X(2)\cap\partial\X(3)$ as that of a leaf at lowest order.} 

Finally, the proofs of both parts of Theorem~\ref{t:even-dim-nonuniqueness} build on competitor constructions from our previous paper with Bronsard \cite{BNNS}. There, we constructed competitors $E_C$ for the variational problem $\Lambda_C$ defined in \eqref{e: LambdaC} for generalized Simons cones $C=C_{k,k}$ in $\R^{n+1}$ for even ambient dimensions $n+1=2k+2$ (and more generally for Lawson cones in all dimensions). In order to show \eqref{e:Lambda-cone-vs-Lambda-lens} for these generalized Simons cones $C$ (and thus the first part of Theorem~\ref{t:even-dim-nonuniqueness}, it suffices to show $\mathcal{E}(E_C)<\Lambda_{\rm plane} (n+1)$ where we let  $\mathcal{E}(E)=P(E) - \mathcal{H}^n(C \cap E^{(1)})$. Since $\mathcal{E}(E_C)$ can be expressed explicitly as sums, products, and quotients of dimensionally-dependent integrals of single variable functions, we are able to carefully estimate $\mathcal{E}(E_C)$  using Laplace's method with quantitative error terms to establish the inequality analytically. To prove the second part of Theorem~\ref{t:even-dim-nonuniqueness}, that is, that \eqref{e:Lambda-cone-vs-Lambda-lens} holds for $C=C_{3,3}\times \R,$ we construct a competitor $E_C$ for $\Lambda_C$ as a warped product of the competitor $E_{C_{3,3}}$ from our previous paper and compute $\mathcal{E}(E_C)$ using the coarea formula. 

In both parts of Theorem~\ref{t:even-dim-nonuniqueness}, we expect the methods here could be generalized to other cones, e.g. to other Lawson cones (c.f. Remark~\ref{rmk: lawson}) or cylinders over other generalized Simons cones. In view of Theorem~\ref{thm:main thm}, doing so would solve the cone realization problem, Question~\ref{q:prescribed-bd}, in these cases. We leave these generalizations as an interesting open problem.

\medskip 

\noindent{\bf Acknowledgements.} R.N. is grateful for the support of NSF CAREER grant DMS-2340195 and NSF RTG grant DMS-2342349. A.S. is grateful for the generous support of Dr.~ Max R\"ossler, the Walter Haefner Foundation and the ETH Z\"urich Foundation. This research was partially conducted during the period A.S. served as a Clay Research Fellow. {ChatGPT was used to produce initial versions of estimates in the proofs of Theorems 1.6 and Lemma 5.1. The final versions have been written by the authors and no AI-generated text appears in the article.}

\section{Notation and Preliminaries}\label{s:prelim}
We use the notation $P(E)$ to denote the perimeter of a set $E\subset \R^{n+1}$ of finite perimeter, and $P(E; U)$ to denote the relative perimeter of a set $E$ of locally finite perimeter in an open set $U \subset \R^{n+1}$. We additionally use the notation $\partial^* E$ for the reduced boundary of such a set, {and we will always assume that $E$ satisfies $\overline{\partial^* E} = \partial E$; see \cite[Proposition 12.19]{MaggiBook}.} We point the reader to \cite{MaggiBook} for all relevant background on sets of finite perimeter.

Let $C$ be an area-minimizing hypercone in $\R^{n+1}$. By this we mean that $C=\partial E$ for a conical set of locally finite perimeter $E\subset\R^{n+1}$ with $P(E;U) \leq P(F;U)$ for any open, bounded set $U\subset \R^n$ and any set $F\subset \R^{n+1}$ with $E\Delta F \Subset U$. By conical, we mean that $\iota_{0,r}(E) = E$ for the map $\iota_{0,r}(y) := \frac{y}{r}$.\footnote{Recall, thanks to identification between codimension one boundaryless integral currents and boundaries of sets of locally finite perimeter in $\R^{n+1}$, that this is equivalent to $C$ being an area-minimizing hypercone in the framework of integral currents.}

{Recall that we have monotonicity of surface area ratios $r \mapsto \frac{\Hcal^{n}(\Sigma)}{r^{n}}$ for any area-minimizing hypersurface $\Sigma$ in $\R^{n+1}$. This in particular implies that if $\Sigma$ is in addition entire, any subsequential limit\footnote{{This is made sense of weakly as currents, or equivalently, boundaries of sets of locally finite perimeter.}}
\[
    C= \lim_{j \to \infty} \frac{\Sigma\cap B_{\rho_j}}{\rho_j}\,, \rho_j \uparrow \infty
\]
is an area-minimizing hypercone satisfying
	\begin{equation}
		\label{eqn: density at infinity}
		\frac{\Hcal^n(C\cap B_\rho)}{\omega_n \rho^n} = \Theta_\Sigma := \lim_{R\uparrow \infty} \frac{\Hcal^n(\Sigma \cap B_R)}{\omega_n R^n} \qquad \forall \rho > 0\,.
	\end{equation}   
Thanks to an exterior version (see e.g. \cite{BNNS}*{Lemma A.1}) of the classical monotonicity formula for area-minimizing hypersurfaces, any blowdown
\[
    K = \lim_{j \to \infty} \frac{\X(2)\cap B_{\rho_j}}{\rho_j}\,, \qquad \rho_j \uparrow \infty
\]
of a minimizing $(1,2)$-cluster $\X$ yields a set of locally finite perimeter $K$ such that $\partial K$ is an area-minimizing hypercone.}

\subsection{{Cones with isolated singularities}}\label{subsec:iso sing prelim}

{In this subsection $C$ has an isolated singularity.} Recall that Hardt \& Simon proved the following.
\begin{theorem}[\cite{HardtSimon84}*{Theorem 2.1}]\label{t:Hardt-Simon}
	Let $C$ be an area-minimizing hypercone with $\sing(C) =\{0\}$ in $\R^{n+1}$ and let $K_+, K_-$ be connected components of $\R^{n+1}\setminus \overline{C}$, so that $K_{\pm}$ are local perimeter minimizers. There exists a unique {open} set $F_{\pm} \subset K_\pm$ such that $F_{\pm}$ is a local perimeter minimizer, $\partial F_{\pm}$ is analytic, asymptotic to $C$ at infinity, and $\dist(\partial F_\pm, \{0\})= 1$.
	
	In particular, $\{\lambda \, \partial F_\pm\}_{\lambda>0}$ yields a (unique) foliation of $K_\pm$ by analytic area-minimizing hypersurfaces.
\end{theorem}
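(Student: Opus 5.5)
The plan is to construct $F_+$ by solving obstacle Plateau problems inside $K_+$ on growing balls and passing to the limit (the case of $K_-$ is symmetric). One then shows, in turn, that the limit is asymptotic to $C$, that it is smooth, that its dilates are nested, and that it is unique.

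\emph{Construction.} Fix the closed obstacle $\overline{B_1}$. For each $R>2$, let $F_R$ minimize $P(F;B_{R+1})$ among sets $F$ satisfying:
\begin{itemize}
\item $F\subset K_+\setminus \overline{B_1}$;
\item $F\setminus B_R=K_+\setminus B_R$.
\end{itemize}
Existence follows by lower semicontinuity. Away from the obstacle and from $\partial B_R$, $\partial F_R$ is a one-sided perimeter minimizer in $K_+$. Since $K_+$ is itself minimizing, the strong maximum principle for minimizing boundaries with singular sets (Simon, Solomon--White) shows that $\partial F_R$ cannot touch $C\setminus\{0\}$ from inside. Hence $\partial F_R\cap B_R$ is a genuine minimizer in $B_R\setminus\overline{B_1}$. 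It also touches $\partial B_1$, since otherwise one could push it toward $C$ and lower the area; here I would use that $C$ is minimizing, so the comparison is nondegenerate.

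By compactness of perimeter minimizers, $F_R\to F$ locally as $R\to\infty$. The touching with $\partial B_1$ persists, and $\partial F\neq\emptyset$, $\partial F\neq C$. Away from $\partial B_1$, $\partial F$ is area minimizing. A first-variation argument (the obstacle only pushes outward) then shows that $\partial F$ is minimizing everywhere in $K_+$. After rescaling we may assume $\dist(\partial F,0)=1$.

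\emph{Asymptotics.} Any blow-down of $\partial F$ is a minimizing cone contained in $\overline{K_+}$ with density at most $\Theta_C$. By the strong maximum principle it must equal $C$. Since $C$ is smooth away from $0$, Allard's theorem and Simon's asymptotic analysis show that $\partial F$ is, outside a large ball, a normal graph over $C$ of a function decaying like a Jacobi field $r^{-\gamma}$ with $\gamma>0$. In particular $\partial F$ is smooth near infinity and asymptotic to $C$.

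\emph{Nesting, regularity and foliation.} The key claim is that $\lambda F\subset F$ for $\lambda>1$. To prove it, compare $F$ and $\lambda F$ by cut-and-paste: $F\cap\lambda F$ and $F\cup\lambda F$ are again minimizers with the same asymptotics. Together with the strong maximum principle, this forces the boundaries to be disjoint or equal, and equality is excluded by the normalization $\dist(\partial F,0)=1$. Consequently $x\cdot\nu\ge 0$ on $\operatorname{reg}\partial F$, and in fact $x\cdot\nu>0$ there by the maximum principle, since $x\cdot\nu$ is a Jacobi field.

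Now suppose $p\in\sing\partial F$, with tangent cone $C_p$. Since $(x-p)\cdot\nu=0$ on $C_p$, the identity $x\cdot\nu=(x-p)\cdot\nu+p\cdot\nu$ gives $p\cdot\nu\ge0$ on $C_p$. A minimal cone whose normal has a one-signed component along a fixed vector $p$ must, by the maximum principle applied to translates, be invariant under translation in the direction $p$. Iterating this splitting through dimension reduction forces $C_p$ to be planar, contradicting $p\in\sing\partial F$. Hence $\partial F$ is smooth, and analytic by elliptic regularity. The nested family $\{\lambda\partial F\}_{\lambda>0}$ sweeps out $K_+$: it is monotone, and it tends to $C$ as $\lambda\to0$ and escapes to infinity as $\lambda\to\infty$. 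It is therefore a foliation of $K_+$.

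\emph{Uniqueness.} Let $F'$ be another set with the stated properties. Choose the extremal $\lambda$ such that $\lambda F'\subset F$. Either the boundaries touch at a finite point, in which case they coincide by the maximum principle, or they touch at infinity. The latter case must be ruled out by comparing the leading-order Jacobi field asymptotics $c\,r^{-\gamma}\phi_1$: the coefficients scale like $\lambda^{1+\gamma}$, so the extremal $\lambda$ would make the leading coefficients equal, after which the next-order terms or a Hopf-type boundary argument at infinity give a contradiction. I expect this comparison at infinity to be the main obstacle, since it requires quantitative knowledge that all such minimizers in $K_+$ have the same leading homogeneity, strictly faster than $C$ itself. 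I would handle it with Simon's asymptotic expansion together with the positivity of the first eigenfunction on the link.
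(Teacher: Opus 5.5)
The paper only quotes this result from \cite{HardtSimon84}. Your proposal has a genuine gap at its base: the claim that the limit $F$ of your obstacle problems is minimizing across the contact set $\partial F\cap\partial B_1$. The constraint admits only competitors with $F'\cap B_1=\emptyset$. At contact points this gives a one-sided variational inequality (a sign on the mean curvature), not stationarity; where the contact set has interior, $\partial F$ is a piece of the unit sphere with $H=n$. ``The obstacle only pushes outward'' is exactly this one-sided inequality and does not make the force vanish.

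The force genuinely does not vanish for your problems. Your argument never uses that $0$ is singular, so test it on a hyperplane $C\subset\R^{n+1}$, $n\ge3$. Take an inverted catenoidal end: a positive minimal graph over $\{|x'|>\rho_0\}$, decaying like $|x'|^{2-n}$, with height $>1$ at its vertical edge. Sliding it down onto $\partial F_R$ shows $\partial F_R$ stays below it uniformly in $R$. So $\partial F$ decays to $C$ while touching $\partial B_1$, yet a minimizing boundary in a half-space must be a plane. For strictly stable, strictly minimizing singular cones (e.g.\ the Lawson cones here) the mechanism is the same. Zero Dirichlet data on $C\cap\partial B_R$ forces the coefficient of the slow mode $\phi_1 r^{-\gamma_1^-}$ in the linearized far field to be $O(R^{-(\gamma_1^+-\gamma_1^-)})$, so your limit decays like $r^{-\gamma_1^+}$. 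The leaves decay like $r^{-\gamma_1^-}$ by \eqref{e:strictly-min-strictly-stable}, so $\partial F$ is not a leaf, hence (by the uniqueness part) not minimizing. The nesting, the Jacobi field $x\cdot\nu$ and uniqueness all rely on unconstrained minimality, so everything downstream falls.

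The fix is to replace the obstacle by normalization through scaling:
\begin{itemize}
\item Minimize \emph{without} constraint, with boundary a push-off $\Gamma_R\subset K_+\cap\partial B_R$ of $C\cap\partial B_R$ satisfying $\Gamma_R/R\to C\cap\partial B_1$.
\item The strong maximum principle, with $C$ as barrier (also at $0$), puts the minimizer $S_R$ in $K_+$, so $d_R:=\dist(0,S_R)>0$.
\item The one-sided Plateau problem with boundary $C\cap\partial B_1$ has the unique solution $C\cap B_1$: glue a competing solution to $C\setminus B_1$ and apply the strong maximum principle. Hence $S_R/R\to C\cap B_1$ and $d_R=o(R)$.
\item Then $S_R/d_R$ subconverges to a genuine minimizer in $\overline{K_+}$, at distance $1$ from $0$ and distinct from $C$.
\end{itemize}

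Two later steps also need repair even for a genuine minimizer.

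(i) $F\cap\lambda F$ and $F\cup\lambda F$ are not minimizers in general; two crossing half-spaces produce a corner. You first need $\lambda F\subset F$ outside a compact set for $\lambda>1$. This follows from the asymptotics, since the leading coefficient of $\lambda\partial F$ is $\lambda^{1+\gamma}$ times that of $\partial F$. Comparison in large balls, submodularity and the strong maximum principle then give the nesting.

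(ii) The singularity exclusion does not iterate. Once $C_p=C''\times\R p$, there is no second direction with a one-signed normal component, and the sign condition alone does not exclude such a cone. Use instead that nesting makes each ray from $0$ meet $\partial F$ at most once. In log-polar coordinates $\partial F$ is then the graph of a continuous $f$ over $K_+\cap\partial B_1$ solving $\operatorname{div}(\nabla f/W)=n/W$, with $W=\sqrt{1+|\nabla f|^2}$. Interior gradient estimates for mean-curvature-type equations then give smoothness.

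Finally, the ``touching at infinity'' you flag in the uniqueness step is not the real difficulty. With the foliation and the asymptotic expansion in hand, the leaf parameter restricted to $\partial F'$ has a limit at infinity; the limit is $0$ or $\infty$ if $\partial F'$ decays at the other rate. Any interior extremum forces $\partial F'$ to be a leaf by the strong maximum principle, so no next-order or Hopf-type argument is needed.
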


For simplicity of notation, we will henceforth write $T_{\pm\lambda} = \lambda \partial F_\pm$ for $\lambda >0$ and $T_0 = C$, and $F_{\pm \lambda}$ for the sets of locally finite perimeter $\lambda F_\pm$. In other words, we may write $\{T_\lambda\}_{\lambda \in \R}$ for the foliation of $\R^{n+1}$ associated to $C$, with $T_0=C$, which is singular only at $\lambda=0$, and $\{F_{\lambda}\}_{\lambda\in \R}$ for the corresponding family of sets of locally finite perimeter. {Note that the subfamilies $\{F_\lambda\}_{\lambda>0}$ and $\{F_\lambda\}_{\lambda<0}$ are nested. We will henceforth refer to this foliation as the \emph{Hardt-Simon foliation} associated to $C$.}

\begin{remark}
	We will only make use of the existence and regularity part of Theorem \ref{t:Hardt-Simon}. We do not require the uniqueness, although this is the part of the conclusion that is the most difficult to generalize to area-minimizing cones without isolated singularities, and remains open in general; see for instance \cite{Simon_Liouville, Wang, EdSz}.
\end{remark}

Recall (see e.g. \cite{HardtSimon84} or \cite{Simon_entire_MSE}) that stability guarantees the existence of positive Jacobi fields (namely solutions of $\Delta_{C} f + |A_C|^2 f = 0$) given in polar coordinates by $f_\pm(\xi,r) = \phi_1(\xi) r^{-\gamma_1^\pm}$  on $C\setminus \{0\}$, for $\phi_1: C\cap\partial B_1\to (0,\infty)$ a positive first eigenfunction of $\Delta_{C\cap \partial B_1} + |A_{C\cap \partial B_1}|^2$ {with eigenvalue $\lambda_1 \geq -(\tfrac{n-2}{2})^2$} and $\gamma_1^\pm$ solving
\[
\gamma_1^\pm(n-2-\gamma_1^\pm) = -\lambda_1\,,
\]
i.e.
\[
\gamma_1^\pm = \frac{n-2}{2} \pm \sqrt{\left(\frac{n-2}{2}\right)^2 + \lambda_1}\,.
\]
{We normalize $\phi_1$ to be positive with $\|\phi_1\|_{L^2(\partial B_1)}=1$. After this normalization, note} that if $C$ is \emph{strictly stable}, i.e. $\lambda_1 > - (\tfrac{n-2}{2})^2$, then there will be two distinct positive Jacobi fields $f_\pm$, and
$\gamma_1^- < \tfrac{n-2}{2} < \gamma_1^+$. Otherwise, if $\lambda_1 = - (\tfrac{n-2}{2})^2$, there will be just one such positive Jacobi field, and $\gamma_1^\pm = \tfrac{n-2}{2}$.

We recall in addition (see, for instance, \cite{HardtSimon84}*{Theorem 3.2}) that $C$ is said to be \emph{strictly minimizing} if\footnote{In light of \cite{HardtSimon84}, this is equivalent to saying that there exists $\kappa>0$ such that for any $R>0$ and any area-minimizing hypersurface (i.e. codimension 1 integral current) $T$ with $\partial T = \partial C$ in $\overline{B_R}$ satisfying $\spt T \subset B_\rho^c$, one has $\Hcal^{n}(C\cap B_R)\leq \Hcal^{n}(T\cap B_R) - \kappa \rho^{n}$.} there exists $c_0(n,C)>0$ and $\rho_0(n,C,\lambda)>0$ such that for any $\rho_0 < R$ the leaves $T_{\pm\lambda}$ of the Hardt-Simon foliation given by Theorem \ref{t:Hardt-Simon} satisfy
\begin{equation}\label{e:strictly-min-strictly-stable}
    T_{\pm \lambda} \setminus B_{R} = c_0 \lambda \graph_C \big(\phi_1 \cdot |x|^{-\gamma_1^-} + {\rm o}\big(|x|^{-\gamma_1})\big)
\end{equation}
in the case that $C$ is strictly stable, and
\begin{equation}\label{e:strictly-min-non-strictly-stable}
    T_{\pm \lambda} \setminus B_{R} = c_0 \lambda\graph_C \Bigg(\phi_1 \cdot {|x|^{-\tfrac{n-2}{2}}} \log |x| + {\rm o}\Big({|x|^{-\tfrac{n-2}{2}}\log |x|}\Big)\Bigg)
\end{equation}
otherwise (i.e. if $\lambda_1 = -(\tfrac{n-2}{2})^2$). {Here, we are using the notation
    \begin{equation}\notag
      \graph_C u=\{x+u(x)\nu_C(x) : x\in C\setminus\{0\} \}\,.  
    \end{equation}
    for a function $u:C \setminus\{0\}\to \mathbb{R}$, where $\nu_C$ denotes a unit normal to $C\setminus \{0\}$ with fixed orientation.}

{Instead, if $C$ is not strictly minimizing, then 
\begin{equation}\label{e:non-str-min}
    T_{\pm\lambda}\setminus B_R = c_0\lambda \graph_C(\phi_1 \cdot |x|^{-\gamma_1^+})  + {\rm o}(|x|^{-\gamma_1^+})\,,
\end{equation}
see \cite[(1.9)]{HardtSimon84}. 
\subsection{{Cylindrical cones}}\label{subsec:cylindrical prelim}
{If $C=C' \times \mathbb{R}^m$ where $C'$ is an area minimizing cone with isolated singularity, we will (in a slight abuse of notation) use $F_{\pm \lambda}$ and $T_{\pm \lambda}=\partial F_{\pm \lambda}$ to refer to the cylindrical extension to $\mathbb{R}^n$ of subsets of $\mathbb{R}^m$ associated to $C'$ and defined in the previous subsection.}

\subsection{Currents and the Plateau problem} We will be considering the Plateau problem with boundary data that is graphical over $C$. {We adopt the notation of \cite{Simon_GMT}, including the mass $\Mbb(T)$ of a current $T$, the multiplicity one current $\llbracket S \rrbracket$ associated to an oriented submanifold, and the spherical radius $s$ slice $\langle T,|\cdot|,s \rangle$ of a current $T$}. To set notation, we fix a normal vector $\nu_C$ for $C$ and consider the associated current $\a{C}$. We also orient the leaves $T_\lambda$ of the foliation so that $\a{T_\lambda}/r \to \a{C}$ as $r\to \infty$. Given a function $h: C \cap \partial B_s\to \mathbb{R}$, the current $\a{\graph_{C\cap \partial B_s} h}$ is taken to have the orientation that agrees with the slice $\langle \a{C},|\cdot|,s \rangle$ when $h=0$. Graphs over $C$ are similarly oriented.

\section{Compactness revisited}\label{s: compact}
In this section, we revisit the compactness procedure laid out in our previous paper \cite{BNNS}. There, we fixed an area-minimizing hypercone $C$ and considered a sequence of minimization problems $\mu_{R_k}$ ``modeled on $C$'' for clusters with a volume constraint prescribing the first chamber to have volume $1.$ We used the resulting sequence of minimizers of $\mu_{R_k}$ to construct locally minimizing $(1,2)$-clusters whose exterior interfaces blow down to singular cones. To carry out the more refined analysis of the present paper, we need to generalize this setup slightly in two directions. First, we consider minimization problems modeled on a fixed
area-minimizing hypersurface, not necessarily a cone, and second, {given a range of volume constraints $v\in [v_0,1]$ for the $\mu_{R_k}$ problems, we obtain a lower bound depending only on $v_0>0$ for the volume of the first chamber of any limiting local minimizer. Neither of these modifications plays a decisive role in the compactness procedure. The only change is ensuring that lower volume density estimates for the first chamber (Lemma \ref{lem: lower density} below) are uniform across $v\in [v_0,1]$, which in turn is a consequence of uniform estimates in the concentration compactness-type result \cite[Lemma 29.10]{MaggiBook} under lower perimeter and volume bounds on the sets in question. Thus, we recall the main steps from \cite{BNNS} and provide a sketch of how the proofs therein adapt to the present setting.}

Throughout this section, $\Sigma$ is a fixed, possibly singular, entire area-minimizing hypersurface in $\R^{n+1}$ on which all estimates will depend.  
Let
\[
\Lambda_{\Sigma} = \inf\{P(E) - \mathcal{H}^n (\Sigma \cap E^{(1)}) \ : \ E \text{ is a bounded set of finite perimeter with}\, |E| = 1\}. 
\]
Here, by bounded, we mean that there exists $R>0$ such that $E\subset B_R.$

Let $F_\Sigma$ be one of the two sets of locally finite perimeter in $\R^{n+1}$ with $\partial F_\Sigma = \Sigma$.  For a fixed number $R>0$, define the confinement functional
\begin{equation}\label{eq:gR def}
	\cG_R(E) := \int_{E} g_R(|y|) \, dy\,   \qquad \text{where} \qquad     g_R(t) := \begin{cases}
		0 & t \in [0, \sqrt{R}),\\
		\frac{t- {\sqrt{R}}}{\sqrt{R}} & t \geq {\sqrt{R}} \,
	\end{cases}
\end{equation}
and consider the energy functional
\[
\Ecal_R(\X) = \Pcal(\Xcal; B_{4R}) + \Gcal_R(\Xcal(1)) 
\]
for a $(1,2)$-cluster $\X$.
Let $\bar{R}\geq 1$ be such that $|B_{\bar{R}}|\geq 2.$
For $v \in(0,1]$ and $R\geq \bar{R}$, consider the variational problem
\begin{equation}
	\label{eqn: tau problem}
	\tau_{R,\Sigma}(v) = \inf \left\{\Ecal_R(\X) :\ |\Xcal(1)|=v, \ \Xcal(2)\setminus B_{3R} = F_\Sigma\setminus B_{3R},
	\  \Xcal(3)\setminus B_{3R} = F_\Sigma^c\setminus B_{3R} \right\}\,.
\end{equation}
It is not difficult to check that the function $R\mapsto \tau_{\Sigma,R}(v) - \mathcal{H}^n(\Sigma\cap B_{4R})$ is nonnegative (by the minimality of $\Sigma$) and nonincreasing (thanks to nested competitor sets and the fact that $g_{R_2}-g_{R_1} \leq 0$ for $R_2\geq R_1$), and thus the asymptotic normalized energy
\begin{equation}\label{eqn: asymptotic normalized energy}
	\LT_\Sigma(v) = \lim_{R\to \infty} (\tau_{\Sigma,R}(v) - \mathcal{H}^n(\Sigma\cap B_{4R}))
\end{equation}
is well defined. Observe that $\LT_\Sigma(1)\leq \Lambda_\Sigma.$
Note that the constraints in \eqref{eqn: tau problem} ensure that any competitor $\X$ has $\X(1)\subset B_{3R}$, and the direct method guarantees the existence of a minimizer $\X_R^v$ of $\tau_{R,\Sigma}(v)$ for each $v\in(0,1]$ and $R \geq \bar{R}$. As in \cite[Section 3]{BNNS}, a basic competitor argument yields some key initial estimates for such a minimizer. Let $\X'$ be the $(1,2)$-cluster where $\X'(1)$ is a volume-$v$ ball $B^v$ centered at the origin, $\X'(2) = F_\Sigma \setminus \X'(1)$, and $\X'(3) = F_\Sigma^c\setminus\X'(1)$. Taking $\X'$ as a competitor in \eqref{eqn: tau problem}, we find
\begin{equation}
	\label{eqn: bound 1}
	\Pcal(\X_R^v;B_{4R}) \leq \Ecal_R(\X_R^v ) \leq \Ecal_R(\X')  \leq P(B^v) + P(F_\Sigma; B_{4R}).
\end{equation}
Since the local perimeter minimality of $F_\Sigma$ means that $ P(\X_R^v(h); B_{4R}) \geq P(F_\Sigma; B_{4R})$ for $h=2,3$,  we in turn find that
\begin{equation} \label{eqn: bound 3}
	P(\X_R^v(1)) +2 \Gcal_R(\X_R^v(1))  = 2 \Ecal_R(\X_R^v) - P(\X_R^v(2); B_{{4R}})- P(\X_R^v(3); B_{4R}) \leq 2 P(B^v)\,.
\end{equation}
Since $g_R(|y|) \geq s/\sqrt{R}$ for any $y \in \mathbb{R}^{n+1}\setminus B_{\sqrt{R}+s}$, the bound  \eqref{eqn: bound 3} guarantees that 
\begin{equation}\label{eqn: measure bound}
	|\X_R^v(1) \setminus B_{\sqrt{R} + s} | \leq \frac{\sqrt{R} P(B^v)}{s}
\end{equation}
for any $s>0$. Using \eqref{eqn: measure bound} and arguing as in \cite[Lemma 4.1]{BNNS} shows that for $v$ bounded uniformly away from $0$, the first chambers of minimizers of $\tau_{R,\Sigma}(v)$ satisfy the following uniform lower volume density estimates.
\begin{lemma}\label{lem: lower density}
	Fix $v_0 \in (0,1]$ and $R\geq \bar{R}$. 
	There is a constant $c_0 = c_0(v_0,n,\Sigma)>0$ such that if $\X_R^v$ is a minimizer of $\tau_{R,\Sigma}(v)$ with $v \in [v_0,1]$, then 
	\begin{align}
		\frac{|\X_R^v(1) \cap B_\rho(y)|}{\omega_{n+1} \rho^{n+1}} \geq c_0 \label{e:lower-density-1}
	\end{align} 
	for each $x \in\partial \X_R^v(1)$ and $\rho \in (0,1].$
\end{lemma}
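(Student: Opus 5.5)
Throughout, $\X=\X_R^v$ is a minimizer of $\tau_{R,\Sigma}(v)$ with $v\in[v_0,1]$, and $\partial\X(1)$ is understood as the closure of the reduced boundary. The plan is the standard De Giorgi-type lower density argument for volume-constrained perimeter (almost-)minimizers. There are two ingredients: a volume-fixing variation with uniform constants, and a comparison against cutting out a small ball. The only new point compared to \cite[Lemma 4.1]{BNNS} is that every constant must be uniform in $v\in[v_0,1]$ and in $R\geq\bar R$.

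\textbf{Step 1 (uniform volume-fixing variations).} I would first show that there exist $r_*,\sigma_0,C_*>0$, depending only on $v_0,n,\Sigma$, with the following property. For every such $\X$ there is a point $z$ and a one-parameter family of diffeomorphisms $\Phi_t$, supported in $B_{r_*}(z)$, that changes $|\X(1)|$ by any prescribed $\sigma\in(-\sigma_0,\sigma_0)$. The cost in cluster perimeter is at most $C_*|\sigma|$, and in the confinement term $\Gcal_R$ it is also at most $C_*|\sigma|$, since $g_R$ is $1/\sqrt R$-Lipschitz.

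To get the point $z$ uniformly, I would use \eqref{eqn: bound 3} and \eqref{eqn: measure bound}. These give $P(\X(1))\leq 2P(B^1)$ and show that most of the volume $v\geq v_0$ lies in $B_{2\sqrt R}$. The concentration-compactness lemma \cite[Lemma 29.10]{MaggiBook}, applied under these uniform perimeter and volume bounds, then yields a ball of radius $r_*(v_0,n)$ that contains a definite fraction of $\X(1)$ and on which $\X(1)$ has nontrivial boundary. The standard construction \cite[Lemma 29.13]{MaggiBook} then gives the variation, with constants depending only on these bounds. The variation can also be arranged to exchange volume between $\X(1)$ and $\X(2)$ or $\X(3)$ while leaving the boundary condition outside $B_{3R}$ untouched.

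\textbf{Step 2 (comparison).} Fix $x\in\partial\X(1)$ and $\rho\leq\rho_0$, where $\rho_0$ is small and will be removed at the end. Set $m(r)=|\X(1)\cap B_r(x)|$. For a.e.\ $r<\rho$, build the competitor $\X'$ as follows. Remove $\X(1)\cap B_r(x)$ from chamber $1$ and add it to chamber $2$ (or chamber $3$). Then restore the volume using Step 1 with $\sigma=m(r)$. If $B_r(x)$ meets $B_{r_*}(z)$, choose a second variation point instead; enlarging $r_*$-separated choices via \cite[Lemma 29.10]{MaggiBook} allows this.

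Minimality of $\X$ then gives
\[
P(\X(1);B_r(x))\leq \mathcal H^n(\X(1)^{(1)}\cap\partial B_r(x)) + C_* m(r).
\]
Here the interfaces inside $B_r(x)$ between chamber $1$ and the others are replaced by the slice, and $\Gcal_R$ does not increase by more than $C_* m(r)$ because $g_R\geq0$ and only volume is removed. Note that the interface between chambers $2$ and $3$ inside $B_r$ may grow by at most the slice, which explains the factor $\tfrac12$ bookkeeping in $\Pcal$.

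\textbf{Step 3 (ODE argument).} Adding $\mathcal H^n(\X(1)^{(1)}\cap\partial B_r)=m'(r)$ to both sides and applying the isoperimetric inequality gives
\[
c_n m(r)^{n/(n+1)}\leq 2m'(r)+C_*m(r).
\]
For $\rho\leq\rho_0$ small, $C_*m\leq C_*\omega_{n+1}\rho\, m^{n/(n+1)}$ is absorbed into the left-hand side. Integrating gives $m(\rho)\geq c\rho^{n+1}$; here $m(r)>0$ for all $r>0$ because $x$ lies in the support. For $\rho\in[\rho_0,1]$ the estimate follows by monotonicity of $m$ with a smaller constant.

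The main obstacle is Step 1: making the volume-fixing variation uniform over all $R$ and all $v\in[v_0,1]$, when a priori $\X(1)$ could spread out. That is exactly where \eqref{eqn: measure bound} and \cite[Lemma 29.10]{MaggiBook} are needed.
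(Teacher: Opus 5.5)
Your outline matches the paper's: nucleation via \cite[Lemma 29.10]{MaggiBook}, the tail bound \eqref{eqn: measure bound} to place the volume-fixing region where $g_R$ is bounded, a cut-out competitor, and the De Giorgi differential inequality. The difference is that the paper argues by contradiction along a sequence $\X_k$ (with $R_k,v_k$ varying). That sequential setting is what lets the volume-fixing variations of \cite[Lemma 4.1, Steps 2--3]{BNNS} be built with $k$-independent constants. Your direct version has gaps exactly at this uniformity, which is the whole content of the lemma.

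\textbf{Step 1.}
\begin{enumerate}
\item \emph{The volume-fixing constants are not uniform for free.} The standard construction gives constants that depend on the particular set, through the nondegeneracy $\int_{\X(1)}\operatorname{div}T\neq 0$ of a chosen field $T$. Upper bounds on perimeter and volume alone do not control them. Making them uniform needs a compactness argument. The translates $(\X(1)-z)\cap B_{r_*}$ are precompact in $L^1(B_{r_*})$. If $|B_{r_*}|\geq 2$, every limit has mass at least $\eta$ and misses volume at least $1$ of the ball, so it is nontrivial there. Hence finitely many fields work with a uniform lower bound. The same argument is needed for your ``second variation point'': nucleation may return a single ball, in which case you need two disjoint sub-balls with uniform constants.
\item \emph{The $\Gcal_R$-cost is controlled by the size of $g_R$, not its Lipschitz constant.} Restoring volume $\sigma$ near $z$ costs about $g_R(|z|)\sigma$, and $g_R$ reaches $3\sqrt R-1$ on $B_{3R}$. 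The Lipschitz bound only controls the oscillation of $g_R$ over $B_{r_*}(z)$. What you need is $\sup_{B_{r_*}(z)}g_R\le K(v_0,n)$.
\item \emph{The localization claim is false as stated.} You say most of the volume lies in $B_{2\sqrt R}$, but \eqref{eqn: measure bound} with $s=\sqrt R$ only gives $|\X(1)\setminus B_{2\sqrt R}|\le P(B^v)$, which exceeds $v$. Instead take $s=K\sqrt R$ with $P(B^1)/K$ smaller than the mass $\eta$ of each nucleation ball. Then every such ball meets $B_{(K+1)\sqrt R}$. This is the paper's choice $s_k=2\sqrt{R_k}P(B^1)\bar N$.
\item \emph{The perimeter cost needs a local bound on the $2$--$3$ interface.} The flow costs $C|t|\,\Pcal(\X;B_{r_*}(z))$, and \eqref{eqn: bound 3} does not bound the $2$--$3$ interface there. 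A volume-preserving comparison fixes this: reassign the non-$\X(1)$ part of $B_{r_*}(z)\cap B_{3R}$ to chamber $3$. This bounds that interface by $C(n,\Sigma)r_*^n$.
\end{enumerate}

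\textbf{Step 2.} Moving $\X(1)\cap B_r(x)$ into chamber $2$ deletes only the $1$--$2$ interface inside $B_r(x)$. The $1$--$3$ interface becomes a $2$--$3$ interface of the same area, so your remark that the $2$--$3$ interface grows ``by at most the slice'' is wrong. Minimality therefore gives only
\[
\mathcal H^n\big(\partial^*\X(1)\cap\partial^*\X(2)\cap B_r(x)\big)\le m'(r)+C_*m(r),
\]
not a bound on all of $P(\X(1);B_r(x))$. Choose $h\in\{2,3\}$ to be the chamber with the larger interface against $\X(1)$ in $B_r(x)$. This yields $P(\X(1);B_r(x))\le 2m'(r)+2C_*m(r)$, hence $c_n m^{n/(n+1)}\le 3m'+2C_*m$, and your Step 3 then closes as written.

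With these repairs your argument is correct. In substance it is the paper's proof made direct, with the compactness stated explicitly rather than supplied by the contradiction sequence.
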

\begin{proof}
	The proof is similar to \cite[Lemma 4.1]{BNNS}; we sketch the proof here for completeness.
	Suppose by way of contradiction that \eqref{e:lower-density-1} does not hold. Then we may find sequences $\{R_k\}$, $v_k$, and $\rho_k$ and minimizers $\X_{k} = \X_{R_k}^{v_k}$ of $\tau_{R_k,\Sigma}(v)$ such that 
	\begin{equation}
		\label{eqn: density contra}
		\rho_k^{-(n+1)}{|\X_k(1) \cap B_{\rho_k}(y)|} \to 0\,.
	\end{equation}
	Take $c_{n+1}>0$ to be the dimensional constant from the nucleation lemma \cite[Lemma 29.10]{MaggiBook} and fix $0< \e < \min\{v_0, \frac{P(B^{v_0})}{2(n+1)c_{n+1}}\}$. Let $\bar{N}=(\frac{2P(B^1)}{c_{n+1}\e})^{n+1}$. Applying the nucleation lemma to each $\X_k(1)$ together with the bound \eqref{eqn: bound 3}, we obtain points $\{x_{k,1},\dots , x_{k,N_k}\}$ with $N_k \leq v_k(\frac{P(\X_k(1))}{c_{n+1}\e})^{n+1} \leq \bar{N}$, such that $|\X_k(1) \setminus \cup_{i=1}^{N_k} B_1(x_{k,i})| <\e$ and 
	\[
	|B_1(x_{k,i})\cap \X_k(1) | \geq \Big(\frac{c_{n+1}\e}{P(\X_k(1))}\Big)^{n+1} \geq \bar{N}^{-1} \quad \text {for each}\quad i=1,\dots , N_k.
	\]
	Applying \eqref{eqn: measure bound} with $s=s_k = 2\sqrt{R_k}P(B^1) \bar{N}$ ensures that $B_1(x_{k,i})\subset B_{\sqrt{R_n} + s_k +2}$ for each $i=1\dots, N_k,$ and in particular $g_k(|x_{k,i}|)$ is bounded above independently of $i$ and  $k$.
	From here, by repeating Step 2 and Step 3 of \cite[Proof of Lemma 4.1]{BNNS} (which in turn are adaptations of arguments in the proofs of \cite[Lemma 17.2]{MaggiBook} and \cite[Lemma 4.1]{BN}), we perform volume fixing variations and use the classical differential inequality argument to show there exist $\rho_1$ and $c_1$, independent of $k$, such that $|\X_k(1) \cap B_\rho(y) | \geq c_1\rho^n$ for each $k \in \mathbb{N}$, $y \in \partial \X_k(1)$, and $\rho \in (0,\rho_1).$ Up to replacing $c_1$ by $c_1\rho_1^{n+1}$, we see that the same estimate extends to all $\rho \in (0,1]$.  This contradicts \eqref{eqn: density contra} and completes the proof.
\end{proof}

From Lemma~\ref{lem: lower density}, we can show uniform density estimates for minimizers of $\tau_{R,\Sigma}(v)$ with $v \in [v_0,1]$ and that, for a fixed $v$, along (sub)sequences $R_k \to \infty,$ the first chambers $\X_k^v(1)$ concentrate in a uniform number of well-separated balls.
\begin{lemma}\label{lem: density and concentrations}
	Fix $v_0 \in (0,1]$,  $v \in [v_0,1]$, and $R\geq \bar{R}$. 
	There are constants $c_1, C_0>0$ and $N_0 \in \mathbb{N}$ depending on $v_0$, $n$, and $\Sigma$ such that the following holds. 
	Any minimizer $\X_R^v$ of $\tau_{R,\Sigma}(v)$ has
	\begin{equation}\label{eqn: chamber 1 in ball sqrt R}
		\X_R^v(1) \subset B_{C_0\sqrt{R}}
	\end{equation}
	and,  for each  $h \in \{1,2,3\},$ $y \in \partial \X_R^v(h) \cap B_{2R}$, and $\rho \in (0,1]$, satisfies the density estimates
	\begin{align}
		c_1   \leq \frac{ |\X_R^v(h) \cap B_\rho(y)|}{\omega_{n+1}\rho^{n+1}  } \leq 1-c_1 \,.\label{e:lower-density-2-3}
	\end{align} 
	Moreover, consider a sequence $\X_k$ of minimizers to $\tau_{R_k, \Sigma}(v)$ with $R_k \to \infty$. Up to a subsequence, there exist $R_0\geq \bar{R}$,
	$N \leq N_0$ and points $\{x_{k,1},\dots ,x_{k,N}\} \subset B_{C_0\sqrt{R}_k}$ such that 
	\begin{equation}\label{eqn: contained in balls}
		\X_k(1) \subset \bigcup_{i=1}^N B_{R_0}(x_{k,i}),
	\end{equation} 
	$B_{R_0}(x_{k,i})$ intersects $\X_{k}(1)$ nontrivially, and  $\lim_k |x_{k,i} - x_{k,j}| = +\infty$. 
\end{lemma}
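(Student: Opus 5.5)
The plan is to derive all three assertions from Lemma~\ref{lem: lower density}, the energy bounds \eqref{eqn: bound 1}--\eqref{eqn: measure bound}, and standard minimality arguments. The key steps, in order, are: (1) the containment \eqref{eqn: chamber 1 in ball sqrt R}; (2) the two-sided density estimates \eqref{e:lower-density-2-3}; (3) the concentration statement \eqref{eqn: contained in balls}.

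\emph{Containment.} Suppose $y\in\partial\X_R^v(1)$ with $|y|\geq \sqrt{R}+s+1$. By Lemma~\ref{lem: lower density} with $\rho=1$, the ball $B_1(y)$ contains at least $c_0\omega_{n+1}$ volume of $\X_R^v(1)$, all of it outside $B_{\sqrt R+s}$. Then \eqref{eqn: measure bound} gives $c_0\omega_{n+1}\leq \sqrt R P(B^v)/s\leq \sqrt R P(B^1)/s$. This forces $s\leq C\sqrt R$ with $C=C(v_0,n,\Sigma)$. Hence $\partial \X_R^v(1)\subset B_{(C+2)\sqrt R}$. Since $\X_R^v(1)\subset B_{3R}$ is bounded, the chamber itself lies inside this ball, which is \eqref{eqn: chamber 1 in ball sqrt R}.

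\emph{Density estimates.} For $h=1$ the lower bound is Lemma~\ref{lem: lower density}. For $h=2,3$ at points $y\in B_{2R}$ away from $\partial\X(1)$, I would use that $\X(2),\X(3)$ locally minimize perimeter among variations not touching $\X(1)$. Indeed $\mathcal{G}_R$ only sees $\X(1)$, so the volume constraint is vacuous there. The classical comparison $E\mapsto E\cup B_r(y)$ or $E\setminus B_r(y)$ then gives $c\leq |E\cap B_r|/r^{n+1}\leq 1-c$ for the appropriate sets.

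Near the triple junction, i.e.\ when $B_\rho(y)$ meets $\X(1)$, I would follow the cluster argument of \cite[Lemma 4.1]{BN} and \cite[Part~IV]{MaggiBook}. The competitor is $\X(h)\cup B_r(y)$ minus the other chambers, combined with a volume-fixing variation supported in one of the concentration balls far from $y$. This is where uniformity in $v\in[v_0,1]$ enters: the volume-fixing variations must have constants depending only on $v_0$, obtained via the nucleation lemma as in the proof above. The same variation also controls the extra contribution of $\mathcal{G}_R$. On the relevant region $g_R$ is bounded by the containment step, and its change under the variation is $O(r^{n+1})$, which is absorbed for $r\leq 1$. The upper bounds $1-c_1$ follow because the complement of $\X(h)$ is a union of the other chambers, and at least one of them has positive lower density at $y$.

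\emph{Concentration.} Apply the nucleation lemma as in the proof of Lemma~\ref{lem: lower density} to get $N_k\leq\bar N$ balls of radius $1$ carrying all but $\e$ of the volume. Pass to a subsequence with $N_k=N'$ constant. Group the centers into clusters whose pairwise distances stay bounded along a further subsequence, choosing one representative $x_{k,i}$ per cluster, so that representatives separate to infinity. Then choose $R_1$ so that every ball lies within $B_{R_1}(x_{k,i})$ for some $i$.

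The main obstacle is the leftover volume $\e$, since it might sit in small far-away pieces. I would handle it with Lemma~\ref{lem: lower density}: each connected piece of $\X_k(1)$, and indeed each boundary point, carries volume at least $c_0\omega_{n+1}$ in a unit ball. By a Vitali covering, points of $\X_k(1)$ outside $\bigcup_i B_{R_1}(x_{k,i})$ are covered by at most $v/(c_0\omega_{n+1})$ disjoint unit balls. Adding their centers to the list, the total number of centers is bounded by $N_0=\bar N+ 1/(c_0\omega_{n+1})$. Regrouping once more gives \eqref{eqn: contained in balls} with $R_0$ equal to the maximal bounded intra-group distance plus $2$. Each $B_{R_0}(x_{k,i})$ meets $\X_k(1)$ by construction, and $x_{k,i}\in B_{C_0\sqrt{R_k}}$ after enlarging $C_0$.
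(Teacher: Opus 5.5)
The step that does not work as written is the lower density bound for $\X(2),\X(3)$ at points $y$ whose balls meet $\X(1)$. Comparing with $\X(h)\cup B_r(y)$ (other chambers removed from the ball, volume fixed elsewhere) gives
\[
P(\X(h);B_r(y))\le \Hcal^n(\partial B_r(y)\setminus\X(h))+C\,|B_r(y)\setminus\X(h)|\,.
\]
This is a differential inequality for $|B_r(y)\setminus\X(h)|$, so it yields the upper bound $1-c_1$ for $\X(h)$, not the lower bound. Hence the lower bound for $h=2,3$ near $\X(1)$ is never proved. Your final sentence then deduces the upper bounds from that unproved lower bound; the upper bounds do follow directly from your competitor, though.

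For the lower bound you must remove $E=\X(h)\cap B_r(y)$, and here the three-chamber structure matters. Giving all of $E$ to one chamber $\X(j)$ removes only the $h$--$j$ interface in the ball, because the $h$--$k$ interface becomes a $j$--$k$ interface of equal cost. For $h=2$, compare twice:
\begin{enumerate}
\item Give $E$ to $\X(3)$. There is no volume change, and this controls the $2$--$3$ interface.
\item Give $E$ to $\X(1)$ plus a volume-fixing variation. This controls the $1$--$2$ interface. The added $\Gcal_R$ cost is at most $|E|\sup_{B_r(y)}g_R$, which is bounded because this comparison is only needed when $B_r(y)$ meets $\X(1)\subset B_{C_0\sqrt R}$.
\end{enumerate}
Summing gives $P(\X(2);B_r(y))\le 2\Hcal^n(\X(2)\cap\partial B_r(y))+C|E|$, and the usual isoperimetric ODE argument finishes. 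Alternatively, use the infiltration lemma of \cite{MaggiBook}. The paper simply defers this step to \cite[Corollary 4.2]{BNNS} once containment is known.

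Apart from this, your argument is correct but organized somewhat differently. The paper, via \cite[Lemma 4.1, Step 4]{BNNS}, first covers each single minimizer's first chamber by $M\le N_0$ balls $B_4(y_i)$ with $y_i\in B_{C_1\sqrt R}$. It reads off \eqref{eqn: chamber 1 in ball sqrt R} from this cover, and then gets the sequential statement by grouping the $y_i$ along a subsequence.

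Your containment proof is more direct: you play unit-scale lower density at a far boundary point against \eqref{eqn: measure bound}, then observe that a finite-volume set with no boundary outside a ball is null there.

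Your concentration step is more roundabout than needed. Every $x\in\overline{\X_k(1)}$ has $|\X_k(1)\cap B_2(x)|\ge c_0\omega_{n+1}$: either $B_1(x)$ meets $\partial\X_k(1)$, or $B_1(x)\subset\X_k(1)$ up to a null set. So a maximal $4$-separated subset of $\X_k(1)$ has at most $v/(c_0\omega_{n+1})$ points, and the radius-$4$ balls around them cover $\X_k(1)$. This is the paper's cover, and it makes the nucleation step and the leftover bookkeeping unnecessary. Note also that Vitali produces disjoint balls whose enlargements cover; it does not give a cover by disjoint unit balls.
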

\begin{proof}
	Let $\X_R^v$ be a minimizer of $\tau_{R,\Sigma}(v)$.
	Using the uniform lower density estimate \eqref{e:lower-density-1} and the nucleation lemma \cite[Lemma 29.10]{MaggiBook}, we repeat the argument of \cite[Lemma 4.1, Step 4]{BNNS} essentially verbatim to obtain points $\{y_1, \dots, y_M\}$ with $M\leq N_0$ such that 
	\begin{equation}\label{eqn: contained in B4s}
		\{y_i\}_{i=1}^M \subset B_{C_1\sqrt{R}}\qquad \text{ and  }\qquad\X(1) \subset \bigcup_{i=1}^M B_4(y_i),
	\end{equation}
	where  $C_1>0$ and $N_0 \in \mathbb{N}$ depend on $n$, $v_0$, and the constant $c_0$ from \eqref{e:lower-density-1}, and thus on $n,v_0,$ and $\Sigma$. 
	
	This in particular guarantees \eqref{eqn: chamber 1 in ball sqrt R} with $C_0 = C_1 + 4\bar{R}^{-1/2}$. With \eqref{eqn: chamber 1 in ball sqrt R} in hand, the proof of \cite[Corollary 4.2]{BNNS} can be repeated verbatim to obtain the density estimates \eqref{e:lower-density-2-3}.

	Next, take a sequence $\X_k^v$ of minimizers to $\tau_{R_k, \Sigma}(v)$ with $R_k \to \infty$ and the corresponding points $\{y_{k,1}, \dots, y_{k,M_k}\}$ with $M_k\leq N_0$ satisfying \eqref{eqn: contained in B4s} with $\X_k^v$ in place $\X$, we can repeat the argument of \cite[Lemma 4.4]{BNNS} to obtain the final conclusion of the lemma.  
\end{proof}

Now, for the remainder of the section, fix $ v_0\leq v\leq 1$, a sequence $R_k\to \infty$, and a corresponding sequence of minimizers $\X_k$ of $\tau_{R_k,\Sigma}(v)$. Suppose  that we have passed to a subsequence (without relabeling) to obtain $R_0$, $N$, and $\{x_{k,1}, \dots, x_{k,N}\}$ as in Lemma~\ref{lem: density and concentrations}.
For each $k \in \mathbb{N}$ and each $i=1,\dots, N,$ define the {\it concentration} $\X_{k,i}$ to be the translated $(1,2)$-cluster
\begin{equation}\label{eqn: concentration}
	\X_{k,i} = \X_k - x_{k,i}\,. 
\end{equation}
As a direct corollary of the uniform lower density estimates of Lemma~\ref{lem: lower density} is a lower bound on the volume of the first chamber of any concentration:
\begin{lemma}\label{lem: vol lb}
	There exists $\bar{v} = \bar{v}(v_0, n, \Sigma)>0$ such that $|\X_{k,i}(1) \cap B_{R_0}| \geq \bar{v}$ for each $i= 1, \dots N$ and  $k\in \mathbb{N}$.
\end{lemma}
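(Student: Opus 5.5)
The plan is to read this off directly from the lower density estimate of Lemma~\ref{lem: lower density}. By Lemma~\ref{lem: density and concentrations}, the ball $B_{R_0}(x_{k,i})$ meets $\X_k(1)$ nontrivially. So $B_{R_0}(x_{k,i})$ contains either a point of $\partial \X_k(1)$, or an open neighborhood lying entirely in the interior of $\X_k(1)$.

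Here we use the normalization $\overline{\partial^*\X_k(1)}=\partial\X_k(1)$. This means $\X_k(1)$ is identified with its set of density-one points, and its topological boundary is the support of the perimeter measure. We then split into two cases.

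\textbf{Case 1: $\partial\X_k(1)\cap B_{R_0}(x_{k,i})=\emptyset$.} Since $B_{R_0}(x_{k,i})$ is connected and meets $\X_k(1)$, the whole ball lies in the interior of $\X_k(1)$. Then $|\X_k(1)\cap B_{R_0}(x_{k,i})| = \omega_{n+1}R_0^{n+1}\geq \omega_{n+1}$, using $R_0\geq \bar R\geq 1$.

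\textbf{Case 2: there is $y\in\partial \X_k(1)\cap B_{R_0}(x_{k,i})$.} Choose $\rho=\min\{1,\, R_0-|y-x_{k,i}|\}$ if this is positive; to avoid degeneracy near the boundary of the ball, it is cleaner to proceed differently. If $y$ is within distance $1$ of $\partial B_{R_0}(x_{k,i})$, move along the segment from $y$ toward $x_{k,i}$. Either this segment meets another boundary point at distance $\geq 1$ inside the ball, or it stays in the interior of $\X_k(1)$ or of its complement.

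The simplest route is to enlarge the radius: replace $R_0$ by $R_0+1$ in the statement's constant, or note that the concentration balls can be taken with $R_0$ enlarged by $1$ without affecting the conclusions of Lemma~\ref{lem: density and concentrations}. With that convention we have $B_1(y)\subset B_{R_0}(x_{k,i})$. Lemma~\ref{lem: lower density} with $\rho=1$ and $v\in[v_0,1]$ then gives $|\X_k(1)\cap B_{R_0}(x_{k,i})|\geq |\X_k(1)\cap B_1(y)|\geq c_0\,\omega_{n+1}$.

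The constant $c_0=c_0(v_0,n,\Sigma)$ is independent of $k$ and $i$. Translating by $x_{k,i}$, we set $\bar v=\min\{c_0,1\}\,\omega_{n+1}$.

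The main (minor) obstacle is this boundary-of-ball bookkeeping in Case 2, namely guaranteeing a boundary point whose unit ball sits inside $B_{R_0}(x_{k,i})$. A cleaner alternative avoids enlarging $R_0$. Note that by construction (the nucleation step in Lemma~\ref{lem: density and concentrations}), each $x_{k,i}$ can be chosen with $|\X_k(1)\cap B_1(x_{k,i})|$ bounded below, or with $x_{k,i}$ within distance $4$ of a boundary point. Then pick $R_0\geq 5$, so that the unit ball around that boundary point lies in $B_{R_0}(x_{k,i})$. I would adopt whichever normalization of $R_0$ matches the proof of Lemma~\ref{lem: density and concentrations}. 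In either case the estimate is a direct consequence of Lemma~\ref{lem: lower density}.
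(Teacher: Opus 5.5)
Your core idea matches the paper's: pick $y\in\partial\X_k(1)\cap B_{R_0}(x_{k,i})$ and apply Lemma~\ref{lem: lower density} with $\rho=1$. The gap is in the one real difficulty you identify, namely that $B_1(y)$ may stick out of $B_{R_0}(x_{k,i})$. Neither of your fixes proves the lemma for the $R_0$ that was already fixed after Lemma~\ref{lem: density and concentrations}.

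\begin{itemize}
\item Enlarging the radius to $R_0+1$ bounds $|\X_k(1)\cap B_{R_0+1}(x_{k,i})|$ from below. That is a weaker statement, and it becomes the lemma only after re-choosing $R_0$ upstream.
\item The ``cleaner alternative'' relies on properties of the nucleation construction that are neither part of Lemma~\ref{lem: density and concentrations} nor established by you. These are that each $x_{k,i}$ ``can be chosen with $|\X_k(1)\cap B_1(x_{k,i})|$ bounded below, or within distance $4$ of a boundary point.''
\end{itemize}

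The missing step uses the covering \eqref{eqn: contained in balls} together with the separation $|x_{k,i}-x_{k,j}|\to\infty$. You do not need $B_1(y)\subset B_{R_0}(x_{k,i})$; you only need $\X_k(1)\cap B_1(y)\subset B_{R_0}(x_{k,i})$. For $k$ large (discarding finitely many indices), $B_{R_0+1}(x_{k,i})$ is disjoint from every $B_{R_0}(x_{k,j})$ with $j\neq i$. Hence
\[
\X_k(1)\cap B_1(y)\subset \X_k(1)\cap B_{R_0+1}(x_{k,i})\subset B_{R_0}(x_{k,i}),
\]
so $|\X_k(1)\cap B_{R_0}(x_{k,i})|\geq c_0\,\omega_{n+1}$ holds with the original $R_0$. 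This is exactly the paper's argument.

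Your Case~1 is vacuous, since $|B_{R_0}|\geq|B_{\bar R}|\geq 2>v$. In fairness, $R_0$ is produced only existentially, and every conclusion of Lemma~\ref{lem: density and concentrations} survives enlarging it. So your re-normalization would be harmless for the later uses, and it even avoids discarding finitely many $k$. As written, though, it proves a variant of the lemma rather than the stated one.
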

\begin{proof}
	Since $R_0\geq \bar{R}$ and $B_{R_0}$ intersects $\X_{k,i}(1)$ nontrivially by Lemma~\ref{lem: density and concentrations}, there is some $x \in \partial \X_{k,i}(1) \cap B_{R_0}$. By Lemma~\ref{lem: lower density} applied with $\rho =1$, we know that $|\X_{k,i}(1)\cap B_1(x)| \geq c_0\omega_{n+1}$. Since we can moreover assume (up to a subsequence) that the balls $B_{R_0+1}(x_{k,i})$ are disjoint, we know from \eqref{eqn: contained in balls} that $\X_{k,i}(1)\cap B_1(x) \subset B_{R_0}$. Hence the lemma holds with $\bar{v}= c_0\omega_{n+1}.$
\end{proof}
The concentrations $\X_{k,i}$ converge to limiting clusters that are themselves locally minimizing:
\begin{lemma}\label{lem: limit local min}
	For each $i=1,\dots, N,$ there exist partitions $\X_{\infty,i}= (\X_{\infty,i}(1), \X_{\infty, i}(2), \X_{\infty, i}(3))$ {of $\R^n$} by sets of locally finite perimeter with the following properties:
	
	\smallskip
	
	\noindent (i) setting $v_i:=|\X_{\infty,i}(1)|$, we have $v_i \geq \bar{v}$ (with $\bar{v}$ as in Lemma~\ref{lem: vol lb}) and
	\begin{align}\label{eq:vi sum to 1}
		\sum_{i=1}^N v_i = v\, ;
	\end{align}
	
	\noindent (ii) for each $h=1,2,3$, $\X_{k,i}(h)$ and $\partial \X_{k,i}(h)$ converge locally in the Hausdorff distance to $\X_{\infty,i}(h)$ {and $\partial \X_{\infty,i}(h)$ respectively};
	
	\noindent (iii) {
		if $|\X_{\infty,i}(2)|= |\X_{\infty,i}(3)|=\infty,$} then $\X_{\infty,i}$ is a locally minimizing $(1,2)$-cluster with $\X_{\infty,i}(1)\subset B_{R_0}$.   
	If $|\X_{\infty,i}(2)|$ (resp. $|\X_{\infty,i}(3)|$) is finite, then {$\X_{\infty,i}(2)$ (resp. $\X_{\infty,i}(3)$) is empty and}
	\begin{align}\label{eq:floater lower bound}
		\frac{1}{2}\sum_{j=1}^3 P(\X_{\infty,i}(j);B_{2R_0}) \geq (n+1)\omega_{n+1}^{1/(n+1)} v_i^{n/(n+1)}\,.
	\end{align}
	
\end{lemma}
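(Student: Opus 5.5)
We follow the compactness scheme of \cite[Section 4]{BNNS}, tracking the volume parameter $v\in[v_0,1]$ and the model hypersurface $\Sigma$ in place of a cone.

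\emph{Extraction of limits.} The density estimates \eqref{e:lower-density-2-3} hold uniformly in $k$ on $B_{2R_k}$. Since $|x_{k,i}|\le C_0\sqrt{R_k}\ll R_k$, they hold for the translated clusters $\X_{k,i}$ on balls $B_{R_k}$ exhausting $\R^{n+1}$. Local perimeter bounds come from comparing with $\X_k$ modified inside a ball. By BV compactness and a diagonal argument we extract $L^1_{\rm loc}$ limits $\X_{\infty,i}(h)$. The two-sided density estimates then upgrade $L^1_{\rm loc}$ convergence to local Hausdorff convergence of the sets and of their boundaries in the standard way, which gives (ii).

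\emph{Volumes, item (i).} Lemma~\ref{lem: density and concentrations} gives $\X_k(1)\subset\bigcup_i B_{R_0}(x_{k,i})$, and the balls become disjoint because $|x_{k,i}-x_{k,j}|\to\infty$. Hence $|\X_k(1)|=v=\sum_i|\X_{k,i}(1)\cap B_{R_0}|$. Moreover $\X_{k,i}(1)\subset B_{R_0}$ locally near the origin, in the sense that within any fixed ball $B_\rho$ we have $\X_{k,i}(1)\cap B_\rho\subset B_{R_0}$ for $k$ large. So $L^1$ convergence on $B_{R_0}$ yields $|\X_{\infty,i}(1)|=\lim_k|\X_{k,i}(1)\cap B_{R_0}|$. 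Summing gives \eqref{eq:vi sum to 1}, and Lemma~\ref{lem: vol lb} gives $v_i\ge\bar v$.

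\emph{Minimality, item (iii).} This is the main step. Suppose $\X'$ is a competitor for $\X_{\infty,i}$ with $\X'(h)\Delta\X_{\infty,i}(h)\Subset B_\rho$ and the same volumes. We transplant it into $\X_{k,i}$ inside $B_\rho$ by the usual cut-and-paste on a good sphere $\partial B_r$, $r\in(\rho,2\rho)$. The sphere is chosen by slicing, so that the boundary mismatch $\sum_h\mathcal H^n(\partial B_r\cap(\X_{k,i}(h)\Delta\X_{\infty,i}(h)))\to0$. The volume error in chamber $1$ is fixed by volume-fixing variations of \cite[Lemma 29.13]{MaggiBook}, performed in a fixed ball away from $B_{2\rho}$ that meets $\partial\X_{\infty,i}(1)$; the chamber-$2$/$3$ volumes are unconstrained in $\tau$. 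The cost of this fix is $O(|\text{vol error}|)\to0$. The confinement term $\Gcal_{R_k}$ changes by at most $|{\rm vol\ error}|\cdot\sup_{B_{R_0+2\rho}(x_{k,i})}g_{R_k}$, which is bounded since $|x_{k,i}|\lesssim\sqrt{R_k}$, and the volume error tends to zero. The modified cluster is admissible for $\tau_{R_k,\Sigma}(v)$, since $B_{2\rho}(x_{k,i})\subset B_{3R_k}$. Comparing energies and using lower semicontinuity gives $\Pcal(\X_{\infty,i};B_\rho)\le\Pcal(\X';B_\rho)$. If $|\X_{\infty,i}(2)|=|\X_{\infty,i}(3)|=\infty$, this is local minimality; the containment $\X_{\infty,i}(1)\subset B_{R_0}$ follows from (ii).

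\emph{Finite-volume case of (iii).} If instead $|\X_{\infty,i}(2)|<\infty$, then $\X_{\infty,i}(2)\cup\X_{\infty,i}(1)$ together with $\X_{\infty,i}(3)$ splits space. Consider $\X_{\infty,i}(2)\cup\X_{\infty,i}(1)$, which has finite volume. Minimality of the limit lets us replace $\X_{\infty,i}(2)$ by $\emptyset$, absorbing it into chamber $3$ by removing its finitely supported perimeter; this does not increase perimeter and strictly decreases it unless $\X_{\infty,i}(2)$ is null. The density estimate (lower bound $c_1$) then forces $\X_{\infty,i}(2)=\emptyset$. Finally, \eqref{eq:floater lower bound} follows from the isoperimetric inequality applied to $\X_{\infty,i}(1)\subset B_{R_0}$: every boundary point of chamber $1$ is counted in $P(\X_{\infty,i}(1))$, and the factor $\tfrac12$ is compensated by counting the interface with chamber $3$ in both $P(\X_{\infty,i}(1);B_{2R_0})$ and $P(\X_{\infty,i}(3);B_{2R_0})$.
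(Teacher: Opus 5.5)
Your overall scheme is sound. For the new emptiness claim you argue through the minimality of the limit cluster, whereas the paper argues through the minimality of the $\X_k$. However, the finite-volume step has a genuine gap.

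Replacing $\X_{\infty,i}(2)$ by $\emptyset$ is an admissible competitor for the local minimality you proved only if $\X_{\infty,i}(2)$ is bounded, and finite volume alone does not give that. Supplying boundedness is exactly what the paper's proof does:
\begin{itemize}
\item $\X_{\infty,i}(2)$ is locally perimeter minimizing outside $B_{R_0}$, so it satisfies lower volume density estimates there; with finite volume this forces $\X_{\infty,i}(2)\subset B_s$.
\item Hausdorff convergence then makes $\X_{k,i}(2)$ meet $B_{2s}$ but miss $B_{4s}\setminus B_{2s}$.
\item Moving $\X_k(2)\cap B_{3s}(x_{k,i})$ into chamber $3$ then contradicts the minimality of $\X_k$ for $\tau_{R_k,\Sigma}(v)$.
\end{itemize}
In your setting the repair is short. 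The lower bound in \eqref{e:lower-density-2-3} passes to the limit at every $y\in\partial\X_{\infty,i}(2)$. So any disjoint family of unit balls centred on $\partial\X_{\infty,i}(2)$ has at most $|\X_{\infty,i}(2)|/(c_1\omega_{n+1})$ members. Hence $\partial\X_{\infty,i}(2)$ is bounded, and so is $\X_{\infty,i}(2)$, since it has finite volume.

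Three further points in the finite-volume case:
\begin{itemize}
\item State the limit minimality for competitors that preserve only $|\X'(1)|$, because the replacement changes $|\X_{\infty,i}(2)|$. Your transplant argument gives this, since $\tau$ constrains only chamber $1$.
\item Absorbing chamber $2$ into chamber $3$ lowers $\Pcal$ by exactly $\mathcal{H}^n(\partial^*\X_{\infty,i}(2)\cap\partial^*\X_{\infty,i}(3))$. This vanishes if chamber $2$ is a cavity enclosed by chamber $1$, so ``strictly decreases unless null'' is false as stated.
\item A competitor that always works: empty chamber $2$ and replace chamber $1$ by the homothetic copy, of volume $v_i$, of the bounded set $E=\X_{\infty,i}(1)\cup\X_{\infty,i}(2)$. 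Its cost is $\mu^nP(E)$ with $\mu<1$, while $0<P(E)\le\Pcal(\X_{\infty,i};B_\rho)$ for large $\rho$. The paper's competitor $\X_k'$ tacitly needs the same observation.
\end{itemize}

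Two steps of your local minimality argument also need repair.

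First, the volume-fixing ball cannot be placed on $\partial\X_{\infty,i}(1)$ outside $B_{2\rho}$. You show $\X_{\infty,i}(1)\subset\overline{B_{R_0}}$, so no such ball exists once $2\rho\ge R_0$, which is precisely the range local minimality requires. Instead, build the volume-fixing variations for $\X'$ near a point of $\partial^*\X'(1)\cap B_\rho$, where the transplanted cluster coincides with $\X'$. Alternatively, when $N\ge 2$, use another concentration.

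Second, the change in $\Gcal_{R_k}$ is not bounded by the volume error times $\sup g_{R_k}$. The sets $\X'(1)$ and $\X_{k,i}(1)$ differ inside $B_r$ on a set whose measure does not tend to zero. What saves you is that $y\mapsto g_{R_k}(|y|)$ is $R_k^{-1/2}$-Lipschitz. After subtracting the constant $g_{R_k}(|x_{k,i}|)\le C_0$, the change is at most $C_0$ times the volume error plus $O(r^{n+2}R_k^{-1/2})$, which tends to zero.
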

\begin{proof}
	All but {the claim that $v_i \geq \bar{v}$ in (i) and the second claim of (iii)} are direct analogues of those in  \cite[Lemma 4.5]{BNNS} and the proofs are the same. The additional statement that $v_i \geq \bar{v}$ follows from Lemma~\ref{lem: vol lb}. {The additional statement that the finiteness of  $|\X_{\infty,i}(2)|$ implies that $\X_{\infty,i}(2)$ is empty can be deduced as follows. Suppose not. Arguing just as in  \cite[Lemma 4.5(iii)]{BNNS} shows that $\X_{\infty,i}(2)$ is locally perimeter minimizing in $\R^{n+1}\setminus B_{R_0}$. Thus $\X_{\infty,i}(2)$ satisfies volume density estimates in $\R^{n+1}\setminus B_{R_0+1}$ and a standard argument (as in the proof of \eqref{eqn: contained in balls}) shows that $\X_{\infty,i}(2)$ is bounded; say $\X_{\infty, i}(2)\subset B_s$ for some $s>0$. Then by the Hausdorff convergence of the chambers and their boundaries, $\X_{k,i}(2)$ has non-empty intersection with $B_{2s}(x_{k,i})$ and has empty intersection with the annulus $B_{4s}(x_{k,i})\setminus B_{2s}(x_{k,i})$. Taking the competitor $\X'_k$ in $\tau_{\Sigma,R_k}$ {for $R_k > s$} with $\X_k'=(\X_k(1), \X_k(2)\setminus B_{3s}(x_{k,i}),  \X_k(3) \cup (\X_k(2) \cap B_{3s}(x_{k,i}))$ violates the minimality of $\X_k$, a contradiction.}
\end{proof}

Applying Lemma~\ref{lem: limit local min} and repeating the arguments of \cite[Sections 5 and 6]{BNNS} produces a locally minimizing $(1,2)$-cluster modeled on {\it some} singular area minimizing cone:
\begin{proposition}\label{prop: summary from last paper}
	Suppose $\Lambda_{\Sigma} < \Lambda_{plane}$ and $v\in (0,1]$.  For some $i \in \{1,\dots, N\}$,  $\X := X_{\infty,i}$ is a locally minimizing $(1,2)$-cluster and any blowdown of $\partial^*\X(2) \cap \partial^*\X(3)$ is a singular area-minimizing cone with density at most $\Theta_\Sigma$.
\end{proposition}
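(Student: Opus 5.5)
Since $v=\sum_i v_i$ by \eqref{eq:vi sum to 1}, the plan is an energy-splitting argument. We compare the asymptotic energy $\LT_\Sigma(v)$ with the energies of the limiting concentrations. Assuming every $\X_{\infty,i}$ is either a lens-type cluster or has nonsingular blowdown, we will contradict $\Lambda_\Sigma<\Lambda_{\rm plane}$ via strict concavity of $v\mapsto v^{n/(n+1)}$.

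\emph{Step 1: lower semicontinuity split.} The concentrations $\X_{k,i}$ sit in the disjoint balls $B_{R_0}(x_{k,i})$, and these balls separate by \eqref{eqn: contained in balls}. We will show
\[
\liminf_{k}\big(\tau_{R_k,\Sigma}(v)-\Hcal^n(\Sigma\cap B_{4R_k})\big)\ \ge\ \sum_{i=1}^N e_i .
\]
Here $e_i$ is a normalized energy of $\X_{\infty,i}$: either $P(\X_{\infty,i}(1))-\Hcal^n(\partial\X_{\infty,i}(2)\cap\partial\X_{\infty,i}(3)\cap \X_{\infty,i}(1)^{(1)})$-type renormalized quantities, or the isoperimetric bound \eqref{eq:floater lower bound} for floaters. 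Near each $x_{k,i}$ we replace $\X_k$ by the cluster whose exterior interface is an area-minimizing surface. We then use $P(\X_k(h);B_{4R_k})\ge P(F_\Sigma;B_{4R_k})$ together with the fact that the excess concentrates near the chambers. The monotonicity/exterior monotonicity used in \cite{BNNS} controls the cross terms. Since $\Gcal_R\ge0$, it can be dropped.

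\emph{Step 2: identify the pieces.} Suppose no $\X_{\infty,i}$ has singular blowdown. Then each local minimizer with two infinite chambers has planar blowdown, hence planar area growth, and by \cite{BN} is a standard lens. Its normalized energy is therefore $\Lambda_{\rm plane}v_i^{n/(n+1)}$ by scaling. Floaters have energy at least $P(B^{v_i})>\Lambda_{\rm plane}v_i^{n/(n+1)}$. Hence
\[
\LT_\Sigma(v)\ \ge\ \Lambda_{\rm plane}\sum_i v_i^{n/(n+1)}\ \ge\ \Lambda_{\rm plane}\,v^{n/(n+1)}.
\]
For the upper bound we use a scaled competitor. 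Rescaling the near-optimal set $E$ in $\Lambda_\Sigma$ is not directly compatible with $\Sigma$, since $\Sigma$ is not a cone. Instead we place a homothetic copy of a near-optimal set far out, where $\Sigma$ is close to a blowdown cone. Alternatively, we could build the competitor for volume $v$ from scaling and use monotonicity in $R$ to get $\LT_\Sigma(v)\le \Lambda_\Sigma v^{n/(n+1)}$. This requires handling the non-conical $\Sigma$ via its density at infinity, and I expect it to be delicate; in \cite{BNNS} $\Sigma$ was a cone. Combining the two bounds gives $\Lambda_\Sigma\ge\Lambda_{\rm plane}$, which is a contradiction.

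\emph{Step 3: density bound.} Let $\X=\X_{\infty,i}$ have a nonplanar blowdown $K$. Then $K$ is singular, since it is an area-minimizing hypercone. Its density is bounded by $\Theta_\Sigma$: by Hausdorff/varifold convergence and monotonicity, $\Hcal^n(\partial\X(2)\cap\partial\X(3)\cap B_r)/(\omega_nr^n)$ is the limit of the corresponding ratios for $\X_k$ at centers $x_{k,i}$. Those are bounded by the density of $\Sigma$ at infinity via the exterior monotonicity formula \cite{BNNS}*{Lemma A.1}, since outside $B_{3R_k}$ the interface equals $\Sigma$.

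The main obstacle is Step 1, the rigorous energy splitting. Step 2's upper bound for non-conical $\Sigma$ is also delicate. I would follow \cite{BNNS}*{Sections 5--6} closely, comparing with $\Sigma$ on annuli around each concentration.
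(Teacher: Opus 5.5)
Your architecture is the one the paper relies on. The paper proves this proposition by deferring to \cite{BNNS}*{Sections 5 and 6}. As the footnote in the outline of proofs explains, that argument assumes every limit is a lens or a floater and uses \cite{BN} and \eqref{eq:floater lower bound} to get $\LT_\Sigma(v)\ge\Lambda_{\rm plane}\sum_i v_i^{n/(n+1)}\ge\Lambda_{\rm plane}v^{n/(n+1)}$ by concavity. It then contradicts $\Lambda_\Sigma<\Lambda_{\rm plane}$. Your Step 3 (monotonicity plus the energy bound \eqref{eqn: bound 1}) is also fine.

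The step that does not go through as written is the upper bound in Step 2, which is the only place the hypothesis enters. Neither of your suggestions works:
\begin{itemize}
\item At the unit scale relevant for a volume-$v$ set, $\Sigma$ far out is not close to its blowdown cone. That convergence holds only after rescaling by $|x|$, which destroys the volume constraint. The local geometry there is unrelated to the one $E$ was optimized against, and a cone-based bound would involve $\Lambda$ of the blowdown, about which the hypothesis says nothing.
\item Monotonicity of $R\mapsto\tau_{R,\Sigma}(v)-\Hcal^n(\Sigma\cap B_{4R})$ compares confinement radii for a fixed surface. It cannot compensate for dilating $\Sigma$.
\end{itemize}
For $v=1$, the case treated in \cite{BNNS}, no rescaling is needed. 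Take a bounded near-optimal $E$ with $|E|=1$. For large $R$ it lies in $B_{\sqrt R}$, so $\Gcal_R(E)=0$, and $(E,F_\Sigma\setminus E,F_\Sigma^c\setminus E)$ is admissible for $\tau_{R,\Sigma}(1)$ with $\Pcal\le P(E)+\Hcal^n(\Sigma\cap B_{4R})-\Hcal^n(\Sigma\cap E^{(1)})$. This is the observation $\LT_\Sigma(1)\le\Lambda_\Sigma$ made right after \eqref{eqn: asymptotic normalized energy}, and it closes your argument.

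For $v<1$ and $\Sigma$ not a cone, the inequality you need, $\LT_\Sigma(v)<\Lambda_{\rm plane}v^{n/(n+1)}$, is genuinely extra information. With $s=v^{1/(n+1)}$ one has $P(sE)-\Hcal^n(\Sigma\cap(sE)^{(1)})=s^n\big(P(E)-\Hcal^n((\Sigma/s)\cap E^{(1)})\big)$. So dilation only gives $\LT_\Sigma(v)\le v^{n/(n+1)}\Lambda_{\Sigma/s}$, which concerns a different surface. For $\Sigma=T_\lambda$ one has $\Sigma/s=T_{\lambda/s}$, so you would need \eqref{eqn: Lambda gap} at parameter $\lambda v^{-1/(n+1)}$. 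Only for cones does $\Sigma/s=\Sigma$, giving $\LT_\Sigma(v)\le\Lambda_\Sigma v^{n/(n+1)}$. Otherwise you must restrict to $v=1$ or assume the $v$-dependent inequality directly. This is exactly why the proof of Theorem~\ref{thm:main thm} never bounds $\LT_{T_\lambda}(v)$ from above for $v<1$. It iterates the inequality of Lemma~\ref{lemma: dichotomy} and compares only with $\LT_{T_\lambda}(1)\le\Lambda_{T_\lambda}$ via \eqref{eqn: v0 fix}.

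Step 1 is heavier than needed, and your $e_i$ is not the right quantity. By the density estimates, $\partial\X(2)\cap\partial\X(3)$ does not meet $\X(1)^{(1)}$, so your expression reduces to $P(\X_{\infty,i}(1))$; the lens energy subtracts instead the disc of the asymptotic plane. Under the contradiction hypothesis, perform the surgery of Lemma~\ref{lemma: dichotomy} at all $N$ concentrations simultaneously. This removes at least $\Lambda_{\rm plane}v_i^{n/(n+1)}-o_k(1)$ at each and leaves a partition with empty first chamber agreeing with $F_\Sigma$ outside $B_{3R_k}$. Its perimeter in $B_{4R_k}$ is $\ge\Hcal^n(\Sigma\cap B_{4R_k})$ by local minimality of $F_\Sigma$ alone, so no monotonicity or cross-term control is needed in the lower bound.
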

Recall $\Theta_\Sigma$ was defined in \eqref{eqn: density at infinity}. In the sections that follow, we give a refined analysis that produces---for suitably chosen $\Sigma$---a locally minimizing $(1,2)$-cluster whose (unique) blowdown is precisely the blowdown of $\Sigma.$

\section{Proof of Theorem \ref{thm:main thm}}\label{sec: construction}
Throughout this section, we fix  an area-minimizing hypercone $C$ as in Theorem \ref{thm:main thm}, {so that $C$ either has an isolated singularity or is cylindrical}, and satisfies
\[
\alpha_0 := \Lambda_{\plane}(n+1) - \Lambda_C >0.
\]
Let $\{T_\lambda\}_{\lambda\in \R}$ be {the foliation associated with $C$ as defined in Sections \ref{subsec:iso sing prelim}-\ref{subsec:cylindrical prelim}, with the convention that $T_0=C$.} Observe that  there exists $\lambda_0>0$  (depending on $C$) such that for every $\lambda$ with $|\lambda| \leq \lambda_0$ we have
\begin{equation}\label{eqn: Lambda gap}
	\Lambda_{\plane}(n+1) - \Lambda_{T_\lambda} =: \alpha_\lambda \geq \frac{\alpha_0}{2} > 0\,.
\end{equation}
For the remainder of the section, we let $\lambda \in [-\lambda_0, \lambda_0]$ be fixed. Up to swapping the labels of the components $K^\pm$ of $\R^{n+1}$ with boundary $C$, we can and will assume that $\lambda\geq 0.$

We use the shorthand  $\tau_{R,\lambda}(v) =  \tau_{R,T_\lambda}(v)$ for the variational problem \eqref{eqn: tau problem} introduced in the previous section, i.e. 
\begin{equation}
	\label{eqn: tau lambda problem}
	\tau_{R,\lambda}(v) = \inf \left\{\Ecal_R(\X) :\ |\Xcal(1)|=v, \ \ \Xcal(2)\setminus B_{3R} = F_\lambda\setminus B_{3R},
	\ \  \Xcal(3)\setminus B_{3R} = F_\lambda^c\setminus B_{3R} \right\}\,.
\end{equation}

\begin{lemma}\label{lem: trapping}
	Fix $v_0 \in (0,1]$ and $\lambda_- <0\leq \lambda <\lambda_+$. There exist $\bar\rho,\hat{R}>0$ depending on $v_0, n, C, \lambda$, and $\lambda_\pm$ such that if $v \in [v_0,1]$, $R\geq \hat{R}$, and $\X$ is a minimizer of $\tau_{R,\lambda}(v)$ satisfying
	\begin{equation}\label{eqn: hp compactly contained}
		\X(1) \Subset \R^{n+1}\setminus (\overline{F}_{\lambda_+} \cup \overline{F}_{{\lambda_-}})\,,
	\end{equation}
	then there is $\overline{\rho}>0$ such that $\X(1) \subset B_{\bar\rho}{(0)}$ {if the singularity of $C$ is isolated and $\X(1)\subset B_{\overline{\rho}}(0)\times \mathbb{R}^m$ if $C$ is cylindrical} and 
	\begin{equation}\label{eqn: contain conclusion}
		F_{\lambda_+} \subset \X(2) \qquad  \text{ and }\qquad F_{\lambda_-} \subset \X(3)\,.
	\end{equation}
	In particular, $\partial\X(2) \cap \partial\X(3) \subset \R^{n+1}\setminus (F_{\lambda_+} \cup F_{{\lambda_-}}).$ 
\end{lemma}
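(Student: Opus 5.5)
Two things must be shown: that $\X(1)$ is bounded (in the transverse directions in the cylindrical case), and the containments \eqref{eqn: contain conclusion}. For the first, Lemma~\ref{lem: density and concentrations} already gives $\X(1)\subset B_{C_0\sqrt R}$ and a covering by at most $N_0$ balls of radius $4$, so only the dependence on $R$ has to be removed. The key geometric observation is that the region $\R^{n+1}\setminus(F_{\lambda_+}\cup F_{\lambda_-})$ lying between the two leaves is a thin neighborhood of $C$ at infinity. By \eqref{e:strictly-min-strictly-stable}--\eqref{e:non-str-min}, its width at radius $r$ is $O(r^{-\gamma})\to 0$ with $\gamma=\gamma_1^\pm>0$ (resp. $O(r^{-\gamma}\log r)$). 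On the other hand, every point of $\partial\X(1)$ carries the volume density bound of Lemma~\ref{lem: lower density}, namely $|\X(1)\cap B_1(y)|\ge c_0\omega_{n+1}$ uniformly for $v\in[v_0,1]$. A unit ball centered at a point of $C$ at distance $r$ meets the slab in volume $O(r^{-\gamma})$. Hence for $|y|\ge \bar\rho$ (in the cylindrical case, for $|y'|\ge\bar\rho$ where $y=(y',y'')$) this volume falls below $c_0\omega_{n+1}$, contradicting \eqref{eqn: hp compactly contained}. This gives $\X(1)\subset B_{\bar\rho}$ with $\bar\rho$ depending only on $v_0,n,C,\lambda_\pm$. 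Since the leaves are translation invariant in the $\R^m$ factor, the cylindrical case only controls $y'$.

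\textbf{Containments.} Let $U=F_{\lambda_+}$; the case of $F_{\lambda_-}$ is symmetric. Since $\lambda<\lambda_+$ and the family is nested, $F_{\lambda_+}\subset F_\lambda$. Outside $B_{3R}$ we have $\X(2)=F_\lambda\supset F_{\lambda_+}$, and inside $U$ there is no chamber $1$, so only chambers $2$ and $3$ appear there. Suppose $|U\setminus\X(2)|>0$. I would consider the competitor $\X'=(\X(1),\X(2)\cup U,\X(3)\setminus U)$. The modification is compactly supported in $B_{3R}$, leaves the volume of chamber $1$ unchanged, and leaves $\Gcal_R$ unchanged. Moreover $\X(1)\cap\overline U=\emptyset$, so near $\partial U$ only chambers $2$ and $3$ are present. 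The perimeter comparison then reduces to
\[
P(\X(2)\cup U;B_{4R})+P(\X(3)\setminus U;B_{4R})\le P(\X(2);B_{4R})+P(\X(3);B_{4R}).
\]
This follows from the fact that $U$ is a local perimeter minimizer in the outward sense (Theorem~\ref{t:Hardt-Simon}), via the standard inequality $P(E\cup U)\le P(E)$ for $E\supset U$ outside a compact set. Here $\X(3)\setminus U=\X(3)\cap U^c$, and $U^c$ is also a one-sided minimizer.

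The equality case yields strictness. If equality holds, then $\X(2)\cup U$ is also a minimizer. In that case I would apply the strong maximum principle between $\partial U$, which is smooth, and the interface $\partial\X(2)\cap\partial\X(3)$, which is area-minimizing away from $\X(1)$ and lies on one side. This shows the two coincide on a component, which is impossible because they differ outside $B_{3R}$ (there $\partial\X(2)=T_\lambda\neq T_{\lambda_+}$). The requirement $R\ge\hat R$ ensures that $B_{\bar\rho}$ sits well inside $B_{3R}$, so that the interface is a genuine minimizing boundary near the relevant region.

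\textbf{Main obstacle.} The step I expect to be the main obstacle is the rigorous cut-and-paste inequality for unions with a one-sided minimizer when the domain is $B_{4R}$ rather than all of space, together with handling the singular set. The singular set is at the origin in the isolated case and a cylinder otherwise, and $U$ stays away from it since $\lambda_\pm\neq0$. The final statement $\partial\X(2)\cap\partial\X(3)\subset\R^{n+1}\setminus(F_{\lambda_+}\cup F_{\lambda_-})$ follows immediately from \eqref{eqn: contain conclusion}, since $F_{\lambda_\pm}$ are open.
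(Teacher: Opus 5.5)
Your boundedness argument is the paper's: lower density at points of $\partial\X(1)$ versus the vanishing volume of $B_1(y)\setminus(\overline F_{\lambda_+}\cup\overline F_{\lambda_-})$. Your comparison $\mathcal{P}(\X')\le\mathcal{P}(\X)$ via submodularity is also correct. It actually uses that $U$ minimizes perimeter among compact perturbations it \emph{contains}, not the outward property, but this holds since $F_{\lambda_+}$ is a two-sided local minimizer.

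\textbf{The gap is in the equality case.} You apply the strong maximum principle to $\partial U$ and $\partial\X(2)\cap\partial\X(3)$. Under your contradiction hypothesis chamber $3$ enters $U$, so this interface does \emph{not} lie on one side of $\partial U$. If you mean the interface of $\X'$, you never produce a point where it touches $\partial U$.

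More seriously, the cluster interface is minimizing only for perturbations supported in $B_{3R}$ (and away from $\overline{\X(1)}$). So any continuation of a coincidence set along $T_{\lambda_+}$ stops at $\partial B_{3R}$. Consider an interface that follows $T_{\lambda_+}$ inside $B_{3R}$, runs along a piece of $\partial B_{3R}$ (which is counted in $P(\cdot\,;B_{4R})$), and continues as $T_\lambda$ outside. The maximum principle alone does not exclude this, so ``they differ outside $B_{3R}$'' yields no contradiction without a further boundary argument you do not supply.

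\textbf{The missing idea is to read off the right minimizer from your own equality case.} Equality in the total cluster perimeter forces equality in each of your two submodularity chains, and in particular $P(\X(2)\cap U;B_{4R})=P(U;B_{4R})$. Since $\X(1)\cap U=\emptyset$, the set $G:=\X(2)\cap U$ equals $U\setminus\X(3)$. Every compact perturbation of $G$ in $B_{4R}$ is also one of $U$, so $G$ is perimeter minimizing in all of $B_{4R}$. Its minimality is inherited from $U$, not from the cluster, so $\partial B_{3R}$ plays no role.

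Moreover $G\subset U$, and $G=U$ on $B_{4R}\setminus\overline{B_{3R}}$ because $\X(3)$ is disjoint from $F_\lambda\supset U$ there. Hence the two boundaries touch automatically, with no need to locate a touching point. The strong maximum principle for nested perimeter minimizers then gives local coincidence, and connectedness of $T_{\lambda_+}\cap B_{4R}$ yields $G=U$, i.e.\ $F_{\lambda_+}\subset\X(2)$.

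This is exactly the paper's argument. The paper reaches the same equality from the other direction: minimality of $\X$ gives $P(G;B_{4R})\le P(F_{\lambda_+};B_{4R})$, and minimality of $F_{\lambda_+}$ gives the reverse. Contrary to your ``main obstacle'' remark, the cut-and-paste inequality is routine; the real crux is choosing $G$ as the object for the maximum principle.
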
 
\begin{remark}
	\rm{
		The assumptions that $\lambda_-<0$ and $\lambda_+>0$ are simply made to streamline the statement using the notation for the sets $F_{{\lambda_+}}$. While the statement above is sufficient for our needs, it is immediate from the proof that the analogous result holds when $0\leq \lambda_- <\lambda <\lambda_+$.
	}
\end{remark} 
\begin{proof}
	To see the first conclusion, let $\bar{R}$ be the dimensional constant from Section~\ref{s: compact}, assume
	$R \geq \bar{R}$ and fix $x \in \partial \X(1).$ By Lemma~\ref{lem: lower density}, $|\X(1)\cap B_1(x)| \geq c_0\omega_{n+1}$ where $c_0= c_0(v_0, n, C, \lambda)$. On the other hand, in the case that $C$ has an isolated singularity, the assumption \eqref{eqn: hp compactly contained} guarantees that $|\X(1)\cap B_1(x) | \leq |B_1(x) \setminus (\overline{F}_{\lambda_+} \cup \overline{F}_{{\lambda_-}})|$, which tends to zero as $|x|\to \infty$. Hence $|x|<\bar\rho$ for a constant $\bar\rho$ depending on $v_0, n, C, \lambda, \lambda_+$, and $\lambda_-$. Similarly,
    if $C=C'\times \R^m$ is cylindrical, then {we still have $|\X(1)\cap B_1(x)| \leq |B_1(x)\setminus (\overline{F}_{\lambda_+} \cup \overline{F}_{{\lambda_-}})|$, and this still
    tends to zero as $|x|\to \infty$ for any $x\in \R^{n+1}$ lying outside {a tubular neighborhood of the} linear subspace $\{0\}\times \R^m$}, so $\X(1)\subset B_{\overline{\rho}}(\{0\}\times \R^m)$ where $\overline{\rho}$ has the same dependencies. 

    Next we show \eqref{eqn: contain conclusion}.
	We may assume $P(\X(3); T_{\lambda_+}) =P(\X(3); T_{\lambda_-}) = 0$; otherwise apply the following argument along a sequence $\lambda_+^j\downarrow \lambda_+$ and $\lambda_-^j \uparrow \lambda_-$ where this holds. We also assume for simplicity that $\X(1),\X(2)$ are open, which we may do up to modifying on a Lebesgue null set using minimality. Note that $\X(3)$ has trivial intersection with $F_{\lambda_+}\setminus \overline{B_{3R}}$ thanks to the constraints of the minimization problem \eqref{eqn: tau lambda problem} and the fact that $F_{\lambda_+}\subset F_{\lambda}$. We claim that $\X(3)$ has trivial intersection with $F_{\lambda_+} \cap \overline{B_{3R}}$ as well, thus establishing the first conclusion of \eqref{eqn: contain conclusion}.

Indeed, consider the $(1,2)$-cluster $\Xcal' =(\X'(1), \X'(2), \X'(3))$ with 
	\[
	\X'(1) = \Xcal(1), \quad \X'(2) =  \Xcal(2)\cup F_{\lambda_+},\quad \X'(3) = \Xcal(3) \setminus F_{\lambda_+}\,.
	\]
Taking $\X'$ as a competitor for $\tau_{R,\lambda}(v)$ and using that $\X(1)=\X'(1)$, we find
\[
\mathcal{H}^{n}(\partial\X(2)\cap \partial \X(3)) \leq \mathcal{H}^{n}(\partial\X'(2)\cap \partial \X'(3))
\]
which in particular shows that 
\begin{equation}\label{eqn: may31}
P(\X(3); F_{\lambda_{+}}\cap B_{4R}) \leq P(T_{\lambda_+}; \X(3) \cap B_{4R}).
\end{equation}
On the other hand, consider the set $G:=F_{\lambda_+}\setminus (\X(3) \cap {B}_{3R})$ Since $\X(3)\setminus B_{3R} \subset  F_{\lambda_+}^c \setminus B_{3R} $, we see that $G=F_{\lambda_+}\setminus \X(3)$. Using that $F_{\lambda_+}$ is locally perimeter minimizing, \cite[Theorem 16.3]{MaggiBook}, and \eqref{eqn: may31}, we have 
\begin{align*}
P(F_{\lambda_+}; B_{4R}) \leq P(G; B_{4R})& = P(F_{\lambda_+} ; B_{4R}\setminus \X(3)) + P(\X(3) ; F_{\lambda_+} \cap B_{4R})\\
& \leq P(F_{\lambda_+} ; B_{4R}\setminus \X(3)) +P(T_{\lambda_+}; \X(3) \cap B_{4R}) = P(F_{\lambda_+};B_{4R}) \,.
\end{align*}
So, $G$, along with $F_{\lambda_+}$, is locally perimeter minimizing in $B_{4R}$, and $G\subset F_{\lambda_+}$. This, however, violates the maximum principle. Indeed, by construction, there is a point $x \in \partial G\cap \partial F_{\lambda_+}\cap B_{4R}$. Then \cite[Lemma 37.10]{Simon_GMT} or \cite[Theorem 2.2]{Sternberg1992} (the latter of which is a consequence of \cite[Corollary 1]{simon1987strict}) guarantees $\partial G$ and $\partial F_{\lambda_+}$ coincide in a neighborhood of $x$. Since $S:=\partial F_{\lambda_+} \cap B_{4R} = T_{\lambda_+} \cap B_{4R}$ is connected, this means the set $A:=S \cap \partial G$ is a relatively open and closed subset of $S$, hence $A=T_{\lambda_+}\cap B_{4R}$. In other words, $G= F_{\lambda_+}$, i.e. $F_{\lambda_+}\cap \X(3)$ is empty, establishing the claim. 
	
	Repeating this argument with $F_{\lambda_-}$  in place of $F_{\lambda_+}$ and with the roles of $\X(2)$ and $\X(3)$ swapped shows that $\X(2)$ has trivial intersection with $F_{\lambda_-}$. From the boundary data of \eqref{eqn: tau lambda problem}, we then deduce that $F_{\lambda_-} \subset \X(3)$. This completes the proof.
\end{proof}

Now, to set up the statement of the next lemma and the constructions giving rise to Theorems~\ref{thm:main thm} and \ref{thm:main thm2}, fix $0<v_0 \leq v  \leq 1$, and choose a sequence $R_k \to \infty.$ Let $\X_k$ be corresponding minimizers of $\tau_{R_k, \lambda}(v)$. Pass to a subsequence, without relabeling, to obtain $R_0$, $N$, and $\{x_{k,i}\}_{i=1}^N$ as in Lemma~\ref{lem: density and concentrations}, so that in particular 
\[
\X_k(1) \subset \bigcup_{i=1}^N B_{R_0}(x_{k,i})
\]
by \eqref{eqn: contained in balls}.
{For $i=1,\dots,N$, let $v_i=|\X_{\infty,i}(1)|$ for the limits $\X_{\infty,i}$ of} the corresponding concentrations \eqref{eqn: concentration} as obtained in Lemma~\ref{lem: limit local min}.
For each $k \in \mathbb{N}$ and $i=1,\dots, N$, choose $\lambda_{k,i} \in \R$ so that 
\[
B_{R_0}(x_{i,k}) \subset \R^{n+1}\setminus F_{\lambda_{k,i}} \qquad \text{ and } \qquad \text{dist}(x_{i,k}, T_{\lambda_{k,i}}) = R_0+1.
\]
After passing to a subsequence and re-indexing in $i$, we may assume $|\lambda_{k,1}| = \max_{1\leq i\leq N}\{ |\lambda_{k,i}| \}$ for each $k$, and that each $\lambda_{k,1}$ has the same sign. Without loss of generality we may assume $\lambda_{k,1}\geq 0$. Define the constant $\lambda_*\in \R_+ \cup\{+\infty\}$ 
by $ \lambda_*:=\liminf_{k\to \infty}\, \lambda_{k,1}$.We pass to a further subsequence, without relabeling, so that this $\liminf$ is a limit, i.e.
\begin{equation}\label{eqn: lambda star def}
	\lambda_*:=\lim_{k\to \infty}\, \lambda_{k,1}.
\end{equation}
It follows from the first conclusion of Lemma~\ref{lem: trapping} that $\lambda_*<\infty$ if the sequence $\{x_{k,1}\}$ is bounded, while $\lambda_*=+\infty$ if the sequence is unbounded. 
We prove the following dichotomy using Lemma~\ref{lem: trapping}. The argument is similar to the proof of \cite[Theorem 1.2]{BNNS}.  Here $\bar\rho>0$ is the constant from Lemma~\ref{lem: trapping}.
\begin{lemma}   \label{lemma: dichotomy}
	The following dichotomy holds:\\
	
	$\bullet$ If $\lambda_*< +\infty$, then $\X_k(1) \subset B_{\bar\rho}{(0)}$ {if the singularity of $C$ is isolated and $\X_k(1)\subset B_{\overline{\rho}}(0)\times \mathbb{R}^m$ if $C$ is cylindrical} and 
	\begin{equation}\label{eqn: all mass stays}
		 F_{2\lambda_*} \subset \X_k(2), \qquad \text{and}\qquad F_{-2\lambda_*} \subset \X_k(3) 
	\end{equation}
	for all $k$ sufficiently large. {Thus $\X_{\infty,1}(1) \subset B_{\overline{\rho}}(0)$, $F_{2\lambda_*} \subset\X_{\infty,1}(2)$, and $F_{-2\lambda_*} \subset \X_{\infty,1}(3)$.}

	$\bullet$ If $\lambda_*=+\infty$, then 
	\begin{align}
		\LT_\Sigma(v) \geq \Lambda_{\rm plane}(n+1)v_1^{n/(n+1)} + \LT_\Sigma(v-v_1)\,.
	\end{align}
	Here $v_1 \geq \bar{v}$ is first-chamber volume of the limit $\X_{\infty,1}$ {and $\LT_\Sigma(v)$ is the asymptotic renormalized energy defined in \eqref{eqn: asymptotic normalized energy}. We adopt the convention that $\LT_{T_\lambda}(0)=0.$}
\end{lemma}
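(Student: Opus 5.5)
Both cases start from the choice of $\lambda_{k,i}$, which places every concentration ball $B_{R_0}(x_{k,i})$ strictly between two leaves of the foliation. The bounded case then reduces to Lemma~\ref{lem: trapping}. The unbounded case needs a splitting argument in the spirit of the proof of \cite[Theorem 1.2]{BNNS}.

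\emph{Case $\lambda_*<\infty$.} For $k$ large we have $|\lambda_{k,i}|\le|\lambda_{k,1}|<\tfrac32\lambda_*$ for every $i$; if $\lambda_*=0$, replace $\tfrac32\lambda_*$ by a small positive number. By the definition of $\lambda_{k,i}$, each ball $B_{R_0}(x_{k,i})$ lies in $\R^{n+1}\setminus F_{\lambda_{k,i}}$ at positive distance from $T_{\lambda_{k,i}}$. Because the families $\{F_\mu\}_{\mu>0}$ and $\{F_\mu\}_{\mu<0}$ are nested, these balls stay at positive distance from $\overline F_{2\lambda_*}\cup\overline F_{-2\lambda_*}$. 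Together with \eqref{eqn: contained in balls}, this gives
\[
\X_k(1)\Subset \R^{n+1}\setminus(\overline F_{2\lambda_*}\cup\overline F_{-2\lambda_*}).
\]
We may take $2\lambda_*>\lambda$, enlarging $\lambda_*$ if necessary. Lemma~\ref{lem: trapping} with $\lambda_\pm=\pm2\lambda_*$ then yields $\X_k(1)\subset B_{\bar\rho}$ (or $B_{\bar\rho}\times\R^m$ in the cylindrical case), together with \eqref{eqn: all mass stays}. In the isolated-singularity case, $\X_k(1)\subset B_{\bar\rho}$ forces the $x_{k,1}$ to be bounded. Hence $\X_{k,1}$ differs from $\X_k$ by a bounded translation, and after a subsequence we may assume $x_{k,1}\to x_\infty$. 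The containment statements for the limit then follow from the local Hausdorff convergence in Lemma~\ref{lem: limit local min}(ii), transported by $x_\infty$. This step is routine.

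\emph{Case $\lambda_*=\infty$.} The concentration $\X_{k,1}$ sits at distance $R_0+1$ from a leaf $T_{\lambda_{k,1}}$ with $\lambda_{k,1}\to\infty$. By scaling, $T_{\lambda_{k,1}}-x_{k,1}=\lambda_{k,1}(T_1-x_{k,1}/\lambda_{k,1})$, and the relevant point of $T_1$ stays at distance of order $1/\lambda_{k,1}$ from it. Since $T_1$ is smooth with bounded curvature (away from the axis in the cylindrical case, where we instead rely on the cylindrical structure), the translated leaves converge locally smoothly to a hyperplane $H$. The limit cluster $\X_{\infty,1}$ then has $\X_{\infty,1}(1)$ on one side of $H$.

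Next I would show that the exterior interface of $\X_{\infty,1}$ lies in a half-space. Lemma~\ref{lem: trapping}, applied with the leaf through the concentration, shows that $\X_k(2)$ contains $F_{\lambda_{k,1}+\delta}$. In the limit, $\X_{\infty,1}(2)$ contains a half-space bounded by a translate of $H$. Outside the compact set $\overline{\X_{\infty,1}(1)}$, the exterior interface is area minimizing and lies on one side of a plane. The maximum principle, together with the blow-down and the monotonicity formula, shows that its blow-down is a plane. The classification of \cite{BN} for clusters with planar blow-down then identifies $\X_{\infty,1}$ as a lens. Part (iii) of Lemma~\ref{lem: limit local min} rules out the degenerate floater case, or handles it since \eqref{eq:floater lower bound} is even larger. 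Consequently, the energy carried by this concentration in $B_{R'}(x_{k,1})$ is asymptotically
\[
\mathcal H^n(T_{\lambda}\cap B_{R'}(x_{k,1}))+\Lambda_{\rm plane}(n+1)v_1^{n/(n+1)}.
\]

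\emph{Energy splitting, the main obstacle.} For fixed large $R'$, I would cut out $B_{R'}(x_{k,1})$ and replace $\X_k$ inside it by the leaf structure $F_\lambda$ without chamber $1$. The concentrations are separated and $\X_k(1)\cap B_{R'}(x_{k,1})$ is confined to $B_{R_0}(x_{k,1})$, so this produces a competitor for $\tau_{R_k,\lambda}(v-v_k')$ with $v_k'\to v_1$. The gluing error on $\partial B_{R'}(x_{k,1})$ comes from the mismatch between $\partial\X_k(2)$ and the leaf, and it tends to $0$ as $k\to\infty$ and then $R'\to\infty$, by Hausdorff convergence to the lens, whose exterior interface is flat. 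The confinement term $\Gcal_{R_k}$ only decreases. Comparing energies gives
\[
\tau_{R_k,\lambda}(v)\ge \tau_{R_k,\lambda}(v-v_k')+\Lambda_{\rm plane}v_1^{n/(n+1)}-o(1).
\]
Subtracting $\mathcal H^n(T_\lambda\cap B_{4R_k})$ and letting $k\to\infty$ gives the claim. Passing from volume $v-v_k'$ to $v-v_1$ uses continuity of $\LT$, which I would get by a small dilation or volume-fixing variation of chamber $1$. The delicate point is the uniform control of the boundary mismatch for the exterior interface near $\partial B_{R'}(x_{k,1})$. I would handle it by choosing a good radius $R'$ via a slicing (coarea) argument and patching with a thin annular cobordism whose area is controlled by the Hausdorff closeness to the plane.
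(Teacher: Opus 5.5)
Your case $\lambda_*<\infty$ follows the paper. So does the first half of the case $\lambda_*=\infty$: flattening of $T_{\lambda_{k,1}}-x_{k,1}$, trapping by Lemma~\ref{lem: trapping}, then the maximum principle and \cite{BN} to identify a lens.

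\textbf{The gap is in the excision step.} Near $x_{k,1}$, the exterior interface $\partial\X_k(2)\cap\partial\X_k(3)$ is Hausdorff close, by Lemma~\ref{lem: limit local min}(ii), to the \emph{flat} interface of the limiting lens. That is a hyperplane parallel to $H$ through $x_{k,1}+y$, where $y$ is the lens center. It has nothing to do with the fixed boundary leaf $T_\lambda$. The concentration sits at distance $R_0+1$ from $T_{\lambda_{k,1}}$ with $\lambda_{k,1}\to\infty$, so $T_\lambda$ need not be close to that hyperplane inside $B_{R'}(x_{k,1})$; it may even miss the ball.

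Two consequences follow.
\begin{itemize}
\item Your asymptotic $\mathcal H^n(T_\lambda\cap B_{R'}(x_{k,1}))+\Lambda_{\plane}(n+1)v_1^{n/(n+1)}$ for the local energy is wrong. For a ball centered at the lens center and larger than the lens, the correct value is $\omega_nR'^n+\Lambda_{\plane}(n+1)v_1^{n/(n+1)}+o_k(1)$.
\item Replacing $\X_k$ in the ball by the $F_\lambda$ structure creates a mismatch on the sphere that convergence to the lens does not make small. So your competitor for $\tau_{R_k,\lambda}(v-v_k')$ has no controlled energy.
\end{itemize}

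The paper instead glues in a correctly oriented flat disc at a good radius $cR_0$, $c\in[1,3/2]$, around $x_{k,1}+y$, with chambers $2$ and $3$ filling the two half-balls. This costs $\omega_n(cR_0)^n+o_k(1)$. The saving is then exactly $\Lambda_{\plane}(n+1)v_1^{n/(n+1)}+o_k(1)$ at a fixed radius, with no need to send $R'\to\infty$.

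\textbf{The floater case also needs its own competitor.} Your half-space observation does not exclude the floater alternative of Lemma~\ref{lem: limit local min}(iii): $\X_{\infty,1}(3)$ may still be empty. For large $k$ one of chambers $2,3$ then does not meet $B_{4R_0}(x_{k,1})$. One simply fills $B_{3R_0}(x_{k,1})$ with the other chamber, so there is no exterior interface to glue. Then \eqref{eq:floater lower bound}, together with $\Lambda_{\plane}(n+1)<(n+1)\omega_{n+1}^{1/(n+1)}$, gives a saving larger than $\Lambda_{\plane}(n+1)v_1^{n/(n+1)}$. Your remark that this bound is ``even larger'' is the right idea, but the $F_\lambda$-gluing does not apply here either.

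With these two corrections, the rest of your argument goes through as in the paper: confinement of chamber $1$ to $B_{R_0}(x_{k,1})$, monotonicity of $\Gcal_{R_k}$ under removal, and continuity of $\tau_{R,\lambda}$ in $v$ uniformly in $R$.
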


\begin{proof}
	If ${\lambda}_* <+\infty,$  then for all $k$ large enough, the clusters $\X_k$ satisfy the assumption \eqref{eqn: hp compactly contained} with $\lambda_{\pm} = \pm 2{\lambda}_*.$ The conclusion \eqref{eqn: all mass stays} then follows directly from Lemma~\ref{lem: trapping}. The remaining conclusions are then immediate consequences of Lemma~\ref{lem: limit local min}.
	
	Next suppose ${\lambda}_* = +\infty$. Each shifted leaf $\hat{T}_k := T_{\lambda_{k,1}} -x_{k,1}$ intersects $B_{R_0+2}$ nontrivially. Moreover, $\sup_{x \in T_{\pm} }|A_{T_\pm}(x)|<+\infty$, so by scaling we have $\sup_{x \in \hat{T}_k }|A_{\hat{T}_k}(x)|\to 0$ as $k \to \infty$. Here $A$ denotes the second fundamental form.
	As a consequence, $\hat{T}_{k}$ converges locally in $C^2$ to a hyperplane $\Pi$.
	
	Furthermore, each $\X_k$ satisfies \eqref{eqn: hp compactly contained} with $\lambda_{\pm} = \pm \lambda_{k,1}$, and hence Lemma~\ref{lem: trapping}  guarantees that 
	\[
	\partial \X_k(2) \cap \partial \X_k(3) \subset \R^{n+1} \setminus (F_{\lambda_{k,1}} \cup F_{-\lambda_{k,1}}).
	\]
	In particular, the concentration $\X_{k,1}$ has all interfaces lying to one side of $\hat{T}_k$, 
	and thus its limiting partition $\X_{\infty,1}= (\X_{\infty,1}(1), \X_{\infty,1}(2), \X_{\infty,1}(3))$ obtained in Lemma~\ref{lem: limit local min} has interfaces lying to one side of $\Pi$, and $|\X_{\infty,1}(1)|= v_1 \geq\bar{v}$. In view of Lemma~\ref{lem: limit local min}(iii), there are two possible scenarios: either $|\X_{\infty,1}(2)|=|\X_{\infty,1}(3)|=+\infty$, or $|\X_{\infty,1}(h)|<\infty$ for one of $h=2,3.$ 
	\\
	
	\noindent{\it Case 1:}
	First suppose $|\X_{\infty,1}(2)|=|\X_{\infty,1}(3)|=+\infty.$ By Lemma~\ref{lem: limit local min}(iii),  $\X_{\infty,1}$ is a locally minimizing $(1,2)$-cluster. By the considerations above, $\X_{\infty,1}(2)$ contains a half space bounded by $\Pi$. This means any blowdown of $\partial \X_{1,\infty}(2)\cap \partial \X_{1,\infty}(3)$ is a minimal surface contained in a half space, and hence is a hyperplane by the maximum principle. By \cite[Theorem 2.9]{BN}, this implies that $\X_{\infty,1}$ is a standard lens cluster of volume $v_1$ centered at a point $y \in \R^{n+1}$.  
	
	Let us use this information to bound  the energy of $\X_k$ from below. Let $\hat{B}_k = B_{3R_0}(x_{k,1}+y)$, and note that $B_{R_0}(x_{k,1})\subset \hat{B}_k.$ Assuming without loss of generality that $R_0$
	is large enough so that $\X_{\lens}(1)\subset B_{R_0}$, we have
	\begin{equation}
		\mathcal{P}(\X_{\infty,1}; B_{3R_0}(y))  =  (3R_0)^{n}\omega_{n} +\Lambda_{\plane}(n+1) v_1^{n/(n+1)},
	\end{equation}
	and therefore
	\begin{align}\label{eqn: energy 1}
		\mathcal{P}(\X_k ; \hat{B}_k)  = (3R_0)^n \omega_n + \Lambda_{\plane}(n+1) v_1^{n/(n+1)} + o_k(1).
	\end{align}
	
	By Lemma~\ref{lem: limit local min}(ii), each interface of the concentration $\X_{k,1}$ converges locally in the Hausdorff distance to the corresponding interface of the standard lens cluster above. In particular, the interface $\partial \X_k(2) \cap \partial \X_k(3)$ becomes arbitrarily close to a plane in the annulus $B_{4R_0}(x_{k,1}+y) \setminus \hat{B}_k$. So, we may find sets\footnote{Concretely, one can obtain (the interface between) such sets by gluing a correctly oriented plane in $B= B_{cR_0}(x_{k,1}+y)$, for a suitable $c \in [1,3/2]$, to the interface $\partial \X_k(2)\cap \partial \X_k(3)$ in $B_{4R_0}(x_{k,1}+y)\setminus B$, incurring a $o_k(1)$ error of energy along a portion of $\partial B$.  
	} $\X_k'(2)$ and $\X_k'(3)$ that coincide with $\X_k(2)$ and $\X_k(3)$ respectively in $B_{4R_k}\setminus \hat{B}_k$ and are complementary in $\hat{B}_k$ so that the cluster $\X'_k = (\X_k(1)\setminus \hat{B}_k, \X_k'(2), \X_k'(3))$ satisfies
	\begin{equation}\label{eqn: energy 3}
		\mathcal{P}( \X_k'; B_{4R_k} )
		= \mathcal{P}(\X_k ; B_{4R_k}\setminus \hat{B}_k)
		+ \omega_{n}(3R_0)^n + o_k(1)\,. 
	\end{equation}
	
	Rearranging and summing \eqref{eqn: energy 1} and \eqref{eqn: energy 3} shows that
	\begin{equation}\label{eqn: replaced cluster}
		\mathcal{P}(\X_k ; B_{4R_k}) = \mathcal{P}( \X_k'; B_{4R_k} ) + \Lambda_{\plane}(n+1) v_1^{n/(n+1)} + o_k(1).
	\end{equation}
	Let $v_{k,1} = |\X_k(1) \cap \hat{B}_k| \to v_1,$ so that the cluster $\X'_k$ above is an admissible competitor for $\tau_{\lambda, R_k}(v - v_{k,1})$. Here we adopt the convention that $\tau_{\lambda, R_k}(0) =0$, and in a slight abuse of notation consider a partition with empty $\X(1)$-chamber as an admissible competitor for this problem. Adding the term $\mathcal{G}_{R_k}(\X_k(1)) - \mathcal{H}^n(\Sigma\cap B_{4R_k})$ to both sides of \eqref{eqn: replaced cluster} and noting that $\mathcal{G}_{R_k}(\X'_k(1)) \leq \mathcal{G}_{R_k}(\X_k(1))$, we find 
	\begin{align*}
		\LT_{T_\lambda}(v) 
		&=\mathcal{E}_k(\X_k; B_{4R_k}) - \mathcal{H}^n(\Sigma\cap B_{4R_k}) +o_k(1) \\
		&\geq  \Lambda_{\plane}(n+1) v_1^{n/(n+1)} + \mathcal{E}_k(\X_k';B_{4R_k}) - \mathcal{H}^n(\Sigma\cap B_{4R_k}) + o_k(1)\\
		&\geq  \Lambda_{\plane}(n+1) v_1^{n/(n+1)} + [\tau_{R_k, \lambda}(v-v_k) - \mathcal{H}^n(\Sigma\cap B_{4R_k})] + o_k(1)\,.
	\end{align*}
	The term in brackets on the right-hand side tends to $\LT_{T_\lambda}(v-v_1)$ since $v\mapsto \tau_{R,\lambda}(v)$ is continuous in $v$, uniformly in $R$, a fact that can be readily verified through simple competitor arguments. Thus
	\begin{equation}
		\LT_{T_\lambda}(v)  \geq     \Lambda_{\plane}(n+1) v_1^{n/(n+1)} + \LT_{T_\lambda}(v-v_1),
	\end{equation}
	completing the proof in this case.\\
	
	\noindent{\it Case 2:} Now suppose $|\X_{\infty,1}(h)|<\infty$ for either $h=2$ or $3$. Without loss of generality we may assume this holds for $h=2.$ By Lemma~\ref{lem: limit local min}(iii), $\X_{\infty,1}(2)$ is empty, and in particular from the Hausdorff convergence, $\X_{k,1}(2) \cap B_{4R_0}(x_{k,1})$ is empty for $k$ sufficiently large. Notice that $\Lambda_{\plane}(n+1) < (n+1)\omega_{n+1}$ since the standard lens cluster is the unique local minimizer with planar area growth by \cite[Theorem 2.9]{BN}. In conjunction with 
	\eqref{eq:floater lower bound}, this means $\mathcal{P}(\X_{\infty,1}; B_{3R_0}) > \Lambda_{\plane}(n+1)v_1^{n/(n+1)}$, and thus, now letting $\hat{B}_k = B_{3R_0}(x_{k,1}),$
	\begin{equation}
		\mathcal{P}(\X_{k}; \hat{B}) > \Lambda_{\plane}(n+1)v_1^{n/(n+1)} + o_k(1).
	\end{equation}
	The cluster $\X'_k = (\X_k(1) \setminus \hat{B}, \X_k(2), \X_k(3) \cup \hat{B}_k)$ satisfies 
	\begin{equation}
		\mathcal{P}(\X_k'; B_{4R_k}) = \mathcal{P}(\X_k; B_{4R_k}\setminus \hat{B}_k) 
	\end{equation}
	Rearranging and summing these together yields
	\begin{equation}
		\mathcal{P}(\X_k; B_{4R_k} ) >  \Lambda_{\plane}(n+1)v_1^{n/(n+1)} +  \mathcal{P}(\X_k'; B_{4R_k}) +o_k(1).
	\end{equation}
	We now repeat the argument of case 1 starting directly after \eqref{eqn: replaced cluster} to conclude the proof in this case as well. 
\end{proof}

We are now in a position to prove Theorem \ref{thm:main thm}.

\begin{proof}[Proof of Theorem \ref{thm:main thm}]
	Fix $v_0>0$ small enough so that 
	\begin{equation}
		\label{eqn: v0 fix}
		\Lambda_{\rm plane}(n+1)\cdot (1-v_0)^{n/(n+1)}> \Lambda_{T_\lambda}>0
	\end{equation}
	for all $|\lambda|<\lambda_0$; this is possible thanks to \eqref{eqn: Lambda gap}. Fix $\lambda \in (-\lambda_0, \lambda_0)$ and take a sequence $R_k$ tending to $+\infty$. For each $k \in \mathbb{N}$, let $\X_k^1$ be a minimizer of the variational problem $\tau_{R_k,\lambda}(1)$ as defined in \eqref{eqn: tau lambda problem}.
	Let $\lambda_*$ be the constant associated to the sequence $\{\X_k^1\}$ as defined in \eqref{eqn: lambda star def}. Here we have tacitly passed to a subsequence without relabeling and will continue to do so in the remainder of the proof.
	
	Apply Lemma~\ref{lemma: dichotomy}. If $\lambda_*<+\infty$, {then the conclusions of the theorem are satisfied by the limiting cluster $\X$ of the first concentration. The decay rates \eqref{e:HS-slow-decay} and \eqref{e:HS-fast-decay} follow from the decay of the leaves $T_{\pm 2\lambda_*}$ of the foliation, see Remark~\ref{rmk: decay}. 
    } Suppose $\lambda_* = +\infty$. Then according to Lemma~\ref{lemma: dichotomy},
	\begin{equation}\label{eqn: first lower bound}
		\LT_{T_\lambda}(1) \geq \Lambda_{\plane} v_1^{n/(n+1)} + \LT_{T_\lambda}(1-v_1).
	\end{equation}
	By \eqref{eqn: v0 fix}, we see that $v_1\leq 1-v_0$, i.e. $1-v_1 \geq v_0$, hence we may run this argument again: let $\X_k^{1-v_1}$ be minimizers of $\tau_{R_k, \lambda}(1-v_1)$. Once again, take the $\lambda_*$ associated to this sequence as in \eqref{eqn: lambda star def}. By Lemma~\ref{lemma: dichotomy}, if $\lambda_*<+\infty$, the proof is complete. 
 If $\lambda_*=+\infty$, use the notation $v_2$ in place of $v_1$ to denote the volume of the first chamber of the limiting concentration $\X_{\infty,1}$ from this sequence. Then Lemma~\ref{lemma: dichotomy} together with  \eqref{eqn: first lower bound} and the concavity of $t\mapsto t^{n/(n+1)}$ shows
	\[
	\LT_{T_\lambda}(1) \geq \Lambda_{\plane} \left( v_1 + v_2\right)^{n/(n+1)} + \LT_{T_\lambda}(1-v_1-v_2).
	\]
	Again invoking \eqref{eqn: v0 fix}, we have $1-v_1-v_2 \geq 0$, allowing us to iterate the procedure again. In the $j$th iteration, the associated sequence of minimizers of $\tau_{R_k, \lambda}(1-v_1-\dots - v_{j-1})$ either satisfies $\lambda_*<+\infty$, in which case the proof is complete by Lemma~\ref{lemma: dichotomy}, or else $\LT_{T_\lambda}(1) \geq \Lambda_{\plane} \left( v_1 +\dots  v_j\right)^{n/(n+1)}$, which by \eqref{eqn: v0 fix} guarantees $v_1+\dots +v_j \leq 1-v_0.$  
	By Lemma~\ref{lem: vol lb}, we have $v_i \ge \bar{v}$ for each $i,$ hence we must have $\lambda_*<+\infty$ for the $j$th iteration of the argument for some $j \leq \lceil 1/\bar{v}\rceil$, thus allowing us apply Lemma \ref{lemma: dichotomy} and conclude {the existence of a minimizing cluster with blowdown $C$}. {Again, the decay estimates \eqref{e:HS-slow-decay} and \eqref{e:HS-fast-decay} are immediate consequences of the decay of the leaves of the foliation.}
\end{proof}

\begin{remark}\label{rmk: uniform rescaling}
    \rm{
    For each $\lambda \in [-\lambda_0,\lambda_0]$, the proof above produces a minimizing $(1,2)$-cluster $\X_\lambda$ with $|\X_\lambda(1)| =v_\lambda \geq \bar{v}$.   Up to rescaling by a factor $\rho_\lambda = |\X_\lambda(1)|^{-1/n} \leq \bar{v}^{-1/n}$, we may assume $|\X_\lambda(1)|=1.$

    A contradiction argument akin to the one in the proof of Lemma~\ref{lem: lower density} shows that the constant $c_0$ in the lower density estimate \eqref{e:lower-density-1}  can be taken to be uniform for all $T_\lambda$ with $\lambda \in [-\lambda_0,\lambda_0]$. In turn, the volume lower bound $\bar{v}>0$ for each concentration produced in this construction (coming from Lemma~\ref{lem: vol lb}) is uniform in $\lambda \in [-\lambda_0,\lambda_0].$ In particular the rescaling factor $\rho_\lambda$ has a uniform upper bound for all $\lambda \in [-\lambda_0,\lambda_0].$

    }
\end{remark}

\section{Proof of Theorem \ref{thm:main thm2}}\label{sec: leaf v cone}
In this section, we will be frequently working with the positive Jacobi fields $\phi_1(\xi) r^{-\gamma_1^\pm}$ from Section \ref{s:prelim} (or $\phi_1(\xi) r^{-\gamma_1}$ with $\gamma_1 = \tfrac{n-2}{2}$ if $C$ is strictly stable). To simplify notation, we will let
\begin{equation}\label{e:beta 2}
    \beta := n-2-2\gamma_1^- \geq 0\,,
\end{equation}
and we notice that $\gamma_1^+ + \gamma_1^- = n-2$, and thus $\gamma_1^+ - \gamma_1^- = \beta$.

For $0< s < r$, let $A_{s,r}$ denote the annulus {$B_r\setminus B_s$}. Given $u:C \cap \overline{A_{s,r}}\to \mathbb{R}$, we will write
    \begin{equation}\notag
      \graph_C^{s,r} u=\{x+u(x)\nu_C(x) : x\in C \cap A_{s,r} \}\,.  
    \end{equation}
    Note that the graph is not necessarily contained in the annulus. If $(s,r)=(0,\infty)$, we will omit the superscript and simply write $\graph_C$.

\begin{lemma}\label{l:area-gap-Plateau-leaf 2}
	Let $C$ be a strictly minimizing cone with an isolated singularity and let $\{T_\lambda\}_{\lambda\in \mathbb{R}}$ be the Hardt-Simon foliation given by Theorem \ref{t:Hardt-Simon}. Fix $\lambda \in \mathbb{R}$ and suppose there exist $R_0>0$, $q(r):(0,\infty) \to (0,\infty)$, and $a\in \mathbb{R}$ such that
\begin{equation}\label{eq:pure first mode hypothesis}
    T_\lambda \setminus B_{R_0} =\graph_C q(|x|)\phi_1(x/|x|)\,,
\end{equation}
with $q(r) = a r^{-\gamma_1^-} +\rm{o}(r^{-\gamma_1^-})$, and $\phi_1$ is as in Section \ref{subsec:iso sing prelim}. For $R_0 \leq \rho_0 < S < R/4$, suppose that we have a hypersurface $M = M_1 \sqcup M_2$ where $M_1 = \graph_C^{\rho_0,S} u$ for a function $u: C\cap A_{\rho_0,S} \to \R$ satisfying
\begin{equation}\label{eq:uk vanishes}
    \|u-\tilde{a}r^{-\gamma_1^-}\phi_1\|_{L^\infty(C \cap A_{{\rho_0},S})} + \|r(\nabla_C u-\nabla_C (\tilde{a}r^{-\gamma_1^-}\phi_1))\|_{L^\infty(C \cap A_{{\rho_0},S})}{=\mathrm{o}(S^{-\gamma_1^-})}
\end{equation}
 if $C$ is strictly stable, and
{\begin{equation}\label{eq:uk vanishes - non_str_stable}
    \|u-\tilde{a}r^{-\gamma_1^-}\log r\cdot\phi_1\|_{L^\infty(C \cap A_{{\rho_0},S})} + \|r(\nabla_C u-\nabla_C (\tilde{a}r^{-\gamma_1^-}\log r\cdot\phi_1))\|_{L^\infty(C \cap A_{{\rho_0},S})}{=\mathrm{o}(S^{-\gamma_1^-}\log S)}
\end{equation}}
 otherwise, while $\llbracket M_2 \rrbracket$ is {an area-minimizing integral current with $\partial \llbracket M_2 \rrbracket = \llbracket \graph_{C\cap \partial B_R} q\phi_1 \rrbracket + \llbracket \graph_{C\cap \partial B_S} u \rrbracket$}. If $\tilde{a} \neq a$, then
\begin{equation}\label{eq:area divergence}
    \mathcal{H}^n(M) - \mathcal{H}^n(\graph_C^{\rho_0,R}(q\phi_1))\to \infty
\end{equation}
as $S\to\infty$ and {$R/S\to \infty$}.
\end{lemma}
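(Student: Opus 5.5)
The plan is to compare areas by second-order expansion of the area functional about $C$. On $C$, the quadratic form is $Q(w)=\tfrac12\int_C(|\nabla_C w|^2-|A_C|^2w^2)$. For a radial Jacobi-type field $w=b(r)\phi_1$, integration by parts reduces $Q$ to the boundary flux $\tfrac12[r^{n-1}b\,b']$ evaluated across the spheres. The excess in \eqref{eq:area divergence} splits into two pieces:
\[
\bigl(\mathcal{H}^n(M_1)-\mathcal{H}^n(\graph_C^{\rho_0,S}(q\phi_1))\bigr)+\bigl(\mathcal{H}^n(M_2)-\mathcal{H}^n(\graph_C^{S,R}(q\phi_1))\bigr).
\]
I will estimate each piece to leading order in $S$ and show that their sum is bounded below by $c(\tilde a-a)^2S^{\beta}$. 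In the non-strictly-stable case ($\beta=0$) the analogous lower bound should be $c(\tilde a-a)^2\log S$ or a power of $\log S$. I treat the strictly stable case in detail and carry along $\log$ factors in the other case. Let $\gamma=\gamma_1^-$.

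\emph{Step 1 (inner annulus).} On $A_{\rho_0,S}$, the graphs of $u$ and $q\phi_1$ are $C^1$-close to $C$ with $|u|+r|\nabla u|=O(r^{-\gamma})$. The area expansion therefore gives $\mathcal{H}^n(\graph u)=\mathcal{H}^n(C\cap A)+Q(u)+O(\int r^{-3\gamma-2}\cdots)$, and the cubic error is lower order relative to $S^\beta$. Using \eqref{eq:uk vanishes}, I replace $u$ by $\tilde a r^{-\gamma}\phi_1$ at a cost of $\mathrm{o}(S^\beta)$ in $Q$. I also replace $q\phi_1$ by $a r^{-\gamma}\phi_1$; the leaf $T_\lambda$ is minimal, so its $\mathrm{o}(r^{-\gamma})$ correction enters only through boundary flux terms of size $\mathrm{o}(S^\beta)$. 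The exact Jacobi computation then gives the inner contribution
\[
-\tfrac{\gamma}{2}(\tilde a^2-a^2)S^{\beta}+\mathrm{o}(S^\beta)+O(1),
\]
where the $O(1)$ comes from $r=\rho_0$.

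\emph{Step 2 (outer Plateau piece).} This step is the main obstacle. I need to show that the minimizer $M_2$ is itself a graph $\graph_C^{S,R}w$ over $C$ with $|w|+r|\nabla w|\le \varepsilon r^{-\gamma}$-type smallness. The tools are: a barrier argument with the leaves $T_{\pm\mu}$, which trap $M_2$ between two leaves because of the boundary data, in the same spirit as Lemma \ref{lem: trapping} and its maximum-principle argument; then scaled Allard/De Giorgi regularity on dyadic annuli; and finally the asymptotic analysis of \cite{SimonSolomon}. Given graphicality, I expand $w$ in the eigenfunctions of the link and use the minimal surface equation as a perturbation of the Jacobi equation. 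The first-mode coefficient $b(r)$ solves $b''+\tfrac{n-1}{r}b'+\tfrac{\lambda_1}{r^2}b=$ (error), so $b=\alpha r^{-\gamma}+\beta' r^{-\gamma_1^+}+$ (error), with $b(S)\approx\tilde a S^{-\gamma}$ and $b(R)\approx aR^{-\gamma}$. Because $R/S\to\infty$, this forces $\alpha\to a$ and $\beta'\approx(\tilde a-a)S^{\beta}$. Higher modes only increase $Q$, since their eigenvalues exceed $\lambda_1$ and $Q$ restricted to them is coercive up to boundary terms, which are controlled by the boundary data. If graphicality cannot be established directly, the fallback is a weaker lower bound via calibration by the foliation normal plus a second-order flux correction.

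\emph{Step 3 (summing).} The flux formula on $[S,R]$ for $b$ gives the outer excess over $a r^{-\gamma}\phi_1$ as
\[
\tfrac{\gamma}{2}(\tilde a^2-a^2)S^\beta+\tfrac{\beta}{2}(\tilde a-a)^2S^\beta+\mathrm{o}(S^\beta).
\]
Here the first term cancels the inner term from Step 1, and the second comes from the $\gamma_1^+$ mode using $\gamma_1^+-\gamma_1^-=\beta$. The terms at $r=R$ are $\mathrm{o}(1)$ as $R/S\to\infty$. Summing the two pieces leaves $\tfrac{\beta}{2}(\tilde a-a)^2S^\beta+\mathrm{o}(S^\beta)\to\infty$ whenever $\tilde a\ne a$ and $\beta>0$. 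When $\beta=0$, the fundamental solutions are $r^{-\gamma}$ and $r^{-\gamma}\log r$, and the same bookkeeping with \eqref{eq:uk vanishes - non_str_stable} should produce a divergent positive multiple of $(\tilde a-a)^2$ times a power of $\log S$.
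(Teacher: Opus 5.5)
Your Step 1 matches the paper's treatment of the inner annulus. The genuine gap is in Steps 2--3, and it is not the graphicality of $M_2$; the paper obtains that in Lemma~\ref{lemma:small data dirichlet existence} with essentially the tools you list.

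\textbf{The gap.} You evaluate $\mathcal{H}^n(M_2)$ on its own, against the fixed Jacobi reference $ar^{-\gamma_1^-}\phi_1$, and treat the nonlinear terms as ``(error)''. On $A_{S,R}$, with $R/S\to\infty$ at an arbitrary rate, these errors sit at the outer scale $R$:
\begin{itemize}
\item the cubic remainder $\int_{C\cap A_{S,R}}Z_w^3$ can be of order $R^{n-3\gamma_1^--3}$;
\item the nonlinear part $O(r^{-2\gamma_1^--3})$ of the minimal surface operator feeds terms of the same order into your first-mode flux at $r=R$;
\item replacing the leaf $q\phi_1$ by $ar^{-\gamma_1^-}\phi_1$ on $A_{S,R}$ changes $Q$ by a term that is only $\mathrm{o}(R^\beta)$. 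Your Step 1 justification of that replacement works on $A_{\rho_0,S}$, not here.
\end{itemize}
Since $n-3\gamma_1^--3=\beta-1-\gamma_1^-$ is positive, for instance, for $C_{k,k}$ with $k\ge 4$, none of these terms is $\mathrm{o}(S^\beta)$. They must cancel between $M_2$ and the leaf, and your argument has no mechanism that produces this cancellation. This is exactly the issue flagged in Remark~\ref{r:pure-first-mode}.

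A smaller bookkeeping error: the $r=R$ terms are not $\mathrm{o}(1)$. With $b=\alpha r^{-\gamma_1^-}+\beta' r^{-\gamma_1^+}$, they contribute $-\tfrac{\beta}{2}a\beta'=-\tfrac{\beta}{2}a(\tilde a-a)S^\beta+\mathrm{o}(S^\beta)$ to the outer excess. Your final formula is correct only if this term is kept.

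\textbf{The missing idea} is never to compute the two outer areas separately. The inequality you need is a lower bound for the area of the minimizer $M_2$, and that cannot come from its own minimality. The paper proceeds in three moves.
\begin{enumerate}
\item It replaces the data of $M_2$ and of the leaf on $\partial B_S$ by the pure Jacobi data $\tilde aS^{-\gamma_1^-}\phi_1$ and $aS^{-\gamma_1^-}\phi_1$. It uses the competitors $\tilde u+W$ and $u-W$ with $\spt W\subset B_{2S}$, so only a boundary first-variation term and $\|Z\|_{L^\infty}Q(W,W)$ appear, both at scale $S$.
\item It compares the resulting minimal graphs $\tilde u,\tilde v$, which have identical data on $\partial B_R$. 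It tests the minimality of $\tilde u$ against $\tilde v+\Psi$, and the minimality of $\tilde v$ against $\tilde u-\Psi$; the latter gives the lower bound. Here $\Psi$ is the explicit Jacobi field with $\Psi=0$ on $\partial B_R$ and $\Psi=(\tilde a-a)S^{-\gamma_1^-}\phi_1$ on $\partial B_S$.
\item Because $L_C\Psi=0$, integration by parts gives the exact identity $Q(\tilde v+\Psi)-Q(\tilde v)=Q(V+\Psi)-Q(V)$, so the nonlinear structure of $\tilde u,\tilde v$ drops out entirely. Since $Z_\Psi\lesssim S^\beta r^{-\gamma_1^+-1}$, the only remainder left is $\int_{C\cap A_{S,R}}(Z_{\tilde v}+Z_\Psi)^2Z_\Psi\lesssim S^\beta\int_S^R r^{-2-\gamma_1^-}\,dr=\mathrm{o}(S^\beta)$, which is localized at scale $S$.
\end{enumerate}
Only after this reduction does the pure first-mode Jacobi flux computation equal the area difference up to $\mathrm{o}(S^\beta)$. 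That computation is your Step 3, but with the $r=R$ terms and the $\mathrm{o}(R^{-\gamma_1^-})$ deviation of $q(R)$ tracked explicitly.
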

\begin{remark}\label{r:pure-first-mode}
    The hypothesis \eqref{eq:pure first mode hypothesis} is required in order to gain improved error estimates on the area comparison between $M$ and $T_\lambda$ that are higher order in the smaller scale $S$ rather than the larger scale $R$. This is crucial, since the dominant term in the area comparison is of order $S^\beta$ in the strictly stable case and order $\log S$ in the non-strictly stable case. Within the proof of the lemma, this can be seen in the derivation of \eqref{e:rem-bd} and in step four.

    It is worth pointing out that the hypothesis \eqref{eq:pure first mode hypothesis} is not necessary in the specific case of $n=8$ and $\tilde a = 0$, since one may show that all higher order errors are $O(R^{n-3\gamma_1^- -3})$ and $\gamma_1^- \geq \frac{n-2}{2} - \sqrt{\left(\frac{n-2}{2}\right)^2 - (n-1)}$ so in particular $n-3\gamma_1^- -3 < 0$ when $n=7$, cf. the area expansion in Appendix \ref{ap:expansion} and step one of the proof, but we omit the details here since the conclusion is significantly stronger under hypothesis \eqref{eq:pure first mode hypothesis}.
\end{remark}

\begin{proof}
    {We perform the computation in the case when $C$ is in addition strictly stable, namely \eqref{e:strictly-min-strictly-stable} holds for $T_\lambda$. The only difference in the argument for the non-strictly stable case is that, in light of the amended asymptotic behavior \eqref{e:strictly-min-non-strictly-stable}, all terms of the form $S^\beta$ and $R^\beta$ become respectively $\log S$ and $\log R$, with error terms being of the order ${\rm o}(\log S)$. We therefore omit the details of the modifications in the non-strictly stable case and simply state the final estimate obtained in that case in the concluding step of the proof below.}

    Before giving the proof, which is divided into steps, let us set some notation. We will use polar coordinates $(r,\omega)$ for $x\in \mathbb{R}^{n+1}$. We set $Z_u(x):=|\nabla_C u(x)| + |u(x)|/|x|$, and write
    \[
        Q_{s,r}(u_1,u_2) := \int_{C\cap A_{s,r}} \langle \nabla_C u_1, \nabla_C u_2 \rangle - |A_C|^2 u_1 u_2 \, d\Hcal^n\,
    \]
for the second variation of $C$ restricted to an annulus. {We let $F_{s,r}(u) = \mathcal{H}^n(\graph_C^{s,r} u) - \mathcal{H}^n(C \cap A_{s,r})$. Note also that by Lemma \ref{lemma:small data dirichlet existence}, for $R/S$ large enough, $M_2$ is graphical over $C$, and so we can extend $u$ so that the entire surface $M$ is the graph of $u$.}

\medskip

\noindent{\it Step zero (expansion of area).} We claim that there are $\varepsilon_0, C_0>0$ such that if $\|Z_u\|_{L^\infty(C \cap A_{s,r})}\leq \varepsilon_0$, then 
\begin{equation}\label{eq:remainder estimate}
    F_{s,r}( u) = \frac{1}{2}Q_{s,r}(u,u)  + \mathcal{R}_{s,r}(u)\,,
\end{equation}
where the remainder $\mathcal{R}_{s,r}(u)$ satisfies
\begin{equation}\label{e:remainder-cubic-bd}
   |\mathcal{R}_{s,r}(u)| \leq C_0 \int_{C \cap A_{s,r}}Z_u^3\,d\mathcal{H}^n\,,
\end{equation}
and for any smooth, compactly supported $\psi: C \cap A_{s,r}\to \mathbb{R}$, the first variation $\delta \mathcal{R}_{s,r}(u)$ of the remainder at $u$ satisfies
\begin{equation}\label{eq:remainder first variation estimate}
    |\delta \mathcal{R}_{s,r}(u)[\psi]|\leq C_0\int_{C \cap A_{s,r}}Z_u^2 Z_\psi\,d\mathcal{H}^n\,.
\end{equation}
{Lastly, up to decreasing $\varepsilon_0$ and increasing $C_0$, we claim that
\begin{equation}\label{eq:closeness of second variations}
    \left|\delta^2 F_{s,r}(u)[v,w] - Q_{s,r}(v,w) \right| \leq C_0 \|Z_u\|_{L^\infty}\left(\int_{C \cap A_{s,r}}|\nabla_C v|^2 + \frac{v^2}{r^2} \right)^{1/2}\left(\int_{C \cap A_{s,r}}|\nabla_C w|^2 + \frac{w^2}{r^2} \right)^{1/2}\,.
\end{equation}
The proofs of these claims can be found in Appendix \ref{ap:expansion}.}

\medskip

\noindent{\it Step one (area difference on $A_{{\rho_0},S}$)}. Here we claim that 
\begin{equation}\label{eq:area difference on first annulus}
\mathcal{H}^n(M\cap A_{\rho_0,S}) -\mathcal{H}^n(T_{\lambda }\cap A_{\rho_0, S})    = \frac{1}{2}[a^2 - \tilde{a}^2]\gamma_1^-S^\beta + \mathrm{o}(S^\beta)\,.
\end{equation}
Let $L_C:= \Delta_C + |A_C|^2$ denote the Jacobi operator on $C$. Thanks to step zero combined with an integration by parts,  we may write
\begin{align*}
 \mathcal{H}^n(M\cap A_{\rho_0,S}) -\mathcal{H}^n(T_{\lambda }\cap A_{\rho_0, S}) 
    &= \frac{1}{2} Q_{{\rho_0},S}(u,u) - \frac{1}{2} Q_{{\rho_0},S}(v,v) + \Rcal_{{\rho_0},S}(u) - \Rcal_{{\rho_0},S_k}(v) \\
    &= \frac{1}{2} \int_{C\cap A_{{\rho_0},S}} \big(v L_C v - u L_C u\big)\, d\Hcal^n \\
    &\qquad+ \frac{1}{2} \int_{C\cap \partial B_{S}}( u \partial_r u  -  v \partial_r v)\, d\Hcal^{n-1} + C_1({\rho_0}) \\
    &\qquad+ \Rcal_{{\rho_0},S}(u) - \Rcal_{{\rho_0},S}(v)\,.
\end{align*}
Now, \eqref{eq:uk vanishes} and the definition of $v$ guarantee that
\begin{equation}\label{e:step1-error1}
    \Rcal_{{\rho_0},S}(u) = O(S^{n-3\gamma_1^- -3}) = o(S^\beta) \qquad \text{and} \qquad \Rcal_{{\rho_0},S}(v) = O(S^{n-3\gamma_1^- -3}) = o(S^\beta) \,.
\end{equation}
Moreover, by elliptic estimates we have
\begin{equation}\label{e:step1-error2}
     L_C (v) = o(S^{-\gamma_1^- -2})\qquad \text{and} \qquad L_C u = o(S^{-\gamma_1^- -2})\,,
\end{equation}
and
\begin{equation}\label{e:step1-error3}
    \partial_rv = -a\gamma_1^- S^{-\gamma_1^- -1} \phi_1 + o(S^{-\gamma_1^- -1})\qquad \text{and} \qquad \partial_r u = - \tilde{a} \gamma_1^- S^{-\gamma_1^- -1}\phi_1 + o(S^{-\gamma_1^- -1})\,.
\end{equation}
Inserting \eqref{e:step1-error1}-\eqref{e:step1-error3} into the calculation above and applying the coarea formula yields \eqref{eq:area difference on first annulus} as desired.

\medskip

\noindent{\it Step two (reduction of area estimate on $A_{S,R}$ to case with Jacobi field data on $C \cap \partial B_S$)}. 
Consider the Plateau problem {(again in the sense of integral currents)} with boundary data given by the graph of $\tilde{a}S^{-\gamma_1^-}\phi_1$ on $C \cap \partial B_S$ and given by $T_\lambda$ on ${C \cap \partial B_{R}}$. By Lemma~\ref{lemma:small data dirichlet existence}, the solution is a normal graph of a function $\tilde{u}$ on $C\cap A_{S,R}$. Similarly, let $\tilde{v}$ be the function whose graph solves the Plateau problem with boundary data given by the graph of $aS^{-\gamma_1^-}\phi_1$ on $C \cap \partial B_S$ and given by $T_\lambda$ on ${C \cap \partial B_{R}}$. Note that 
\begin{equation}\label{eq:uk tilde uk estimate}
    Z_{\tilde{u}} + Z_{\tilde{v}} = {\rm O}(|x|^{-\gamma_1^- - 1})\,.
\end{equation}
by Lemma~\ref{lemma:small data dirichlet existence}. In this step we show that
\begin{align}\label{eq:reduction to inner jacobi data tilde}
    {F}_{S,R}(u) = {F}_{S,R}(\tilde{u}) + \mathrm{o}(S^\beta)\qquad \mbox{and}\qquad
    {F}_{S,R}(v) &= {F}_{S,R}(\tilde{v}) + \mathrm{o}(S^\beta)\,.
\end{align}
We prove the first estimate in \eqref{eq:reduction to inner jacobi data tilde}; the second is the same. We first show that 
\begin{equation}\label{eq:inner jacobi upper bound}
    {F}_{S,R}(u) \leq {F}_{S,R}(\tilde{u}) + \mathrm{o}(S^\beta)
\end{equation}
 Let $H= (u-\tilde{u})|_{C\cap \partial B_S}$, which has $\|H\|_{C^1(C\cap \partial B_S))} = o(S^{-\gamma_1^-})$ by \eqref{eq:uk vanishes}. By an elementary extension argument, we may extend $H$ to some $W \in W^{1,\infty}(C;\mathbb{R})$ such that
\begin{equation}\label{eq:wk estimate}
    \spt\, W \subset C \cap B_{2S}\,, \quad \|W\|_{L^\infty(C)}=\mathrm{o}(S^{-\gamma_1^-})\,,\quad \mbox{and}\quad \|\nabla_C W\|_{L^\infty(C \cap B_{2S}\setminus B_{S})}=\mathrm{o}(S^{-\gamma_1^- -1})\,.
\end{equation}
Since $W$ agrees with $H$ on $\partial B_{S}$, ${\rm graph}_C^{S,R}(\tilde{u} + W)$ is an admissible competitor for the Plateau problem corresponding to $u$. Thus, Taylor expanding and using \eqref{eq:closeness of second variations} and the fact that the second fundamental form $A_C$ of $C$ satisfies $|A_C| \simeq |x|^{-1}$ on $C \setminus \{0\}$, we obtain
\begin{align}\
   {F}_{S,R}(u)&\leq F_{S,R}(\tilde{u}+W)\\ 
    &\leq F_{S,R}(\tilde{u})+ \delta F_{S,R}(\tilde{u})[W] + C_0 \|Z_{v}\|_{L^\infty}{Q}_{S,R}(W,W)   \label{eq:fkvk expansion}
\end{align}
Define $G:C \cap \partial B_{S}\to \mathbb{R}^{n+1}$ by $G(x) = x+ \tilde{u}(x)\nu_C(x)$. Since $N:=\graph_C^{S,R} \tilde{u}$ is a minimal surface with boundary and $\spt\,W\subset B_{2S} \cc B_{R}$, we have
\begin{equation}\notag
   \delta F_{S,R}(\tilde{u})[W]= \int_{G(C \cap \partial B_{S})}(W\circ G^{-1}) (\nu_C \circ G^{-1}) \cdot \nu_{N}^{\rm co}\,d\mathcal{H}^{n-1}\,,
\end{equation}
{where $\nu_{N}^{\co}$ denotes the unit conormal to $G(C \cap \partial B_{S})$ in $G(C \cap B_{S})$.} Towards estimating the dot product $(\nu_C\circ G^{-1}) \cdot \nu_{N}^{\rm co}$, we recall that  since  $|\nabla_C \tilde{u}| \leq Z_{\tilde{u}}=\mathrm{O}(S^{-\gamma_1^--1})$ on $C \cap \partial B_{S}$. {Again using that $|A_C| \simeq |x|^{-1}$ on $C \setminus \{0\}$}, a direct computation of $|(\nu_C\circ G^{-1}) \cdot \nu_{N}^{\rm co}|$ shows that it is bounded by $(1+{\rm o}(1))|\nabla_C \tilde{u}|$, yielding
\begin{equation}\notag
    (\nu_C\circ G^{-1}) \cdot \nu_{N}^{\rm co} = {\rm O}(S^{-\gamma_1^--1})\qquad \mbox{on $G(C \cap \partial B_{S})$}\,.
\end{equation}
Using this together with the fact that $\mathcal{H}^{n-1}(G(C \cap \partial B_{S}))={\rm O}(S^{n-1})$ and the $L^\infty$ bound for $W$ from \eqref{eq:wk estimate}, we obtain
\begin{equation}\label{eq:delta fsr estimate}
    \delta F_{S,R}(\tilde{u})[W]={\rm O}(S^{n-1-\gamma_1^--1}){\rm o}(S^{-\gamma_1^-})={\rm o}(S^{n-2-2\gamma_1^-})={\rm o}(S^\beta)\,.
\end{equation}
Similarly, using the estimate \eqref{eq:uk tilde uk estimate} for $Z_{\tilde{u}}$, the estimate \eqref{eq:wk estimate} for $W$ and $\nabla_C W$, and the fact that $\spt\, W\subset C \cap B_{2S}$, we have
\begin{equation}\label{eq:delta squared fsr estimate}
    \|Z_{\tilde{u}}\|_{L^\infty}{Q}_{S,R}(W,W) = {\rm O}(S^{-\gamma_1^--1 + n - 2\gamma_1^- - 2})={\rm o}(S^\beta)\,.
\end{equation}
Inserting \eqref{eq:delta fsr estimate} and \eqref{eq:delta squared fsr estimate}
into \eqref{eq:fkvk expansion} yields \eqref{eq:inner jacobi upper bound}.

To finish the proof of \eqref{eq:reduction to inner jacobi data tilde}, it remains to prove the reverse inequality 
\begin{equation}\label{eq:inner jacobi lower bound}
   F(\tilde{u})-{F}(u) \leq \mathrm{o}(S^\beta)\,.
\end{equation}
The proof is nearly identical to the proof of \eqref{eq:inner jacobi upper bound}, so we provide a summary instead of complete details. First, using the area minimality of the graph of $\tilde{u}$, we have the analogue 
\begin{align*}\notag
   F(\tilde{u})&\leq F_{S,R}( u-W)\leq {F}_{S,R}(u)- \delta F_{S,R}(u)[W] + C_0 \|Z_{u}\|_{L^\infty}{Q}_{S,R}(W,W)
\end{align*}
of \eqref{eq:fkvk expansion}. The first and second variation estimates then follow exactly as in \eqref{eq:delta fsr estimate} and \eqref{eq:delta squared fsr estimate}, since $u$ satisfies the same decay estimates as $\tilde{u}$. This concludes the proof of \eqref{eq:inner jacobi lower bound} and thus the proof of \eqref{eq:reduction to inner jacobi data tilde}.

\medskip

\noindent{\it Step three (equivalence of area estimate on $A_{S,R}$ with inner Jacobi field data to estimate of second variations)}.
Next, let $U$ and $V$ solve the Jacobi equation $L_C \ \cdot =0$ on $C\cap A_{S,R}$ with boundary data equal to $\tilde{u}$ and $\tilde{v}$ respectively on $C\cap (\partial B_S\cup \partial B_R)$. In this step we show
\begin{align}
F_{S,R}(\tilde{u})&-{F}_{S,R}(\tilde{v})  \label{eq:equivalence to quadratic energies}
    =  {\frac{1}{2}}{Q}_{S,R}(U,U)-{\frac{1}{2}}{Q}_{S,R}(V,V)+ \mathrm{o}(S^\beta)\,.
\end{align}
By symmetry, since $a$ and $\tilde{a}$ are arbitrary, to show \eqref{eq:equivalence to quadratic energies} it is enough to show the inequality 
\begin{align}
 F_{S,R}(\tilde{u})&-{F}_{S,R}(\tilde{v})   \label{eq:equivalence to quadratic energies 2}
    \leq{\frac{1}{2}}{Q}_{S,R}(U,U)-{\frac{1}{2}}{Q}_{S,R}(V,V)+ \mathrm{o}(S^\beta)\,;
\end{align}
the opposite inequality is the exact same. 

Set $\Psi= U - V$,
which by linearity solves the Jacobi equation
\begin{equation}\label{eq:pde for psi}
L_C\Psi= 0 \text{ on }C \cap A_{S,R}, \qquad
  \restr{\Psi}{C \cap \partial B_{S}} = (\tilde{a}-a)S^{-\gamma_1^-}\phi_1\,, \qquad \restr{\Psi}{C \cap \partial B_{R}} = 0\,.
\end{equation}
Since $\tilde{v}+\Psi = \tilde{u}$  on $C \cap (\partial B_{S}\cup \partial B_{R})$,
we can test the area minimality of the graph of $\tilde{u}$ against the graph of $\tilde{v}+ \Psi$, so that,
using the expansion of the area functional over $C$ from step zero, we obtain
\begin{align}\notag
F_{S,R}(\tilde{u}) - F_{S,R}(\tilde{v}) &\leq F_{S,R}(\tilde{v}+ \Psi)- F_{S,R}(\tilde{v})\\ \label{eq:tested tilde uk}
&= \frac{1}{2} Q_{S,R}(\tilde{v} + \Psi, \tilde{v} + \Psi) -\frac{1}{2}Q_{S,R}(\tilde{v},\tilde{v})+ \mathcal{R}_{S,R}(\tilde{v}+ \Psi,\tilde{v}+ \Psi) - \mathcal{R}_{S,R}(\tilde{v},\tilde{v})  \,.
\end{align}
Next, we claim that
\begin{equation}\label{eq:q fiddling}
    Q_{S,R}(\tilde{v} + \Psi, \tilde{v} + \Psi) -Q_{S,R}(\tilde{v},\tilde{v}) = Q_{S,R}(V+ \Psi,V+ \Psi) -Q_{S,R}(V,V)\,.
\end{equation}
Indeed, we expand the square and integrate by parts, recalling that $\Psi$ solves the Jacobi equation $L_C\Psi=0$ and vanishes on $C\cap \partial B_R$ to find \begin{align}\notag
Q_{S,R}(\tilde{v}+ \Psi, \tilde{v}+\Psi) -Q_{S,R}(\tilde{v}, \tilde{v}) &= Q_{S,R}(\Psi,\Psi) + 2Q_{S,R}({\tilde v},\Psi)\\ \notag
&= -\int_{\partial B_S\cap C} \Psi \partial_r\Psi + 2 \Big(\int_{\partial B_R\cap C} \tilde{v} \partial_r \Psi - \int_{\partial B_{S}\cap C} \tilde{v} \partial_r\Psi \Big)
\end{align}
Since $\tilde{v}=V$ on $C\cap (\partial B_S\cup\partial B_R)$, the expansion with $V$ in place of $\tilde{v}$ yields the identical right-hand side.
As a consequence, inserting \eqref{eq:q fiddling} into \eqref{eq:tested tilde uk} and recalling that $\Psi +V = U$ gives 
\begin{align}
F_{S,R}(\tilde{u}) - F_{S,R}(\tilde{v}) &\leq \frac{1}{2}Q_{S,R}(U, U) -\frac{1}{2}Q_{S,R}(V,V)+ \mathcal{R}_{S,R}(\tilde{v}+ \Psi) - \mathcal{R}_{S,R}(\tilde{v})\label{eq:equality up to remainders}
\end{align}
So, to finish proving \eqref{eq:equivalence to quadratic energies 2}, it remains to show that
\begin{equation}\label{e:rem-bd}
    \mathcal{R}_{S,R}(\tilde{v} + \Psi) - \mathcal{R}_{S,R}(\tilde{v}) = {\rm o}(S^\beta)\,.
\end{equation}
With this aim in mind, we begin by computing $\Psi$ using \eqref{eq:pde for psi}. Since the boundary data for $\Psi$ involves only the $\phi_1$ mode, $\Psi$ is a linear combination of the Jacobi fields corresponding to $\gamma_1^-$ and $\gamma_1^+$. A direct calculation which we omit that uses the boundary conditions to solve for these coefficients yields the following expression for $\Psi$ in polar coordinates:
\begin{equation}\notag
  \Psi(r,\omega) = \frac{S^\beta}{1-S^\beta/R^\beta}(\tilde{a}-a)(r^{-\gamma_1^+}- R^{-\beta} r^{-\gamma_1^-})\phi_1(\omega)\,.  
\end{equation}
Since $2S \leq R$, {recalling also that $-\gamma_1^- - \beta = -\gamma_1^+$}, this implies
\begin{equation}\notag
  |\Psi| \leq C S^\beta|\tilde{a}-a|\left(|r^{-\gamma_1^+}| + r^{-\beta}|r^{-\gamma_1^-}|\right)\|\phi_1\|_{L^\infty} \leq C S^\beta {r^{-\gamma_1^+}} \,.
\end{equation}
Similarly, differentiating $\Psi$ leads to the bound $ |\nabla_C \Psi| \leq C S^\beta r^{-{\gamma_1^+}-1}\,$
which, combined with the previous inequality shows
\begin{equation}\label{eq:zpsi bound}
    Z_{\Psi}\leq C S^\beta r^{-{\gamma_1^+}- 1}\,.
\end{equation}
Using the fundamental theorem of calculus, \eqref{eq:remainder estimate}, the triangle inequality, and the inequality $Z_{t\Psi} \leq Z_\Psi$ for $t\in [0,1]$, we estimate
\begin{align}\notag
  \left|\mathcal{R}_{S,R}(\tilde{v}+ \Psi) - \mathcal{R}_{S,R}(\tilde{v}) \right|&\leq \int_0^1 |\delta \mathcal{R}_{S,R}(\tilde{v} + t \Psi)[\Psi]|\, dt \leq \int_0^1 \int_{C \cap A_{S,R}}(Z_{\tilde{v} + t \Psi})^2 Z_\Psi\, d\Hcal^n \,dt \\ \notag
  &\leq \int_{C \cap A_{S,R}} (Z_{\tilde{v}} + Z_\Psi)^2 Z_\Psi\, d\Hcal^n = \int_{C \cap A_{S,R}} Z_{\tilde{v}}^2 Z_\Psi + 2Z_{\tilde{v}}Z_\Psi^2+ Z_\Psi^3\, d\Hcal^n  \,.
\end{align}
Recalling the bounds $Z_{\tilde{v}} = {\rm O}(r^{-\gamma_1^--1})$ (from \eqref{eq:uk tilde uk estimate}) and $Z_\Psi \leq C S^\beta r^{-\gamma_1^+- 1}$ (from \eqref{eq:zpsi bound}), {and the identities $\gamma_1^+ + \gamma_1^- = n-2$ and $\gamma_1^+ - \gamma_1^- = \beta$}, we estimate
\begin{align}\notag
    \left|\mathcal{R}_{S,R}(\tilde{v} + \Psi) - \mathcal{R}_{S,R}(\tilde{v}) \right|&\lesssim  \int_{S}^{R} {S^\beta} r^{n-1}[r^{-2\gamma_1^- - 2 - \gamma_1^+- 1} + S^\beta r^{-\gamma_1^- - 1 - 2\gamma_1^+ - 2}+ {S^{2\beta}} r^{-3\gamma_1^+ - 3}] \, dr \\ \notag
    &\leq S^\beta \int_{S}^{R} {r^{-2 - \gamma_1^-}}\,dr = {\rm o}(S^\beta)\,,
\end{align}
This concludes the remainder bound and thus the proofs of \eqref{eq:equivalence to quadratic energies 2} and \eqref{eq:equivalence to quadratic energies}.

\medskip

\noindent{\it Step four (computation of second variation on $A_{S,R}$ with inner Jacobi data)}. In this step we prove that
\begin{align}
   Q_{S,R}(U,U)&-Q_{S,R}(V,V) \label{eq:computation of outer jacobi difference}
    =\left[\gamma_1^-(\tilde{a}^2 - a^2) + \frac{\beta (a-\tilde{a})^2}{1-(S/R)^{\beta}}\right]S^\beta + \mathrm{o}(S^\beta)\,.
\end{align}
Integrating by parts and using the fact that $U$ and $V$ solve the Jacobi equation in $A_{S,R}$ and agree on $\partial B_{R}$, we first notice that
\begin{align*}
   Q_{S,R}(U,U)-Q_{S,R}(V,V)  
    &=\int_{C \cap \partial B_{R}} U (\partial_{r} U - \partial_{\nu} V) - \int_{C \cap \partial B_{S}}U\partial_{r} U\, d\Hcal^{n-1}  + \int_{C \cap \partial B_{S}} {V} \partial_{r} {V}\, d\Hcal^{n-1} \\
    &=\int_{C \cap \partial B_{R}} U (\partial_{r} U - \partial_{\nu} V)\, d\Hcal^{n-1} - \tilde a\int_{C \cap \partial B_{S}} S^{-\gamma_1^-} \phi_1 \partial_{r} U\, d\Hcal^{n-1} \\
    &\qquad + a \int_{C \cap \partial B_{S}} S^{-\gamma_1^-}\phi_1 \partial_{r} {V}\, d\Hcal^{n-1}
\end{align*}
Now, we may expand $V$ and $U$ in polar coordinates as
\begin{equation}
	    \label{eqn: g expansion}
	V(r,\omega) = \sum_{j =1}^\infty (a_j \phi_j(\omega) r^{-\gamma_j^-} + b_j \phi_j(\xi) r^{-\gamma_j^+}) \,,\qquad U(r,\omega) = \sum_{j=1}^\infty (\tilde a_j \phi_j(\omega) r^{-\gamma_j^-} + \tilde b_j \phi_j(\xi) r^{-\gamma_j^+})
		\end{equation}
	where $\{\phi_j\}_j$ are a basis of eigenfunctions (with $\phi_1$ as before) of $\Delta_{C\cap\partial B_1} + |A_{C\cap\partial B_1}|^2$ with  $\int_{C\cap \partial B_1} \phi_j^2 = 1$ and eigenvalues $\lambda_j \to \infty$ as $j\to \infty$, and $\gamma_j^\pm = \frac{n-2}{2} \pm \sqrt{\left(\frac{n-2}{2}\right)^2 + \lambda_j}$ (cf. Section \ref{s:prelim}), and since the boundary data of $V$ and $U$ lie in the span of $\phi_1$ alone, the coefficients $a_j, b_j, \tilde a_j, \tilde b_j$ satisfy 
    \[
        a_j = b_j = \tilde a_j =\tilde b_j = 0 \qquad \text{for $j \geq 2$\,,}
	\]
    while
    \[
	\begin{cases}
		a_1 R^{-\gamma_1^-} + b_1 R^{-\gamma_1^+} = {a} R^{-\gamma_1^-} {+ \sigma} \\
		a_1 S^{-\gamma_1^-} + b_1 S^{-\gamma_1^+} = a S^{-\gamma_1^-} \,.
	\end{cases}
    \]
    and
    \[
	\begin{cases}
		\tilde a_1 R^{-\gamma_1^-} + \tilde b_1 R^{-\gamma_1^+} = {a} R^{-\gamma_1^-} {+ \sigma}\\
		\tilde a_1 S^{-\gamma_1^-} + \tilde b_1 S^{-\gamma_1^+} = \tilde a S^{-\gamma_1^-} \,.
	\end{cases}
	\]
    for a constant $\sigma$ which is $o(R^{-\gamma_1^-})$. This yields 
    \begin{align*}
        a_1 &= a + \sigma \frac{R^{\gamma_1^+}}{R^\beta - S^\beta}\,, \qquad b_1 = -\sigma\frac{R^{\gamma_1^-}R^\beta S^\beta}{R^\beta - S^\beta}\\
        \tilde a_1 &= \frac{a R^\beta - \tilde a S^\beta}{R^\beta - S^\beta} + \sigma \frac{R^{\gamma_1^+}}{R^\beta - S^\beta}\,, \qquad \tilde b_1 = \frac{(\tilde a-a -\sigma R^{\gamma_1^-})R^\beta S^\beta}{R^\beta - S^\beta}\,.
	\end{align*}
    In other words,
    \begin{align*}
        V(r,\omega) &= \left[\left(a + \sigma \frac{R^{\gamma_1^+}}{R^\beta - S^\beta}\right) r^{-\gamma_1^-} - \sigma\frac{R^{\gamma_1^-}R^\beta S^\beta}{R^\beta - S^\beta} r^{-\gamma_1^+}\right] \phi_1(\omega)\\
        U(r,\omega) &= \left[\left(\frac{a R^\beta - \tilde a S^\beta}{R^\beta - S^\beta} + \sigma \frac{R^{\gamma_1^+}}{R^\beta - S^\beta}\right) r^{-\gamma_1^-} + \frac{(\tilde a- a -\sigma R^{\gamma_1^-})R^\beta S^\beta}{R^\beta - S^\beta} r^{-\gamma_1^+}\right] \phi_1(\omega) \,.
    \end{align*}
    Let us now insert this into the above computation for the difference between $Q_{S,R}(U,U)$ and $Q_{S,R}(U,U)$. Noticing that $U$ and $V$ share the same $\sigma$ terms, we observe that the corresponding terms in $\partial_{r} U- \partial_{r} V$ cancel, and so we are left with
    \begin{align*}
        \int_{C \cap \partial B_{R}} &V (\partial_{r} U - \partial_{r} V)\, d\Hcal^{n-1} \\
        &= (a R^{-\gamma_1^-} + \sigma) \left(-\gamma_1^-\frac{a R^\beta - \tilde a S^\beta}{R^\beta - S^\beta} R^{\beta+\gamma_1^-} - \gamma_1^+(\tilde a - a) \frac{R^\beta S^\beta}{R^\beta - S^\beta} R^{\gamma_1^-} + \gamma_1^- a R^{\beta+\gamma_1^-}\right) \\
        &= \beta (a R^{-\gamma_1^-} + \sigma)\frac{(a- \tilde a) S^\beta R^{\beta+\gamma_1^-}}{R^\beta - S^\beta}\,.
    \end{align*}
    Similarly,
    \begin{align*}
        a \int_{C \cap \partial B_{S}} S^{-\gamma_1^-} \phi_1 \partial_r V &= a \left[-\gamma_1^- \left(a + \sigma \frac{R^{\gamma_1^+}}{R^\beta - S^\beta}\right) S^{\beta} + \sigma \gamma_1^+ \frac{R^{\gamma_1^-}R^\beta S^\beta}{R^\beta - S^\beta}\right]
    \end{align*}
    and
    \begin{align*}
        -\tilde a \int_{C \cap \partial B_{S}} S^{-\gamma_1^-} \phi_1 \partial_r U &= \tilde a\left[\gamma_1^-\left(\frac{a R^\beta - \tilde a S^\beta}{R^\beta - S^\beta} + \sigma \frac{R^{\gamma_1^+}}{R^\beta - S^\beta}\right) S^{\beta} +\gamma_1^+ \frac{(\tilde a-a -\sigma R^{\gamma_1^-})R^\beta S^\beta}{R^\beta - S^\beta}\right] 
    \end{align*}
    We therefore arrive at
    \begin{align*}
        &  Q_{S,R}(U,U)-Q_{S,R}(V,V)  \\ \notag
        &= \left\{\beta (a R^{-\gamma_1^-} + \sigma)\frac{(a- \tilde a) S^\beta R^{\beta+\gamma_1^-}}{R^\beta - S^\beta} + a \left[-\gamma_1^- \left(a + \sigma \frac{R^{\gamma_1^+}}{R^\beta - S^\beta}\right) S^{\beta} + \sigma \gamma_1^+ \frac{R^{\gamma_1^-}R^\beta S^\beta}{R^\beta - S^\beta}\right]\right.\\ \notag
        &\qquad+\left. \tilde a\left[\gamma_1^-\left(\frac{a R^\beta - \tilde a S^\beta}{R^\beta - S^\beta} + \sigma \frac{R^{\gamma_1^+}}{R^\beta - S^\beta}\right) S^{\beta} +\gamma_1^+ \frac{(\tilde a-a -\sigma R^{\gamma_1^-})R^\beta S^\beta}{R^\beta - S^\beta}\right]\right\}\\
        &= \gamma_1^- (\tilde a^2 - a^2)S^\beta + \beta (a-\tilde a)^2  \frac{S^\beta}{1 - (S/R)^\beta} + 2\beta \sigma \frac{R^{\gamma_1^-}S^\beta}{1 - (S/R)^\beta} (a-\tilde a)
    \end{align*}
   Keeping in mind that $\sigma = o(R^{-\gamma_1^-})$, this exactly simplifies to \eqref{eq:computation of outer jacobi difference}.
\medskip

\noindent{\it Conclusion}. Combining  \eqref{eq:area difference on first annulus}, \eqref{eq:reduction to inner jacobi data tilde}, \eqref{eq:equivalence to quadratic energies}, \eqref{eq:computation of outer jacobi difference}, and the fact that $u_k=q\phi_1$ on $C \cap \partial B_{R_k}$, we compute
\begin{align}\notag
    \mathcal{H}^n(M) - \mathcal{H}^n({\graph_C^{\rho_0,R}(q\phi_1)}) &=\mathcal{H}^n({M_1}) - \mathcal{H}^n({\graph_C^{\rho_0,S}(q\phi_1)}) + {F}_{S,R}(u) - {F}_{S,R}(v)\\ \notag
    &= \frac{1}{2}(a^2 - \tilde{a}^2)\gamma_1^-S^\beta +  {F}_{S,R}(\tilde{u} ) -{F}_{S,R}(\tilde{v}) + \mathrm{o}(S^\beta)\\ \notag
    &= \frac{1}{2}(a^2 - \tilde{a}^2)\gamma_1^-S^\beta + \frac{1}{2}(Q_{S,R}(U,U) - Q_{S,R}(V,V) ) +{\rm o}(S_k^\beta)\\ \notag
    &= \frac{1}{2}(a^2 - \tilde{a}^2)\gamma_1^-S^\beta+\frac{1}{2}\left[\gamma_1^-(\tilde{a}^2 - a^2) + \frac{\beta (a-\tilde{a})^2}{1-(S/R)^{\beta}}\right]S^\beta+{\rm o}(S^\beta)\\ \notag
    &= \frac{1}{2}\frac{\beta (a-\tilde{a})^2}{1-(S/R)^{\beta}}S^\beta+{\rm o}(S^\beta)\to \infty
\end{align}
as $S\to \infty$ since $a\neq \tilde{a}$.

In the non-strictly stable case, as explained at the beginning of the proof, by the same reasoning we instead obtain
\[
    \mathcal{H}^n(M) - \mathcal{H}^n(\graph_C^{\rho_0,R}(q\phi_1)) = \frac{1}{2}\frac{\beta (a-\tilde{a})^2}{(1-\log S/\log R)} \log S +{\rm o}(\log S)\to \infty\,.
\]
\end{proof}

\begin{remark}
    Note that if $C$ is not strictly minimizing then the asymptotic behavior of $T_\lambda$ is instead given by \eqref{e:non-str-min}, so our final lower bound on the difference of areas would have $\gamma_1^+$ in place of $\gamma_1^-$, resulting in a lower bound on the difference of areas that will instead decay to zero as $S \to \infty$.
\end{remark}

\begin{proof}[Proof of Theorem \ref{thm:main thm2}]
The proof is divided into two steps. In the first, we construct locally minimizing $(1,2)$-clusters decaying to the cone at the same rate as $T_\lambda$ for every $|\lambda|$ sufficiently small, and with volumes uniformly bounded away from zero. In the second step, we argue that rescaling each of these so that the volume of the first chamber is one yields infinitely many distinct local minimizers (up to rigid motions).

\medskip

\proofstep{Step one: existence of infinite (not necessarily distinct) continuum of local minimizers with prescribed decay.}
Let $\lambda_0$ be as in section~\ref{sec: construction} and $\lambda \in [-\lambda_0, \lambda_0].$ {Let $q=ar^{-\gamma_1^-} + \rm{o}(r^{-\gamma_1^-})$ and $R_0>0$ be such that $T_\lambda$ satisfies \eqref{eq:pure first mode hypothesis}. We claim that for any such $\lambda$, there exists a locally minimizing $(1,2)$-cluster $\X_\lambda$ as in Theorem \ref{thm:main thm} with $\partial\X_\lambda(2)\cap \partial\X_\lambda(3)\cap B_\rho^c\subset \graph_C v$ where $v = q\phi_1 + o(r^{-\gamma_1^-})$. Suppose for a contradiction that this is not the case.} Repeating the argument of the proof of Theorem~\ref{thm:main thm}, we excise at most $\lceil 1/\bar{v}\rceil-1$ lens-like concentrations to arrive at the family of variational problems $\tau_{R_k, \lambda}(v)$ with $v\geq \bar{v}$ for which $\lambda_*<\infty$ (as defined in \eqref{eqn: lambda star def}) and whose  corresponding minimizers $\X_{\lambda,k}^v$ converge to the claimed  minimizing $(1,2)$-cluster $\X_\lambda$ of Theorem~\ref{thm:main thm}. There is a radius $\rho_0$ such that $\X_{\lambda,k}(1), \X_{\lambda}(1)\subset B_{\rho_0}.$

The exterior interfaces ${M}_k:= \partial \X_{\lambda,k}(2)\cap \partial \X_{\lambda,k}(3)$, and hence additionally the limiting exterior interface $M_\infty = \partial \X_\lambda(2) \cap \partial \X_\lambda(3)$, are trapped between the leaves $T_{\pm 2\lambda_*}$ of the foliation. In particular, ${M}_\infty \subset \graph_C u_\infty$ on $B_{\rho_0}^c$. Now, the decay $|u_\infty|\lesssim r^{-\gamma_1^-}$ together with \cite[Lemma 2.10 and Claim 3.3]{SimonSolomon}, both of which hold for exterior minimal surfaces with no modifications to the proof,
guarantee that there is a constant $\tilde{a} \in \R$ with $\tilde{a} \neq a$ such that 
\begin{equation}\label{eqn: u infinity asymptotics}
    u_\infty(r, \omega) = \tilde{a} \phi_1(\omega) r^{-\gamma_1^-} + {\rm o}(\gamma_1^-).
\end{equation}
Here we are using that $C_{k,l}$ is an area-minimizing quadratic cone in order to apply the results of \cite{SimonSolomon}, together with the contradiction assumption to say that $\tilde a \neq a$.

Therefore, by the local convergence of ${M}_k$ to $\graph_C u_\infty$ on $B_{\rho_0}^c$ (which can be improved to smooth convergence by {De Giorgi's/}Allard's {epsilon} regularity), there are $S_k\to \infty$ such that $S_k/R_k\to 0$  and {so that $M_k \subset \graph_C^{\rho_0,S_k} u_k$ and \eqref{eq:uk vanishes} holds for $u=u_k$ with $S=S_k$ for $k$ sufficiently large.}
Note that any area-minimizing quadratic cone is strictly minimizing and strictly stable, thanks to {\cite[Theorem 3.2, Remark 3.3]{HardtSimon84} and} \cite{SimonSolomon}*{Proposition 2.7} (see also \cite{EdSp}*{Section 2.5}). 
Moreover, the hypothesis \eqref{eq:pure first mode hypothesis} holds for all area-minimizing quadratic cones, since the works \cite{BDGG,DPP, Lawson72, Davini04} guarantee that the leaves of the foliation inherit the $SO(\Sbb^{k+1})\times SO(\Sbb^{l+1})$ symmetry of the cone asymptotically, which in particular implies the hypothesis, since $\phi_1$ is constant. 
Now, for each such $k$ let $M_{k,1}$ to be $\graph_C^{\rho_0,S_k}$ and let $M_{k,2}$ be an area-minimizing integral current with $\partial \llbracket M_{k,2} \rrbracket = \llbracket \graph_{C\cap \partial B_{7R_k/2}} q\phi_1 \rrbracket + \llbracket \graph_{C\cap \partial B_{S_k}} u_k \rrbracket$. In light of the above discussion, we may apply Lemma \ref{l:area-gap-Plateau-leaf 2} for $k$ sufficiently large to the surface $\tilde{M}_k := M_{k,1} \sqcup M_{k,2}$ with $R= 7R_k/2$, to obtain
 
	\begin{align*}\label{eq:SK estimate}
\LT_{T_\lambda}(v) &= \lim_{k\to \infty}	(\mathcal{E}_{R_k}(\X_{\lambda,k}) -\mathcal{H}^n(T_\lambda \cap B_{4R_k})) \\
 & \geq
 \lim_{k\to \infty}	(\mathcal{H}^n(\tilde{M}_k) -\mathcal{H}^n( \graph_C^{\rho_0,7R_k/2}(q\phi_1) ))- c_{\rho_0} \\
 & =+\infty
	\end{align*}
for some $\rho_0$-dependent constant $c_0$ (independent of $k$), where we have also used that 
\[
    (M_{k,2}\setminus \graph_C^{\rho_0,7R_k/2}(q\phi_1)) \cap B_{4R_k} = (T_\lambda\setminus \graph_C^{\rho_0,7R_k/2}(q\phi_1)) \cap B_{4R_k}\,.
\]
This gives the desired contradiction.
\medskip

\proofstep{Step two: existence of infinitely many distinct local minimizers with unit volume first chambers.} For $|\lambda|<\lambda_0$, let $\X_{\lambda}$ denote the locally minimizing $(1,2)$-cluster with exterior interface decaying to $C$ at the same leading order rate as $T_\lambda$. We define an equivalence relation $\sim$ on these clusters by saying $\X_{\lambda} \sim \X_{\lambda'}$ if there is $r>0$ such that, up to rigid motions, $\X_{\lambda}=r\X_{\lambda'}$. To finish the proof of the theorem, it suffices to show that there are infinitely many equivalence classes. Suppose for contradiction that this were not the case. Then there must exist non-zero $\lambda_j\to 0$, $r_j>0$, and $\lambda$ such that $r_j\X_{\lambda_j} = \X_{\lambda}$ for each $j$. By scaling,
\begin{equation*}
   r_j = \left(|\X_{\lambda}(1)|/|\X_{\lambda_j}(1)|\right)^{1/(n+1)} \,.
\end{equation*}
Since $|\X_{\lambda}(1)|\in [\overline{v},1]$ for all $|\lambda|<\lambda_0$ by Remark \ref{rmk: uniform rescaling} with $\overline{v}>0$, we have
\begin{equation}\label{eq:liminf rj bound}
   0<\limsup_{j\to \infty} r_j\leq  \limsup_{j\to \infty} r_j <\infty\,.
\end{equation}
On the other hand, let $a_j$ and $a$ denote the coefficients in front of $\phi_1 r^{-\gamma_1^-}$ in the leading order term of the graphical expansions of $\partial \X_j(2) \cap \partial \X_j(3)$ and $\partial \X(2) \cap \partial \X(3)$ respectively. Since the leading order decay of $\X_{\lambda_j}$ must match the leading order decay of $\X_{\lambda_i}$, we have
\begin{equation}\label{eq:aj equation}
    r_j a_j = a\,.
\end{equation}
If $a=0$, then \eqref{eq:aj equation} implies that $r_j=0$, contradicting \eqref{eq:liminf rj bound}, and if $a>0$, then \eqref{eq:aj equation} implies that $\limsup_{j\to \infty}r_j = \infty$, again contradicting \eqref{eq:liminf rj bound}. Thus our original assumption that there are finitely many equivalence classes cannot hold, and the proof is complete. 
\end{proof}

\section{Proof of Theorem \ref{t:even-dim-nonuniqueness}}
\subsection{$\Lambda_{C_{k,k}}$ estimate in all even dimensions $n+1 = 2k+2$}
To simplify notation slightly, we will henceforth set $m:=n+1\geq 8$, with $m$ an even integer. {Note that in our preceding work \cite{BNNS}, joint with Lia Bronsard, the ambient dimension is $n$ in place of $n+1$.} We recall from \cite{BNNS} that 
\begin{equation}\label{e:Lambda_plane}
\Lambda_{\text{plane}}(m) = \frac{2(m-1)\omega_{m-1} \int_{1/2}^1 \big(1-t^2\big)^{\frac{m-3}{2}}\,dt - \omega_{m-1} \left(\frac{\sqrt{3}}{2}\right)^{m-1}}{\left(2\omega_{m-1} \int_{1/2}^1 \big(1-t^2\big)^{\frac{m-1}{2}} \,dt \right)^{\frac{m-1}{m}}}
\end{equation}
With the substitution $t=\cos \theta$, we may rewrite the right-hand side of \eqref{e:Lambda_plane} as $\omega_{m-1}^{1/m}
\frac{(m-1)P_{m-2}-(\frac{\sqrt3}{2})^{m-1}}{P_m^{(m-1)/m}}$
where here and in the sequel we let
\begin{equation}\label{eqn: P def}
P_m:=2\int_0^{\pi/3}\sin^m\theta\,d\theta.
\end{equation}
Moreover, integrating by parts shows that we have the recursion $mP_m = (m-1)P_{m-2} - (\frac{\sqrt{3}}{2})^{m-1}$ and therefore 
\begin{equation}\label{eqn: lambda plane}
 \Lambda_{\text{plane}}(m)=m\,\omega_{m-1}^{1/m}P_m^{1/m}.
\end{equation}

Next, in \cite{BNNS} (see also the proof of the validity of \eqref{e:Lambda-cone-vs-Lambda-lens} for $C_{3,3}\times \R$ from Theorem \ref{thm:main thm} below) we constructed a competitor whose normalized energy (i.e. $D_m = M(k,k)$ for $2k+2=m$ with $M(k,l)$ as in \eqref{e:M(k,l)}) is  
\[
D_m :=
\omega_{m/2}^{2/m}\frac{\frac{m^2}{2}r
\int_1^{r-h}u^{(m-2)/2}\bigl(r^2-(u+h)^2\bigr)^{(m-4)/4}\,du-\frac{m^2\sqrt2}{4(m-1)}}
{\Big(1+m\int_1^{r-h}u^{(m-2)/2}\bigl(r^2-(u+h)^2\bigr)^{m/4}\,du\Big)^{(m-1)/m}}\,.
\]
Here \[
h:=1+\sqrt3,\qquad r:=\sqrt6+\sqrt2=\sqrt2\,h.
\]
We simplify this expression with the change of variables $u=hx$ to obtain
\[
D_m
=
\om_{m/2}^{2/m}\Big(\frac{m^2\sqrt2}{2}\Big)
\frac{
h^{m-1}\int_a^b x^{(m-2)/2}Q(x)^{(m-4)/4}\,dx-\frac{1}{2(m-1)}
}{
\Big(
1+m h^m
\int_a^b x^{(m-2)/2}Q(x)^{m/4}\,dx\Big)^{(m-1)/m}
}
\]
where we have introduced the shorthand notation
\begin{equation}
    \label{eqn: abQ}
a:=\frac1h=\frac{\sqrt3-1}{2},\qquad b:=\sqrt2-1,\qquad Q(x):=1-2x-x^2.
\end{equation}
As we did for $\Lambda_{\text{plane}}(m)$, we rewrite the numerator as the integral of a positive function. To this end, define the auxiliary functions
\begin{equation}\label{eqn: W and G}
W_m(x):=x^{(m-2)/2}Q(x)^{(m-4)/4}, \qquad
G_m(x):=1-\frac{mh}{4(m-1)}(3x+2x^2-1).
\end{equation}
Direct computation verifies that $(x^{m/2}Q(x)^{m/4})'
=
\frac{m}{2} W_m(x)(1-3x-2x^2),$ and $Q(a)=a^2$, $Q(b)=0$, so that
\[
a^m
=
\frac m2\int_a^b W_m(x)(3x+2x^2-1)\,dx.
\]
Recalling that $a =1/h$, the numerator of $D_m$  becomes
\begin{equation}
    \label{eqn: M def}
M_m:= h^{m-1}
\int_a^b W_m(x)G_m(x)\,dx,
\end{equation}
Finally, we introduce the shorthand 
\begin{equation}
    \label{eqn: L def}
L_m:=h^{m}\displaystyle\int_a^b x^{(m-2)/2}Q(x)^{m/4}\,dx
\end{equation}
for the integral in the denominator of $D_m$.
With this notation, $D_m$ takes the consolidated form
\begin{equation}\label{eqn: B}
D_m=
\om_{m/2}^{2/m}\left( \frac{m^2\sqrt2}{2}\right)
\frac{M_m}{(1+mL_m)^{(m-1)/m}}.
\end{equation}

Our goal is to show that $D_m <\Lambda_{\text{plane}}(m)$ in every even dimension $m$.  Recall that in \cite{BNNS}, we verified this computationally up to $m=n+1=2700$. Here we prove:

\begin{theorem}\label{thm: asymptotic inequality}
For every even $m\ge {2000}$, 
$\Lambda_{\text{plane}}(m)>D_m.$
\end{theorem}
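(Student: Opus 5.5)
The plan is to compare the two sides through their large-$m$ asymptotics, carried out with explicit error terms so that the inequality is certified for every even $m\ge 2000$. By \eqref{eqn: lambda plane} and \eqref{eqn: B}, both $\Lambda_{\text{plane}}(m)$ and $D_m$ factor as three pieces: a power of a unit-ball volume, a polynomial prefactor in $m$, and a $1/m$-th or $(m-1)/m$-th power of a one-dimensional integral.

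\textbf{Ball volumes.} I will treat $\omega_{m-1}^{1/m}$ and $\omega_{m/2}^{2/m}$ using $\omega_d=\pi^{d/2}/\Gamma(\frac d2+1)$ together with two-sided explicit Stirling bounds, e.g.
\[
\sqrt{2\pi}\,x^{x+1/2}e^{-x}\le\Gamma(x+1)\le\sqrt{2\pi}\,x^{x+1/2}e^{-x+1/(12x)} .
\]
This gives
\[
\omega_{m-1}^{1/m}=\Big(\frac{2\pi e}{m}\Big)^{1/2}\big(1+O(\tfrac{\log m}{m})\big),\qquad \omega_{m/2}^{2/m}=\Big(\frac{4\pi e}{m}\Big)^{1/2}\big(1+O(\tfrac{\log m}{m})\big),
\]
with every implicit constant written out.

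\textbf{Integrals via Laplace's method.} All three integrals $P_m$, $M_m$, $L_m$ are dominated by an endpoint.

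For $P_m$, the integrand $\sin^m\theta$ is increasing on $[0,\pi/3]$, so the mass sits near $\theta=\pi/3$. Since $\frac{d}{d\theta}\log\sin\theta=\cot\theta=1/\sqrt3$ there, I expect
\[
P_m=\frac{2\sqrt3}{m}\Big(\frac{\sqrt3}{2}\Big)^{m}\big(1+O(\tfrac1m)\big),\qquad\text{so}\qquad P_m^{1/m}=\frac{\sqrt3}{2}\big(1+O(\tfrac{\log m}{m})\big).
\]

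For $M_m$ and $L_m$, write $x^{(m-2)/2}Q^{(m-4)/4}=(x^2Q)^{m/4}\,x^{-1}Q^{-1}$. The function $x^2Q(x)$ has its interior critical point at $x=(\sqrt{17}-3)/4<a$, so it is decreasing on $[a,b]$ and maximized at $x=a$. There $Q(a)=a^2$, hence $h^m(a^2Q(a))^{m/4}=(ha)^m=1$, which exactly cancels the powers of $h$.

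Linearizing $\log(x^2Q)$ at $a$ gives a slope $-\kappa<0$ that I will compute explicitly. One also checks $3a+2a^2-1=a$, so $G_m(a)=1-\frac{m}{4(m-1)}\to\frac34$. This leads to
\[
M_m=\frac{4\,G_m(a)}{h\,\kappa\, m\,a\,Q(a)}\big(1+O(\tfrac1m)\big),\qquad mL_m=\frac{4}{\kappa\, a^2}\big(1+O(\tfrac1m)\big).
\]

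The key point is that both sides are of order $m^{1/2}$:
\[
\Lambda_{\text{plane}}(m)\approx\frac{\sqrt3}{2}\,(2\pi e)^{1/2}\,m^{1/2},\qquad D_m\approx c_*\, m^{1/2},
\]
where $c_*$ is an explicit constant built from $h,a,\kappa$. The first thing I will verify is the strict inequality of the leading constants $\frac{\sqrt3}{2}(2\pi e)^{1/2}>c_*$. This is a pure numerical check, and it is the reason the theorem should hold at all.

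\textbf{Quantitative errors (the main obstacle).} The hard part is making every $O(\cdot)$ term explicit enough that the relative error stays below the relative gap $1-c_*/\big(\tfrac{\sqrt3}{2}(2\pi e)^{1/2}\big)$ for all $m\ge2000$. I will handle the three sources of error as follows.

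\begin{itemize}
\item For the endpoint Laplace integrals I will use concavity or convexity of $\log(x^2Q)$ and of $\log\sin$ on the relevant interval. This traps the integrand between $e^{-\kappa m(x-a)/4}$ times explicit polynomial factors, on a window $[a,a+\delta]$ with $\delta\sim m^{-1/2}$, and bounds the tail by its maximum times the interval length.
\item The factors $x^{-1}Q^{-1}$ and $G_m$ are Lipschitz on $[a,b]$, so their variation over the window contributes $O(1/m)$.
\item The exponent $(m-1)/m$ in the denominator of \eqref{eqn: B} only produces a factor $(1+mL_m)^{1/m}=1+O(\log m/m)$.
\end{itemize}

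Collecting these bounds gives
\[
\frac{D_m}{\Lambda_{\text{plane}}(m)}\le\frac{c_*}{\tfrac{\sqrt3}{2}(2\pi e)^{1/2}}\Big(1+\frac{K\log m}{m}\Big)
\]
with an explicit constant $K$. The final step is to check that the right-hand side is $<1$ at $m=2000$; since it is decreasing in $m$, this covers all larger $m$. If the gap turns out to be tighter than the error budget allows, I would first sharpen the Laplace step by keeping the second-order term at the endpoint before increasing any thresholds.
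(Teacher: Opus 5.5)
There is a genuine gap. Your plan rests on the strict inequality $c_*<\frac{\sqrt3}{2}(2\pi e)^{1/2}$ between leading constants, and that inequality is false: the two constants coincide.

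Your formula for $mL_m$ contains a slip. The integrand of $L_m$ is $(x^2Q)^{m/4}x^{-1}$, with no factor $Q^{-1}$. Hence $mL_m\to \frac{4}{\kappa a}$, not $\frac{4}{\kappa a^2}$. With $\kappa=2h^2$ and $ha=1$ this gives $mL_m\to\sqrt3-1$, so $1+mL_m\to\sqrt3$; your formula would give $3$. Combining this with $M_m\sim \frac{2G_m(a)}{m}\to\frac{3}{2m}$ yields
\[
c_*=(4\pi e)^{1/2}\cdot\frac{\sqrt2}{2}\cdot\frac32\cdot\frac{1}{\sqrt3}=\frac{\sqrt3}{2}(2\pi e)^{1/2},
\]
which is exactly the constant in $\Lambda_{\text{plane}}(m)$.

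The $\frac{\log m}{m}$ corrections cancel as well. The factor $\omega_{m/2}^{2/m}$ in $D_m$ and the factor $P_m^{1/m}$ in $\Lambda_{\text{plane}}(m)$ each carry a relative factor $1-\frac{\log m}{m}+O(\frac1m)$, while $\omega_{m-1}^{1/m}$ carries none. The true margin is therefore
\[
\log\frac{\Lambda_{\text{plane}}(m)}{D_m}=\frac{1-\log\sqrt2}{m}+O(m^{-2})\approx\frac{0.65}{m}.
\]
Consequently a final bound of the form $D_m/\Lambda_{\text{plane}}(m)\le 1+\frac{K\log m}{m}$ can never be made $<1$. Even an $O(1/m)$ error budget would need its total constant to be below $0.65$.

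What is needed, and what the paper does, is a second-order computation of $\log(\Lambda_{\text{plane}}(m)/D_m)$. The $\log m$, constant and $\frac{\log m}{m}$ terms must be cancelled exactly, and every $\frac1m$ coefficient must be pinned down with explicit constants.
\begin{itemize}
\item For the ball volumes you need a one-sided bound with the exact $\frac{\log m}{m}$ and $\frac1m$ terms. The duplication formula together with Gautschi's inequality gives $\frac1m\log\frac{\omega_{m-1}}{\omega_{m/2}^2}\ge-\log\sqrt2+\frac{\log m}{m}-\frac{\log(2\sqrt2)}{m}$.
\item $P_m$ is only needed to relative accuracy $O(1/m)$, because it enters as $\frac1m\log P_m$.
\item $M_m$ and $1+mL_m$ need the $\frac1m$ terms of the endpoint Laplace expansion, namely $M_m=\frac{3}{2m}\bigl(1-\frac{7-4\sqrt3/3}{m}+O(m^{-2})\bigr)$ and $1+mL_m=\sqrt3\bigl(1-\frac{6-4/\sqrt3}{m}+O(m^{-2})\bigr)$. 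The paper certifies these as one-sided bounds with coefficients $4.1$ and $6.03-4/\sqrt3$, respectively.
\end{itemize}
These $\frac1m$ corrections come from four sources:
\begin{itemize}
\item the slope of the amplitude at $x=a$ (in your normalization, $G_m/(xQ)$ for $M_m$ and $1/x$ for $L_m$);
\item the curvature $-8(7+4\sqrt3)$ of $\frac14\log(x^2Q)$ at $a$;
\item the exact value $G_m(a)=\frac{3m-4}{4(m-1)}$;
\item the exact term $-\frac{\log\sqrt3}{m}$ produced by the exponent $\frac{m-1}{m}$, which must be tracked precisely rather than absorbed into an $O(\frac{\log m}{m})$ error.
\end{itemize}

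After all explicit errors are accounted for, the certified margin in the paper is only about $0.032/m$. Your closing remark about keeping the second-order endpoint term points in the right direction, but as written the plan is built on a leading-order gap that does not exist.
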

{Since $2001 <2700$, the validity of $\Lambda_{C_{k,k}} < \Lambda_{\plane}(n+1)$ for all even $n+1 = 2k+2$ follows immediately from Theorem \ref{thm: asymptotic inequality} and the main results of \cite{BNNS}.}

The desired inequality is equivalent to  $\log(\Lambda_{\text{plane}}(m)/D_m) >0$. We begin with formal asymptotics of this quantity as $m\to \infty$. 
From \eqref{eqn: lambda plane} and \eqref{eqn: B}, we have
\begin{equation}
    \label{eqn: log ratio}
\log\frac{\Lambda_{\text{plane}}(m)}{D_m}=
-\log m
 +\frac{1}{m}\log\frac{\om_{m-1}}{\om_{m/2}^2}+\frac1m\log P_m
-\log M_m
+
\frac{m-1}{m}\log(1+mL_m)-\log\frac{\sqrt2}{2}.
\end{equation}
Recall that $P_m, M_m,$ and $L_m$ were defined in \eqref{eqn: P def}, \eqref{eqn: M def}, and \eqref{eqn: L def} respectively. 
 Stirling's formula shows that the first term on the right-hand side of \eqref{eqn: log ratio} behaves as
\begin{equation}\label{e:asymptotic1}
\frac1m\log\frac{\om_{m-1}}{\om_{m/2}^2}=-\log \sqrt{2} + \frac{\log m}{m} -\frac{1}{m} \log (2\sqrt 2) + O(m^{-2}).
\end{equation}

Moreover, using Laplace's method (in the endpoint case; see Chapter 4.3 of Asymptotic Expansions of Integrals by de Bruijn), one shows that the functions $P_m, M_m,$ and $L_m$ have asymptotics
\begin{align}
     P_m & =  \frac{2\sqrt3}{m}\left(\frac{\sqrt{3}}{2}\right)^m(1+O(m^{-1})), \label{e:asymptotic2}\\
 M_m& =\frac{3}{2m}
\left[ 1-\frac{7-\frac{4\sqrt3}{3}}{m}+O(m^{-2})\right],\label{e:asymptotic3}\\   
1+mL_m &= \sqrt3\left[
1-\frac{6-\frac4{\sqrt3}}{m}+O(m^{-2})
\right].\label{e:asymptotic4}
\end{align}
Below, we will verify these estimates and obtain quantitative errors on the remainders, in order to obtain the explicit threshold on $m$ given by Theorem \ref{thm: asymptotic inequality} above which we have $\Lambda_{\plane}(m)>D_m$. Let us first see that the asymptotics \eqref{e:asymptotic1}-\eqref{e:asymptotic4} yield $\Lambda_{\plane}(m)>D_m$ for a sufficiently large (not quantified) dimension $m$. Substituting the asymptotics into the right-hand side of \eqref{eqn: log ratio}, we see that the terms with coefficient $\log m$, the constant terms, and the terms with coefficient $\frac{\log m}{m}$ cancel (see the proof of Theorem~\ref{thm: asymptotic inequality} below for more details) and the leading contribution is of order $\frac{1}{m}$:
\[
\log\frac{\Lambda_{\text{plane}}(m)}{D_m}
=\frac{1-\log \sqrt{2}}{m}+O(m^{-2})\,.
\]
As this coefficient is positive, the desired inequality holds asymptotically as $m\to \infty$. The asymptotics above guide the estimates proven in the following lemma, which will allow us to show the estimate holds for $m$ above an explicit threshold.
\begin{lemma}\label{lemma: ests}
    The following estimates hold:
    \begin{align}
    \label{eqn: omega bound}
\frac1m\log\frac{\omega_{m-1}}{\omega_{m/2}^2}
\ge
-\log \sqrt{2}+  \frac{\log m}{m} -  \frac{\log ( 2\sqrt{2})}{m}  & \qquad\text{ for all } m\geq 1,\\
\label{eqn: P bound}
P_m\ge \bigg(\frac{\sqrt{3}}{2}\bigg)^m\frac{2\sqrt3}{m}\left(1-\frac{13}{m}\right)  & \qquad\text{ for all } m\geq 1,\\
\label{eqn: M bound}
M_m\le \frac{3}{2m}\left(1-\frac{4.1}{m}\right) & \qquad\text{ for all } m\geq 2000 ,\\
\label{eqn: L bound}
1+mL_m \ge \sqrt3\left[ 1-\frac{6.03-\frac4{\sqrt3}}{m}
\right] & \qquad\text{ for all } m\geq 2000\, .
    \end{align}
\end{lemma}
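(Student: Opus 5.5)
The four estimates are independent, and the plan is to prove each by a one-dimensional asymptotic argument with \emph{explicit} remainders, using the formal expansions \eqref{e:asymptotic1}--\eqref{e:asymptotic4} as a guide for which terms to keep.

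\emph{The bound \eqref{eqn: omega bound}.} Using $\omega_d=\pi^{d/2}/\Gamma(\tfrac d2+1)$ we get $\omega_{m-1}/\omega_{m/2}^2=\pi^{-1/2}\Gamma(\tfrac m4+1)^2/\Gamma(\tfrac{m+1}{2})$. I will apply Robbins' two-sided Stirling bounds
\[
\sqrt{2\pi}\,x^{x+1/2}e^{-x}\le\Gamma(x+1)\le\sqrt{2\pi}\,x^{x+1/2}e^{-x}e^{1/(12x)},
\]
using the lower bound in the numerator and the upper bound in the denominator. Taking $\frac1m\log$ isolates the main terms $-\log\sqrt2+\frac{\log m}{m}-\frac{\log(2\sqrt2)}{m}$. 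The leftover is a small explicit function of $m$ built from terms like $\log(1-\frac1m)$ and $\frac{1}{12x}$. I will show this leftover is nonnegative by elementary inequalities for $m$ beyond a small threshold, and check the finitely many remaining $m$ directly.

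\emph{The bound \eqref{eqn: P bound}.} Substitute $\theta=\pi/3-s$. Then $\sin\theta=\tfrac{\sqrt3}{2}\cdot\tfrac{2}{\sqrt3}\cos(s+\pi/6)$, so
\[
\sin^m\theta=(\tfrac{\sqrt3}{2})^m e^{mf(s)},\qquad f(s)=\log\big(\tfrac2{\sqrt3}\cos(s+\pi/6)\big),
\]
with $f(0)=0$, $f'(0)=-1/\sqrt3$ and $f''=-\sec^2(s+\pi/6)$. On $[0,s_0]$, with for example $s_0=\pi/12$, this gives $f(s)\ge -s/\sqrt3-cs^2$ where $c=\tfrac12\sec^2(\pi/4)=1$. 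Hence
\[
P_m\ge 2(\tfrac{\sqrt3}{2})^m\int_0^{s_0}e^{-ms/\sqrt3}(1-cms^2)\,ds .
\]
Evaluating this integral explicitly gives $\tfrac{2\sqrt3}{m}(1-\tfrac{6\sqrt3\cdot c/\sqrt3}{m})$ up to an exponentially small truncation term; this is roughly $1-6/m$, well inside $1-13/m$. For $m\le 13$ the right-hand side of \eqref{eqn: P bound} is nonpositive, so the bound is trivial there.

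\emph{The bounds \eqref{eqn: M bound} and \eqref{eqn: L bound}.} These are endpoint Laplace integrals at $x=a$. First I check that $\psi(x)=\log(h^2x^2Q(x))$ satisfies $\psi(a)=0$ (because $Q(a)=a^2$ and $ha=1$) and $\psi'(a)=-2/a^2<0$, and that $\psi$ is decreasing on $[a,b]$. Then $h^mx^{m/2}Q^{m/4}=e^{m\psi/4}$, and the integrands of $M_m$ and $L_m$ are this exponential times explicit prefactors, namely $h^{-1}x^{-1}Q^{-1}$ times $G_m$ for $M_m$, and $1$ for $L_m$. I also record $G_m(a)=1-\tfrac{m}{4(m-1)}$, which uses $3a+2a^2-1=a$.

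I split each integral at $a+\delta$. The tail $[a+\delta,b]$ is bounded by $e^{m\psi(a+\delta)/4}$ times the length, which is negligible for $m\ge2000$. On $[a,a+\delta]$ I write $y=x-a$ and use a second-order Taylor bound with a third-order remainder, where the constant is controlled by $\sup|\psi'''|$ and the sup of the prefactor's second derivative on the small interval. For \eqref{eqn: M bound} this gives $\psi\le\psi'(a)y+\tfrac12\psi''(a)y^2+Ky^3$ together with a matching upper bound on the prefactor. For \eqref{eqn: L bound} I use the reverse inequalities. Integrating the resulting exponential-times-polynomial expressions exactly reproduces the coefficients $7-\tfrac{4\sqrt3}{3}\approx4.69$ and $6-\tfrac{4}{\sqrt3}$, plus an $O(K'/m^2)$ error with $K'$ explicit.

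The main obstacle is the $L$ estimate. Its margin is only $0.03/m$, so at $m=2000$ the explicit $O(m^{-2})$ constant must be at most about $60$. This forces $\delta$ to be chosen small, say $\delta\sim m^{-1/2}$ or a fixed small number, and the third-derivative bounds to be computed carefully rather than crudely. I would first try $\delta=0.05$ with interval-style bounds on $\psi'''$, and shrink $\delta$ if needed, at the cost of a still-negligible tail. The $M$ estimate has more room, about $0.59/m$.
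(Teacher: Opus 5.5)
Your treatment of \eqref{eqn: P bound} is essentially the paper's. For \eqref{eqn: M bound}--\eqref{eqn: L bound} your strategy is also the paper's: an endpoint Laplace expansion at $x=a$, with explicit Taylor remainders and exact exponential moments.

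For \eqref{eqn: omega bound} you take a different, valid route. Robbins' bounds leave the residual $-\frac m2\log(1-\frac1m)-\frac12-\frac{1}{6(m-1)}$. This is nonnegative for $m\ge3$, using $-\log(1-u)\ge u+\frac{u^2}{2}$ and $\frac1{4m}\ge\frac1{6(m-1)}$; the cases $m=1,2$ are checked by hand. The paper instead applies the duplication formula with $z=(m+1)/4$ together with Gautschi's inequality. That gives $\omega_{m-1}/\omega_{m/2}^2\ge m\,2^{-(m+3)/2}$ outright, with no residual and no small cases to check.

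The $L$ estimate, however, fails as you have set it up, for two reasons.

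\emph{The prefactor.} It is not $1$. Since $x^{(m-2)/2}=x^{-1}x^{m/2}$, one has $L_m=\int_a^b x^{-1}e^{m\psi/4}\,dx$. Also, $\psi$ must be $\log(h^4x^2Q)$, not $\log(h^2x^2Q)$, to have $\psi(a)=0$. With prefactor $1$ the leading term would be $mL_m\to 4/|\psi'(a)|=2/h^2=2-\sqrt3$. Then $1+mL_m\to 3-\sqrt3<\sqrt3$, so \eqref{eqn: L bound} could not follow. With $x^{-1}=h-h^2y+O(y^2)$ one gets $mL_m\to\sqrt3-1$. The linear term $-h^2y$ is exactly what produces the $h^2/c^2$ part of the coefficient $6-\frac{4}{\sqrt3}$, where $c=-\Phi'(a)=2+\sqrt3$.

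\emph{The cut-offs.} Neither choice you name works.
\begin{itemize}
\item Since $b-a=\sqrt2-1-\frac{\sqrt3-1}{2}\approx0.048$, the choice $\delta=0.05$ covers the whole interval. Write $\psi=4\Phi+4\log h$ with $\Phi$ as in \eqref{eqn: Phi def}. Then $\psi'''=4\Phi'''$ and $\Phi'''(x)=x^{-3}-(x+1)(x^2+2x+7)Q(x)^{-3}\to-\infty$ as $x\to b$. So no finite $K$ exists for the lower bound $\psi\ge\psi'(a)y+\frac12\psi''(a)y^2-Ky^3$.
\item With $\delta=m^{-1/2}$ and $m=2000$, the supremum of $|\Phi'''|$ on $[a,a+\delta]$ is about $2.9\cdot10^4$, versus about $4.4\cdot10^3$ at $a$. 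This pushes the $m^{-2}$ coefficient in the lower bound for $(1+mL_m)/\sqrt3$ above $200$, far beyond your own budget $0.03m=60$.
\end{itemize}
Shrinking $\delta$, as you say you would, is the fix. But with a uniform bound on $\Phi'''$ the admissible window is only a few thousandths wide, roughly $0.002\lesssim\delta\lesssim0.007$ at $m=2000$. Below it, the discarded piece $\approx\frac hc e^{-cm\delta}$ is not negligible; above it, $|\Phi'''|\sim Q^{-3}$ is too large.

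The paper takes $\delta=2\log m/(cm)\le0.005$, where $|\Phi'''|\le 6200$. It discards $[a+\delta,b]$ outright, which is legitimate for a lower bound, and uses convexity of $x^{-1}$ and $e^t\ge1+t$. It lands at a constant of about $54$ against the budget $60$. With so little room, actually carrying out these numbers is the substance of \eqref{eqn: L bound}.

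A smaller point for $M$: your prefactor $h^{-1}x^{-1}Q^{-1}G_m$ is unbounded at $b$. So bounding the tail by $e^{m\psi(a+\delta)/4}$ times its length is not literally correct. Absorb $Q^{-1}$ into the exponential as the paper does, writing the integrand as $h^{m-1}H_m e^{(m-4)\Phi}$ with the concave cubic $H_m=xG_m$. Since $\Phi'''<0$ on $[a,b)$, you may also drop the cubic term in the upper bound for $M$; the margin there, $\approx0.59/m$, is comfortable.

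With these corrections your plan becomes the paper's proof.
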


Before proving the lemma, let us see how it yields Theorem~\ref{thm: asymptotic inequality}.
\begin{proof}[Proof of Theorem~\ref{thm: asymptotic inequality}]
{Let $m \geq 2000$.} Starting from \eqref{eqn: log ratio} and substituting \eqref{eqn: omega bound}, we find
\begin{equation}
    \label{eqn: log ratio 2}
\log\frac{\Lambda_{\text{plane}}(m)}{D_m}\geq -\log m +
  \frac{\log m}{m} -\frac{\log ( 2\sqrt{2})}{m} 
+
\frac1m\log P_m
-\log M_m
+
\frac{m-1}{m}\log(1+mL_m).
\end{equation}
Now, we take the logarithm of \eqref{eqn: P bound}. Noting that for $m \geq 26$ (i.e. $13/m\le 1/2$) we have $\log\left(1-\frac{13}{m}\right) \geq -26/m$ since  $\log(1-u)\ge -2u$ for $0\le u\le 1/2$, we arrive at the bound
\[
\frac1m\log P_m
\ge
\log \frac{\sqrt{3}}{2}-\frac{\log m}{m}+\frac{\log(2\sqrt3)}{m}
-\frac{26}{m^2}\,.
\]
Similarly, we take the logarithm of \eqref{eqn: M bound} and use concavity of the logarithm, which in particular gives $\log(1-x) \leq -x$, to obtain
\[
-\log M_m \geq 
\log m-\log\frac32+\frac{4.1}{m}.
\]
Summing these terms and combining with \eqref{eqn: log ratio 2} yields
\begin{equation}
    \label{eqn: log ratio 3}
\log\frac{\Lambda_{\text{plane}}(m)}{D_m}\geq -\log \sqrt{3} +\frac{4.1 +\log ( \sqrt{3/2})}{m} +
\frac{m-1}{m}\log(1+mL_m) - \frac{26}{m^2}.
\end{equation}
Finally, taking the logarithm of \eqref{eqn: L bound} and noticing that we have the bounds
\[
    \begin{cases}
        \log(1-t)\geq -t -t^2 & \text{for $t\in (0,1/2)$};\\
        \frac{6.03-\tfrac{4}{\sqrt3}}{m} \leq \frac{1}{2} & \text{for $m\geq 8$}; \\
        (6.03-\tfrac{4}{\sqrt3})^2 < 14 \,,
    \end{cases}
\]

we obtain
\begin{align*}
\frac{m-1}{m}\log(1+mL_m)
& \geq \log\sqrt{3} - \frac{1}{m}\log\sqrt{3} +\log\left(1- \frac{6.03-\frac4{\sqrt3}}{m}\right)\\
& \geq
\log\sqrt{3} + \frac{1}{m}\left(-6.03 + \frac4{\sqrt3}-\log\sqrt3\right)
-\frac{14}{m^2}\,.
\end{align*}
Combining this with \eqref{eqn: log ratio 3} yields
\begin{equation}
    \label{eqn: log ratio 4}
\log\frac{\Lambda_{\text{plane}}(m)}{D_m}\geq \frac{1}{m}\left[4.1 -\log ( \sqrt{2}) +\frac4{\sqrt3}-6.03\right] - \frac{40}{m^2} \,.
\end{equation}
The coefficient $C_0:= 4.1 -\log ( \sqrt{2}) +\frac4{\sqrt3}-6.03$ of \(1/m\) is positive, indeed an explicit calculation yields
\[
4.1-6.03+\frac4{\sqrt3}-\log \sqrt{2}>0.032.
\]
Thus
\[
\log\frac{\Lambda_{\text{plane}}(m)}{D_m}
\ge
\frac{C_0}{m}-\frac{40}{m^2} > \frac{1}{m} \left(0.032 - \frac{40}{m}\right)\,.
\]
When $m\ge 1251$, we have $0.032 - \frac{40}{m} > 0$, thus completing the proof.
\end{proof}

Now we prove the lemma.
\begin{proof}[Proof of Lemma~\ref{lemma: ests}]
{\it Proof of \eqref{eqn: omega bound}:}
Recall that $\omega_m=\frac{\pi^{m/2}}{\Gamma(1+m/2)}$. Using this and the identity $\Gamma(2z) = 2^{2z-1}\pi^{-1/2}\Gamma(z)\Gamma(z+\frac{1}{2})$ 
with $z=(m+1)/4$, we have
\[
\frac{\omega_{m-1}}{\omega_{m/2}^2}
=
\frac{1}{2^{(m-1)/2}}
\frac{\Gamma\left(\frac m4+1\right)^2}
{\Gamma\left(\frac m4+\frac14\right)\Gamma\left(\frac m4+\frac34\right)}.
\]
Next, applying Gautschi's inequality, which states that $\frac{\Gamma(x+1)}{\Gamma(x+s)}\ge x^{1-s}$ for all $x>0$ and $0<s<1$, with the choices $x=m/4$ and $s=1/4, 3/4$ yields
\[
\frac{\Gamma\left(\frac m4+1\right)^2}
{\Gamma\left(\frac m4+\frac14\right)\Gamma\left(\frac m4+\frac34\right)} \geq \frac{m}{4}.
\]
Therefore $\frac{\omega_{m-1}}{\omega_{m/2}^2}
\ge
\frac{m}{2^{(m+3)/2}}.$
Taking logarithms and dividing by $m$ gives \eqref{eqn: omega bound}.
\\

{\it Proof of \eqref{eqn: P bound}:} Since the integrand of $P_m$ is maximized at the right endpoint $\pi/3$ where $\sin(\pi/3) = \sqrt{3}/2$, it is convenient to change variables $x=\pi/3-\theta$ and factor out this contribution, expressing $P_m$ as  
\begin{equation}
    \label{eqn: Pn new}
P_m=2\bigg(\frac{\sqrt{3}}{2}\bigg)^m\int_0^{\pi/3}\bigg(\frac{2}{\sqrt{3}}\sin\Big(\frac{\pi}{3}-x\Big)\bigg)^m\,dx = 2\bigg(\frac{\sqrt{3}}{2}\bigg)^m\int_0^{\pi/3} \exp\{m\phi(x)\}\,dx
\end{equation}
where
\[
\phi(x):=\log\frac{2\sin(\pi/3-x)}{\sqrt{3}}.
\]
With the aim of bounding the integral below, we note that $
\phi'(0)=-\frac1{\sqrt3}$ and $\phi''(x)=-\csc^2(\pi/3-x),$ so in particular on the interval $[0,\pi/6]$ we have $\phi''(x)\ge -4$ and thus $\phi(x)\ge -\frac{x}{\sqrt3}-2x^2$ by Taylor's Theorem.
Thus
\[
\int_0^{\pi/3} \exp\{m\phi(x)\}\,dx
\ge
\int_0^{\pi/6}\exp\Big\{-\frac{mx}{\sqrt3}-2mx^2\Big\}\,dx.
\]
Now we make the change of variables $y=mx$ and further truncate the interval of integration: noting that $m\pi/6 \geq \sqrt{3}\log m$ for all $m\ge 6$, and recalling that $e^{-z}\ge 1-z$, we have
\begin{align*}
\int_0^{\pi/6}\exp\Big\{-\frac{mx}{\sqrt3}-2mx^2\Big\}\,dx
&\geq
\frac1m\int_0^{\sqrt3\log m} \exp\Big\{-\frac{y}{\sqrt3} -\frac{2y^2}{m}\Big\}\,dy\\
&\geq 
\frac1m \int_0^{\sqrt3\log m} \exp\Big\{-\frac{y}{\sqrt3}\Big\}\left(1-\frac{2y^2}{m}\right)dy\\
&\geq 
\frac1m \int_0^{\sqrt3\log m} \exp\Big\{-\frac{y}{\sqrt3}\Big\}dy
-\frac{2}{m^2} \int_0^\infty \exp\Big\{-\frac{y}{\sqrt3}\Big\}y^2 dy \\
&=\sqrt3\left(\frac{1}{m}-\frac{1}{m^{2}}\right) - \frac{12\sqrt{3}}{m^2}\,.
\end{align*}
In conclusion, we obtain
\begin{equation}\label{e:exp-lb}
\int_0^{\pi/3} \exp\{m\phi(x)\}\,dx
\ge
\frac{\sqrt3}{m}\left(1-\frac{13}{m}\right)
\end{equation}
for \(m\ge 6\). On the other hand, for \(1\le m\le 5\), $1-\frac{13}{m}$ is negative, so \eqref{e:exp-lb} holds for all integers $m \geq 1$. Together with \eqref{eqn: Pn new}, this completes the proof.\\

{\it Proof of \eqref{eqn: M bound}:} We proceed in several steps.\\

{\it Step 1:}
Define the auxiliary functions
\begin{equation}
    \label{eqn: Phi def}
    H_m(x):=xG_m(x), \qquad
\Phi(x):=\frac12\log x+\frac14\log Q(x),
\end{equation}
with $G_m$ as in \eqref{eqn: W and G} and $Q$ as in \eqref{eqn: abQ}.
Then, observing that $W_m(x)=x\,e^{(m-4)\Phi(x)}$, we can write
\[
M_m=h^{m-1}\int_a^b H_m(x)e^{(m-4)\Phi(x)}\,dx,
\]
where we recall that $a=\frac{\sqrt3-1}{2} = \tfrac{1}{h}$ and $b=\sqrt2-1$, as in \eqref{eqn: abQ}. Since we additionally have $
e^{\Phi(a)}=\frac1h$ and $-\Phi'(a)=\frac{h^2}{2}=2+\sqrt3$, we change variables $x=a+t$ and write
\[
M_m
=
h^3\int_0^{b-a}H_m(a+t)e^{(m-4)\Phi'(a) t}e^{(m-4)R(t)}\,dt,
\]
where
\[
R(t):=\Phi(a+t)-\Phi(a) - \Phi'(a)t
\]
denotes the remainder of $\Phi(a+t)$ from its linear approximation at $\Phi(a)$.
We record for later use that 
\begin{align}
    H_m(a) &= a\,\frac{3m-4}{4(m-1)},
    \qquad H_m'(a) =
-\frac{1+am}{m-1}\,, \qquad m \geq 2\,, \label{eqn: H values}\\
\Phi''(a) &= -8 \left(4 \sqrt{3}+7\right) \in [-112,-111]\,.\label{eqn: Phi''(a)}
\end{align}
Letting $c:= - \Phi'(a)$ (which is positive), our main claim is that
\begin{equation}
    \label{eqn: main m est}
        M_m \leq h^3\left[\frac{H_m(a)}{(m-4)c}+
\frac1{(m-4)^2}\left(
\frac{H_m'(a)}{c^2}+\frac{\Phi''(a) H_m(a)}{c^3}
\right)
\right]
+\frac{2h+655}{(m-4)^3}
\end{equation}
provided that $m\geq 2000.$
\\

{\it Step 2:} We split $M_m$ into pieces and bound each piece separately. To begin, let
\[
Y:=\frac{2\log (m-4)}{c}.
\]
and split $M_m = M_m^{\rm loc} + M_m^{\rm tail}$ where
\[
M_m^{\rm tail}:= h^3 \int_{Y/(m-4)}^{b-a} H_m(a+t)e^{-(m-4)ct}e^{(m-4)R(t)}\,dt\,.
\]
Note that the lower endpoint $\frac{Y}{m-4}$ of the integral is less than $b-a$ provided that $m \geq 46$. Otherwise, we simply set $M_m^{\rm tail}=0$.

  Now, observe that $\Phi$ is concave so $R(t) \leq 0$, and $|H_m|\leq 1$ on $[a,b]$. 
So, since $c= \tfrac{h^2}{2}$,
\begin{equation}\label{eqn: tail bound}
M_m^{\rm tail}
\leq h^3\int_{Y/(m-4)}^\infty e^{-(m-4)ct}\,dt
=\frac{h^3}{(m-4)c}e^{-cY}=\frac{h^3}{(m-4)c}(m-4)^{-2}= \frac{2h}{(m-4)^3}.
\end{equation}

{\it Step 3:} Toward estimating $M_m^{\rm loc}$, first note that  $
H_m(x)=x-\frac{mh}{4(m-1)}(3x^2+2x^3-x)$  is concave on $[a,b]$ since $H_m''(x)=-\frac{mh}{4(m-1)}(6+12x)$, and hence 
\[
H_m(a+t) \leq H_m(a)+H_m'(a)t\,.
\]

As a consequence, letting $s=(m-4)t$,
\begin{equation}\label{eqn: Mloc 1}
\begin{split}
      M_m^{\rm loc} &\leq h^3\int_0^{Y/(m-4)}\{H_m(a) +H_m'(a)t\} e^{-(m-4)ct}e^{(m-4)R(t)}\,dt\\
    & = \frac{h^3}{m-4}\int_0^{Y}\Big\{H_m(a)+H_m'(a)\frac{s}{m-4}\Big\} e^{-cs}e^{(m-4)R(s/(m-4))}\,ds 
\end{split}    
\end{equation}
We examine the error term $R(t)$.
First of all, for $m\geq 707,$ we have $Y/(m-4) \leq 0.005$.
Direct computation verifies that 
\[
    {\Phi'''(x) = \frac{1}{x^3} - \frac{(x+1)(x^2+2x+7)}{(1 - 2x - x^2)^3}}
\]
is a negative and decreasing function on $[a,a+0.005]$ and thus $|\Phi'''|\leq |\Phi'''(a+0.005)| \leq 6200.$
As a consequence, since $6200/6\leq 1100$, Taylor's Theorem gives
\[
R(t)=\frac{\Phi''(a)}{2}t^2+\rho_\Phi(t),
\qquad
|\rho_\Phi(t)|\le \max_{s \in [0, 0.005]} |\Phi'''(s)|\, \frac{t^3}{6} \leq 1100t^3
\] 
for $t \in [0, 0.005]$. 
In particular, recalling the bound \eqref{eqn: Phi''(a)}, for $s\in[0,Y]$  we have
\[
|(m-4)R(s/(m-4))| \leq \frac{|\Phi''(a)|}{2}\frac{(2\log(m-4)/c)^2}{m-4}+ 1100\frac{(2\log(m-4)/c)^3}{(m-4)^2} \leq \frac{1}{2}\,,
\]
when $m\geq 2000.$
Consequently, since another application of Taylor's Theorem yields $e^t \leq 1+ t+t^2$ for $|t|<1/2$, we have 
\[
e^{(m-4)R(s/(m-4))} \leq 1 + \frac{\Phi''(a)}{2}\frac{s^2}{m-4}+1100\frac{s^3}{(m-2)^2} + \left(\frac{|\Phi''(a)|}{2}\frac{s^2}{m-4}+1100\frac{s^3}{(m-4)^2}\right)^2.
\]
Inserting this bound into \eqref{eqn: Mloc 1} and using that $H_m(a) +H_m'(a) t>0$ on $[a,a+0.005]$ for all {$m\geq 6$}, this yields
\begin{align*}
    M_m^{\rm loc} \leq  M_m^{\rm main}+   M_m^{\rm err}\,,
\end{align*}
where 
\begin{equation}
    \begin{split}
 M_m^{\rm main} 
&=\frac{h^3}{m-4}\Big\{H_m(a) \int_0^Y e^{-cs} \,ds 
+\frac{H_m'(a)}{m-4}  \int_0^Y e^{-cs} s \,ds +\frac{H_m(a)\Phi''(a)}{2(m-4)}  \int_0^Y e^{-cs} s^2 \,ds\Big\}       
    \end{split}
\end{equation}
and
\begin{equation}\label{eqn: M error}
    \begin{split}
  M_m^{\rm err} &= \frac{h^3}{(m-4)^3} \frac{| H_m'(a)|| \Phi''(a)|}{2} \int_0^{\infty} e^{-cs}{s^3}\,ds \\
  &\qquad+   \frac{h^3}{m-4} H_m(a)\int_0^{\infty} e^{-cs}\left(\frac12|\Phi''(a)|\frac{s^2}{m-4}+1100\frac{s^3}{(m-4)^2}\right)^2\, ds   .    
    \end{split}
\end{equation}
{Here, we have used in addition that $H_m'(a) < 0$, so in particular
\[
    \frac{h^3}{(m-4)^2} H_m'(a)\int_0^{\infty} e^{-cs} s\left(\frac12|\Phi''(a)|\frac{s^2}{m-4}+1100\frac{s^3}{(m-4)^2}\right)^2\, ds < 0\,.
\]}

{\it Step 4:}
Let us first bound \eqref{eqn: M error}. 
Using
\begin{equation}\label{e:Gaussian-moments}
\int_0^\infty y^k e^{-cy}\,dy=\frac{k!}{c^{k+1}},
\end{equation}
and expanding the square $(\frac12|\Phi''(a)|\frac{s^2}{m-4}+1100\frac{s^3}{(m-4)^2})^2 = \frac{|\Phi''(a)|^2}{4(m-4)^2} s^4 +\frac{(1100)^2}{(m-4)^4}s^6 + 1100\tfrac{|\Phi''(a)|}{(m-4)^3} s^5$, we get
\begin{align*}
M_m^{\rm err}
&\le
\frac{h^3}{m-4}\left[ \frac{| H_m'(a)|| \Phi''(a)|}{2(m-4)^2} \frac{6}{c^4}+ H_m(a)\Big(\frac{|\Phi''(a)|^2}{4(m-4)^2}\frac{24}{c^5} + \frac{(1100)^2}{(m-4)^4}\frac{720}{c^7} + \frac{1100|\Phi''(a)|}{(m-4)^3} \frac{120}{c^6}\Big)\right]\,.
\end{align*}
{Recalling \eqref{eqn: H values} and \eqref{eqn: Phi''(a)}, in particular for $m\geq 4$ we have the crude estimates}
\begin{equation}
    \label{eqn: rough bounds}
0<H_m(a)  \leq \frac{3a}{4},\qquad |H_m'(a)|\le 1,\qquad -\Phi''(a)\leq 112,
\end{equation}
which in turn give 
\begin{equation}
    \label{eqn: M err bound}
M_m^{\rm err}
 \le
\frac{h^3}{m-4}\Big\{ \frac{112}{{2}(m-4)^2} \frac{6}{c^4}+ \frac{3a}{4}\Big\{\frac{(112)^2}{4(m-4)^2}\frac{24}{c^5} + \frac{(1100)^2}{(m-4)^4}\frac{720}{c^7} + 112\cdot\frac{1100}{(m-4)^3} \frac{120}{c^6}\Big\} \Big\}\leq \frac{650}{(m-4)^3}
\end{equation}
for $m \geq 2000.$

{\it Step 5:}
We may next evaluate $M_m^{\rm main}$ directly. Keeping in mind that 
\begin{align}
\label{eqn: integrals}
        \int_0^Y e^{-cy}\,dy & =\frac{1-e^{-cY}}{c} \leq \frac{1}{c},\\
\nonumber \int_0^Y ye^{-cy}\,dy &=\frac1{c^2}-e^{-cY}\left(\frac{Y}{c}+\frac1{c^2}\right),\\
\nonumber \int_0^Y y^2e^{-cy}\,dy &=
\frac{2}{c^3}-e^{-cY}\left(
\frac{Y^2}{c}+\frac{2Y}{c^2}+\frac{2}{c^3}
\right),
\end{align}
we arrive at
\begin{align}
\nonumber M_m^{\rm main}
&\leq
h^3\left[
\frac{H_m(a)}{(m-4)c}+\frac1{(m-4)^2}\left(
\frac{H_m'(a)}{c^2}
+
\frac{H_m(a) \Phi''(a)}{c^3}
\right)
\right] \\
&\quad
\label{eqn: error} +
\frac{h^3e^{-cY}}{(m-4)^2}
\left[
-H_m'(a)\left(\frac{Y}{c}+\frac1{c^2}\right)
-\frac12 H_m(a) \Phi''(a)
\left(
\frac{Y^2}{c}+\frac{2Y}{c^2}+\frac{2}{c^3}
\right)
\right]\,.
\end{align}
Note that {$H_m(a)>0$, while $H_m'(a)<0$} and $\Phi''(a)<0$, so each term in \eqref{eqn: error} is positive.  Recall that $Y=\tfrac{2\log (m-4)}{c}$, so $e^{-cY}=(m-4)^{-2}$. Again using the crude bounds in \eqref{eqn: rough bounds},
the term in \eqref{eqn: error} is bounded above by  $C_0\cdot(m-2)^{-4}$ for 
\[
C_0:= h^3
\left[\left(\frac{Y}{c}+\frac1{c^2}\right)
+
{112\cdot \frac{3a}{8}}
\left(
\frac{Y^2}{c}+\frac{2Y}{c^2}+\frac{2}{c^3}
\right)
\right]
\le 5(m-4)
\]
with the final inequality holding
for $m-4 \geq {156}$. 

Hence, for {$m\geq 200$, recalling that $c=-\Phi'(a)=2+\sqrt3$,} we obtain
\begin{equation}
    \label{eqn: M 2}
M_m^{\rm main} \le
h^3\left[\frac{H_m(a)}{(m-4)c}+
\frac1{(m-4)^2}\left(
\frac{H_m'(a)}{c^2}+\frac{H_m(a)\Phi''(a)}{c^3}
\right)
\right]
+\frac{5}{(m-4)^3}.
\end{equation}
Combining this with \eqref{eqn: tail bound} and \eqref{eqn: M err bound} gives \eqref{eqn: main m est}.\\

{\it Step 6:}  Finally, let us see how the estimate \eqref{eqn: main m est} implies the desired bound \eqref{eqn: M bound} on $M_m$. Note that {for any $m\geq 5$}
\begin{align*}
A:=h^3\left[\frac{H_m(a)}{(m-4)c}+
\frac1{(m-4)^2}\left(
\frac{H_m'(a)}{c^2}+\frac{H_m(a)\Phi''(a)}{c^3}
\right)
\right] &= \frac{H_m(a) h^3}{(m-4)c}\left[1 - \frac{1}{m-4}\left(\frac{4({m}a+1)}{{a(3{m}-4)}c} + \frac{-\Phi''(a)}{c^2}\right) \right] \\ 
&\leq  \frac{H_m(a) h^3}{(m-4)c}\left[1 - \frac{1}{m}\left(\frac{4(ma+1)}{(3m-4)c} + \frac{-\Phi''(a)}{c^2}\right) \right] \,.
\end{align*}
{In the final inequality we used $a\leq 1$.}
{Recalling that $c= \frac{h^2}{2}$,} we may then directly compute
\begin{align*}
    \frac{H_m(a) h^3}{(m-4)c}& = \frac{2h H_m(a)}{m}\left(\frac{m}{m-4}\right)
    = \frac{3}{2m}\left(1 +\frac{4}{m-4}\right)\left(1-\frac{1}{3(m-1)}\right) \leq \frac{3}{2m}\left(1 + \frac{4}{m-4}- \frac{1}{3(m-1)}\right)\,,
\end{align*}
so that, after first using this to crudely bound
\[
    -\frac{H_m(a) h^3}{(m-4)c} \cdot \frac{1}{m}\left(\frac{4(ma+1)}{(3m-4)c} - \frac{\Phi''(a)}{c^2}\right) \leq - \frac{3}{2m}\cdot \frac{1}{m}\left(\frac{1}{c}\frac{4(ma+1)}{3m-4} - \frac{\Phi''(a)}{c^2}\right)\,,
\]
which always holds for $m\geq 5$, we obtain
\begin{align*}
A & \leq \frac{3}{2m}\left[ 1 - \frac{1}{m}\left(\frac{4(ma+1)}{(3m-4)c} -\frac{\Phi''(a)}{c^2}\right) + \frac{4}{m-4} -\frac{1}{3(m-1)}\right]\\
&=\frac{3}{2m}\left[ 1 - \frac{1}{m}\left(\frac{4(ma+1)}{(3m-4)c} - \frac{\Phi''(a)}{c^2} - 4\frac{m}{m-4} +\frac{m}{3(m-1)}\right)\right]\,.
\end{align*}
{Recalling the additional error term from \eqref{eqn: main m est}, we observe that in order to conclude \eqref{eqn: M bound}, it remains to check that
\[
    \left(\frac{4(ma+1)}{(3m-4)c} - \frac{\Phi''(a)}{c^2} - 4\frac{m}{m-4}+\frac{m}{3(m-1)}\right) - \frac{2m^2}{3}\cdot \frac{2h+655}{(m-4)^3} \geq 4.1 
\]
for large enough $m$. This indeed holds for $m \geq 1300$.} \\

{\it Proof of \eqref{eqn: L bound}:}

We use the same notation of the proof of \eqref{eqn: M bound}. Letting $\Phi$ be as in \eqref{eqn: Phi def}, we note that 
\[
L_{m}=h^m\int_a^b e^{m\Phi(x)}x^{-1}\,dx,
\]
Recall that $-\Phi'(a)=\frac{h^2}{2}=2+\sqrt3$, so using the fact that $e^{\Phi(a)}=\frac{1}{h}$, the change of variables $x=a+t$ gives
\[
L_{m}=\int_0^{b-a} e^{m\Phi'(a)t}e^{{m}R(t)}(a+t)^{-1}\,dt,\]
where
\[
R(t):=\Phi(a+t)-\Phi(a)-\Phi'(a)t.
\]
As before, let $c:=-\Phi'(a)$, and let
\[
Z:=\frac{2\log m}{c}.
\]
Since $e^t$ and $(a+t)^{-1}$ are convex and $a^{-1} -a^{-2}t \geq0 $ for $t\in[0,Z/m]$ and any $m\geq 1$, we have 
\[
L_m \geq \int_0^{Z/m} e^{-mct}(1+mR(t))[a^{-1} -a^{-2}t]\,dt\,.
\]
As in Step 3, for $m\ge 703$, we have \(Z/m\le 0.005\), and for $t \in[0,0.005]$ we have
\[
R(t)\geq\frac12\Phi''(a)t^2- 1100t^3.
\]
Combining this with the change of variables $t=\tfrac{s}{m}$, and recalling that $a^{-1}=h$ and $\Phi''(a)<0$, we obtain
\begin{align*}
L_m &\geq \frac{1}{m}\int_0^Z e^{-cs}(1+ mR(s/m))\left(h -h^{2}\frac{s}{m}\right)\,ds\\
& {\geq}\frac{1}{m}\int_0^Z e^{-cs}\left(1+\frac{\Phi''(a)}{2}\frac{s^2}{m} - \frac{1100s^3}{m^2}\right)\left(h -h^2\frac{s}{m}\right)\,ds\\
& \geq \frac{h}{m}\int_0^Z e^{-cs}\,ds
+\frac{h}{m^2}\frac{\Phi''(a)}{2}\int_0^Z e^{-cs}s^2\,ds
-\frac{h^2}{m^2}\int_0^Z e^{-cs}s\,ds
-\frac{1100h}{m^3}\int_0^Z e^{-cs}s^3\,ds\,\\
&\geq \frac{h}{m}\int_0^Z e^{-cs}\,ds
+\frac{h}{m^2}\frac{\Phi''(a)}{2}\int_0^\infty e^{-cs}s^2\,ds
-\frac{h^2}{m^2}\int_0^\infty e^{-cs}s\,ds
-\frac{1100h}{m^3}\int_0^\infty e^{-cs}s^3\,ds\,.
\end{align*}
We can evaluate each of these four integrals explicitly. Keeping in mind \eqref{eqn: integrals} and recalling that the Gaussian moments satisfy \eqref{e:Gaussian-moments},
we have
\begin{align*}
L_m
 & \geq \frac{h}{mc} - \frac{he^{-cZ}}{mc}
+\frac1{m^2}\left(\frac{\Phi''(a) h}{c^3}
-\frac{h^2}{c^2}\right)
-\frac{6600h}{c^4m^3}\,.
\end{align*}
Since  $\frac{h}{c}=\frac2h=\sqrt3-1$ and $e^{-cZ} \leq 1$, multiplying by $m$ and adding $1$ yields
\begin{align*}
1+mL_m &\geq \sqrt{3} 
+\frac1{m}\left(\frac{\Phi''(a) h}{c^3}
-\frac{h^2}{c^2}\right)-\frac{1}{m^2}\left(
\frac{6600h}{c^4}{+ \frac{h}{c}}\right)\,\\
&\geq \sqrt{3} \left(1-\frac{1}{m}\left[\frac{1}{\sqrt{3}}\left(\frac{(-\Phi''(a))h}{c^3} +\frac{h^2}{c^2}\right)
+ \frac{1}{\sqrt{3}m}\left(
\frac{6600h}{c^4}+ \frac{h}{c}\right)\right]\right)
\,.
\end{align*}
Recalling that $\Phi''(a)=-56-32\sqrt3$, as well as the values of $c$, $a$ and $h$, a direct calculation gives
\[
    \left[\frac{1}{\sqrt{3}}\left(\frac{(-\Phi''(a))h}{c^3} +\frac{h^2}{c^2}\right)
+ \frac{1}{\sqrt{3}m}\left(
\frac{6600h}{c^4}+ \frac{h}{c}\right)\right] \leq 6.03-\frac4{\sqrt3}
\]
provided $m\geq 2000.$
This completes the proof.
\end{proof}

\subsection{Proof that $C_{3,3}\times \R$ satisfies \eqref{e:Lambda-cone-vs-Lambda-lens}}
We will proceed to construct a competitor produced from that constructed in \cite{BNNS} for $C_{3,3}$, extended suitably in the transverse direction to $C_{3,3}$ (i.e. the direction of the spine of the cone). In order to do this, let us first recall the procedure used to construct the $(SO(4)\times SO(4))$-invariant competitor $E$ for the set $K = \{(x,y) \in\R^{4} \times \R^{4} : |x|^2 = |y|^2\}$ whose boundary is $C_{3,3}$ (see \cite{BNNS}*{Section 7.2}).

    In this case, we define the ``slice" $\hat{E}$ of $E$ in a two-dimensional quadrant $Q =\{ (u,v) : u>0, v>0\}$, with $u=|(x_1,\dots,x_{4})|$ and $v= |(x_{5},\dots,x_{8})|$. Then
\[
    \partial K \cap Q = \{(u,v): u>0\,, \ v>0\,, \ u = v\}\,,
\] 
In $Q$, define $\hat E \cap \{u< v\}$ to be the circular arc of radius $r>0$ and center $(0, -h)$ forming angle $\pi/3$ with the line $\{u= v\}$ and angle $\pi/2$ with the $v$-axis $\{u=0\}$. Such an arc can be parameterized as the graph {over the $v$-axis} of the function $f: [1,-h+r] \to [0,1]$ given by $f(v) = \sqrt{r^2-(v+h)^2}$. Let $\theta = \tfrac{\pi}{4}$ denote the angle that the line $\{u= v\}$ makes with the $v$-axis $\{u=0\}$. 
Since $f(1) = 1 = \sqrt{r^2 -(1+h)^2}$ and $f'(1)= -\tan(2\pi/3-\pi/4) = -(1+h)$, this yields
\begin{equation*}
    h= \tan(2\pi/3-\pi/4) -1, \qquad 
    r = \sec(2\pi/3-\pi/4)\,.
\end{equation*}

The set $\hat E \cap \{u> v\}$ is defined analogously. Namely, we can also parameterize the arc of a circle of radius $\rho>0$ and center {$(-h,0)$} that forms angle $\pi/3$ with the line $\{u=v\}$ and angle $\pi/2$ with the $u$-axis $\{v=0\}$, via the function $u \mapsto f(u)$ over $[1,-h+r]$.

By the coarea formula, we then obtain
    \begin{align*}
        \nonumber   |E|  &= 16 \omega_{4}^2 \int_{\hat E} u^3 v^3 \, du\, dv 
        = \omega_{4}^2 \left(1+ 8\int_1^{\rho -d} v^3 \big(r^2 - (v+h)^2 \big)^{2} \,dv\right),\\
        \mathcal{H}^{7}(\partial E) &= 16 \omega_{4}^2\int_{\partial \hat E} u^3 v^3 \, d\Hcal^1 
        = 32\omega_{4}^2 r \int_{1}^{r-h} v^3 \big(r^2 -(v+h)^2 \big) \,dv\,.
    \end{align*}
    Letting $\X$ be the $(1,2)$-cluster with $\X(1) = E /|E|^{1/8}$, so that it has the same volume as $\X_{\rm lens}(1)$, with $\X(2) = K \setminus \X(1)$ and $\X(3) = (\R^8\setminus K)\setminus \X(1)$. For this cluster, we then consider the perimeter cost $M(3,3) = P(\X(1)) - \mathcal{H}^{7}(C_{3,3}\cap \X(1))$ of gluing $\X(1)$ into $C_{3,3}$, which can be expressed as
\begin{equation}\label{e:M(k,l)}
    M(3,3) = \frac{1}{|E|^{\frac{7}{8}}} \bigg(\Hcal^{7}(\partial E)-\frac{16\omega_{4}^2 \sqrt{2} }{7} \bigg)\,.
\end{equation}
We refer the reader to \cite{BNNS} for more details, noting that here we have $\lambda =1$, since $k=l=3$.

Let us now repeat this for the cone $C_{3,3}\times \R$, via an extension of the above competitor $E$ slice-wise in the transverse direction of the spine. More precisely, define a set $F \subset \R^{9}$ via
\[
    F = \{ (w,t) \in \R^{8}\times \R : w \in H(t) E\}\,,
\]
where $H:[-L,L]\to (0,\infty)$ is a Lipschitz function, to be determined. By the coarea formula, this yields
\begin{equation}\label{e:volume-coarea}
    |F| = |E| \int_{-L}^L H(t)^{8}\, dt\,.
\end{equation}
Meanwhile, letting $\Psi: \R^{9} \to \R^{9}$ denote the map $\Psi(x,t) := (H(t)x,t)$, we observe that $F = \Psi(E\times [-L,L])$, and $\partial F = \Psi(\partial E \times (-L,L))$, and 
\begin{align*}
    \Hcal^{8}(\partial F) &= \int\int_{\partial E \times (-L,L)} \Jbf^{\partial E \times (-L,L)}\Psi\, d\Hcal^7 \, dt \\
\end{align*}
where $\Jbf^{\partial E \times (-L,L)} \Psi$ is the tangential Jacobian of $\Psi$ over $\partial E \times (-L,L)$ (well-defined $\Hcal^{8}$-a.e.).

Let us first compute the differential of $\Psi$ at any point $(x,t)$ in the $\Hcal^{8}$-full measure set where the tangential Jacobian is well-defined. Since we may decompose $T_{(x,t)} (\partial E \times (-L,L))$ as $T_x \partial E \otimes T_t (-L,L) \cong T_x \partial E \otimes \R$, for any $\tau \in T_{x} \partial E$, we have
\[
    d\Psi_{x,t}(\tau,0) = (H(t)\tau,0) \qquad \text{and} \qquad d\Psi_{x,t}(0,1) = (H'(t)x,1)\,.
\]
To simplify computations, we observe that one may compute $\Jbf^{\partial E \times (-L,L)} \Psi$ as the volume of the $8$-dimensional parallelepiped spanned by the vectors $\{d\Psi_{x,t}(\tau_i,0)\}_{i=1}^{n}$, $d\Psi_{x,t}(0,1)$, for an orthonormal frame $\{\tau_i\}_{i=1}^n$ of $T_{x,t} \partial E$. This is simply the product of the area of the $7$-dimensional parallelepiped spanned by $\{H(t)\tau_i\}_{i=1}^7$ and the length of the projection of the vector $(H'(t)x,1)$ onto the orthogonal complement of the former span, namely,
\[
    H(t)^7 \cdot |\mathbf{p}_{d\Psi_{x,t}(\cdot,0)}^\perp(H'(t)x,1)|\,,
\]
where $\mathbf{p}_{d\Psi_{x,t}(\cdot,0)}^\perp$ is the projection onto the orthogonal complement of the span of $\{H(t)\tau_i\}_{i=1}^7$. Notice that the latter orthogonal complement is spanned by the orthonormal vectors $(\nu_{\partial E}(x),0)$ and $(0,1)$, where $\nu_{\partial E}(x)$ is the outward unit normal to $\partial^* E$ (defined $\Hcal^7$-a.e.) we thus have
\[
    |\mathbf{p}_{d\Psi_{x,t}(\cdot,0)}^\perp(H'(t)x,1)| = \sqrt{(H'(t))^2 (x\cdot \nu_{\partial E}(x))^2 + 1}\,.
\]
We thus arrive at
\[
    \Hcal^{8}(\partial F) = \int_{-L}^L H(t)^7 \int_{\partial E} \sqrt{(H'(t))^2 (x\cdot \nu_{\partial E}(x))^2 + 1} \, d\Hcal^7(x) \, dt\,.
\]
The coarea formula then yields
\begin{align*}
    \Hcal^{8}(\partial F) &= 16\omega_{4}^2 \int_{-L}^L H(t)^7 \int_{\partial \hat{E}} u^3 v^3 \sqrt{(H'(t))^2 ((u,v)\cdot \nu_{\partial \hat E}(u,v))^2 + 1} \, d\Hcal^1_{(u,v)} \, dt\,.
\end{align*}
Now, recalling the parameterization of $\partial\hat E$ via the two circle-arcs, we have
\[
    \nu_{\partial \hat E} = \begin{cases}
        \frac{(-f'(v),1)}{\sqrt{1+(f'(v))^2}} & \text{when $u=f(v)$, and} \\
        \frac{(-f'(u),1)}{\sqrt{1+(f'(u))^2}} & \text{when $v=f(u)$}\,,
    \end{cases}
\]
with
\[
    f'(v) = \frac{-(v+h)}{\sqrt{r^2-(v+h)^2}} \qquad \text{and} \qquad \sqrt{1+(f'(v))^2} = \frac{r}{\sqrt{r^2-(v+h)^2}}\,.
\]
Thus, when $u=f(v)$ we have
\begin{align*}
    (u,v)\cdot \nu_{\partial \hat E}(u,v) =  (f(v),v)\cdot \nu_{\partial \hat E}(f(v),v) = \frac{v(v+h)}{r} + \frac{f(v) \sqrt{r^2-(v+h)^2}}{r} =\frac{r^2-h(v+h)}{r} \,.
\end{align*}
Likewise,
\begin{align*}
    (u,f(u))\cdot \nu_{\partial \hat E}(u,f(u)) &=\frac{r^2-h(u+h)}{r} \,.
\end{align*}

In conclusion,
\begin{align*}
    \Hcal^{8}(\partial F) 
    = 32\omega_{4}^2 \int_{-L}^L H(t)^7 r\int_{1}^{r-h} v^3 (r^2-(v+h)^2) \sqrt{(H'(t))^2 \tfrac{(r^2-h(v+h))^2}{r^2}+1} \, dv\, dt\,.
\end{align*}
Just like we did for $C_{3,3}$, we may then let $\tilde\X$ be the $(1,2)$-cluster with $\tilde\X(1) = F /|F|^{1/9}$, with $\tilde\X(2) = (K\times \R) \setminus \tilde\X(1)$ and $\X(3) = (\R^9\setminus (K\times \R))\setminus \tilde\X(1)$. The normalized cost of gluing this competitor into $C_{3,3}\times \R$ is then
\begin{align*}
    \tilde{M}(3,3):= \frac{1}{|F|^{\frac{8}{9}}} \bigg(\Hcal^{8}(\partial F)-\frac{16 \omega_{4}^2 \sqrt{2} }{7} \int_{-L}^L H(t)^7\, dt\bigg)\,.
\end{align*}
Setting
\[
    H(t) = \left(1- \frac{|t|}{3}\right)_+
\]
with $L=3$, via an explicit calculation using Mathematica we obtain
\[
    \tilde{M}(3,3) \approx 7.79468\,.
\]
Recalling from \cite{BNNS} that 
\[
    \Lambda_{\plane} (9) \approx 7.93735360\,,
\]
we conclude that \eqref{e:Lambda-cone-vs-Lambda-lens} indeed holds for $C=C_{3,3}\times \R$. \qed

\begin{remark}
    We note that there is some flexibility with the choice of length scale $L=3$ for the above choice of $H$. Moreover, note that one can perform an analogous computation to the one above for general cylindrical cones over quadratic cones, i.e. $C_{k,l}\times \R^m$. However, we do not pursue this any further, since we believe that merely having one example of a cone with a non-isolated singularity for which \eqref{e:Lambda-cone-vs-Lambda-lens} holds is of interest, and there does not appear to be a single choice of $H$ that universally works in higher dimensions, nor a systematic way to look for an admissible choice of $H$ in higher dimensions. 
\end{remark}

\appendix
\section{Expansion of the area and graphicality for the Plateau problem}
\subsection{Expansion of the area functional}\label{ap:expansion}
We begin by using the area formula to write
    \[
        \Hcal^n(\graph_C^{s,r} u) = \int_{A_{s,r}} J_C u \, d\Hcal^n\,,
    \]
    where $J_C u$ is the tangential Jacobian of $h$ on $C$, given by 
    \[
        J_C u = \sqrt{\det[dH^t dH]}\,,
    \]
    where $H(x) = x + u(x)\nu_C(x)$. Letting $\{e_i\}_{i=1}^n$ denote an orthonormal frame for $TC$, we have
    \[
        dH(e_i) = e_i + \partial_i u\, \nu_C + u \nabla_{e_i} \nu_C = e_i + \partial_i u\, \nu_C - u A(e_i)\,,
    \]
    where $A(e_i) := -\sum_{j=1}^{n} (e_j \cdot \nabla_{e_i} \nu_C) e_j$, so that $A(e_i)e_j \equiv A(e_i,e_j) = - (e_j \cdot \nabla_{e_i} \nu_C)$ is the $ij$-coordinate of the second fundamental form $A$ of $C$ (previously denoted by $A_C$ but here we drop the dependence on $C$ to simplify notation) with respect to the bases $\{e_i\}$, where we identify $(TC)^\perp$ with $\R$. This yields
    \[
        dH^t dH = (I-uA)^2 + \nabla_C u\otimes \nabla_C u\,, 
    \]
    where we identify $A$ with its matrix representation. Thus, factoring out $I-uA$, which is possible for $\eps_0$ sufficiently small, we obtain
    \[
        \sqrt{\det[dH^t dH]} = \det(I-uA) \sqrt{1+ |(I- uA)^{-1} \nabla_C u|^2} \,.
    \]
    Recalling that
    \[
        \det(I - B) = 1 - \tr(B) + \tfrac{1}{2} (\tr(B)^2 - \tr(B^2)) - \tfrac{1}{6} (\tr(B)^3 - 3 \tr(B) \tr(B)^2 + 2 \tr(B^3)) + O(|B|^4)\,,
    \]
    and exploiting the minimality of $C$ together with a Taylor expansion of the square root and the observation that
    \[
        |(I- uA)^{-1} \nabla_C u|^2 = |\nabla_C u|^2 + 2u A(\nabla_C u, \nabla_C u) + O(u^2 |A|^2 |\nabla_C u|^2)\,,
    \]
    we thus have
    \begin{align*}
        \sqrt{\det[dH^t dH]} &= (1 - \tfrac{1}{2} |A|^2 u^2 - \tfrac{1}{3} \tr(A^3)u^3 + O(|A|^4 h^4)) \sqrt{1+ |(I- uA)^{-1} \nabla_C u|^2} \\
        &= \big(1 - \tfrac{1}{2} |A|^2 u^2 - \tfrac{1}{3} \tr(A^3)u^3 + O(|A|^4 u^4)\big)\big(1 + \tfrac{1}{2}|\nabla_C u|^2 + u A(\nabla_C u, \nabla_C u) + O(u^2 |A|^2 |\nabla_C u|^2)\big)\\
        &=: 1 + \tfrac{1}{2}(|\nabla_C u|^2 - |A|^2 u^2) + \Escr(u,\nabla_C u, A)\,.
    \end{align*}
    In particular, for $\eps_0$ sufficiently small 
    this yields \eqref{eq:remainder estimate} with
    \[
        |\Rcal_{s,r}(u)| \leq \int_{C\cap A_{s,r}} |\Escr(u,\nabla_C u, A)|\, d\Hcal^n \leq C_0 \int_{C\cap A_{s,r}} (|A|^3 u^3 + u |A||\nabla_C u|^2)\, d\Hcal^n\,.
    \]
    Recalling that $|A| \lesssim |x|^{-1}$ on $C\setminus \{0\}$, the claimed bound \eqref{e:remainder-cubic-bd} on $\Rcal_{s,r}$ follows.

    To see \eqref{eq:remainder first variation estimate} and \eqref{eq:closeness of second variations}, we simply observe that
    \begin{align*}
        \delta \Rcal_{s,r}(u)[\psi] &= \frac{d}{dt}\Big|_{t=0} \int_{C\cap A_{s,r}} \Escr(u+t\psi, \nabla_C u + t\nabla_C \psi, A)\, d\Hcal^n \\
        &= \int_{C\cap A_{s,r}} D_M \Escr(u,M,A)\big|_{M=\nabla_C u}\cdot \nabla_C \psi + D_v\Escr(v, \nabla_C u, A)\big|_{v=u}\cdot \psi\,d\Hcal^n\,,
    \end{align*}
    and thus, letting
\[
    \Gscr_1(u, \nabla_C u,A) := D_M \Escr(u,M,A)\big|_{M=\nabla_C u}\,, \qquad \Gscr_2(u,\nabla_C u, A):= D_v\Escr(v, \nabla_C u, A)\big|_{v=u}\,,
\]
we further have
\begin{align*}
        \delta^2 &\Rcal_{s,r}(u)[\psi_1,\psi_2] \\
        &= \frac{d}{dt}\Big|_{t=0} \int_{C\cap A_{s,r}} \Gscr_1(u+ t\psi_2, \nabla_C u + t\nabla_C \psi_2,A)\cdot \nabla_C \psi_1 + \Gscr_2(u+ t\psi_2, \nabla_C u + t\nabla_C \psi_2,A)\cdot \psi_2 \,d\Hcal^n \\
        &= \int_{C\cap A_{s,r}} D^2_M \Escr(u,M,A)\big|_{M=\nabla_C u} [\nabla_C\psi_1,\nabla_C \psi_2]\,d\Hcal^n \\
        &\qquad + \int_{C\cap A_{s,r}}  + D_v D_M \Escr(v,M,A)\big|_{v=u,M=\nabla_C u}\cdot (\psi_2 \nabla_C \psi_1 + \psi_1 \nabla_C \psi_2) + D^2_v \Escr(v,\nabla_C u,A)\big|_{v=u}\cdot  \psi_1\psi_2 \, d\Hcal^n \,.
    \end{align*}
Combining these with the fact that $|A| \lesssim |x|^{-1}$ on $C\setminus \{0\}$ and applying Cauchy-Schwarz in the case of \eqref{eq:closeness of second variations} yields the desired claim.

\subsection{Graphicality over $C$ for the Plateau problem}\label{ap:graphicality} In this section we consider the Plateau problem with data that is perturbative over $C$.

\begin{lemma}\label{lemma:small data dirichlet existence}
    Let $C \subset \mathbb{R}^{n+1}$ be an area minimizing hypercone with an isolated singularity and let $\lambda>0$. Let $G_\lambda$ denote the open set containing $C$ with $\partial G_\lambda = T_\lambda \cup T_{-\lambda}$. There exists $\delta>0$, $0<C_1<1$, $C_2>0$, and $R_0>0$ with the following property: if $C_1r\geq s\geq R_0$, $h\in C^\infty(C \cap \partial B_s)$ satisfies
\begin{align}\label{eq:invariant estimates}
  \||h|/s+|\nabla_{C\cap \partial B_s} h|+s|\nabla^2_{C\cap \partial B_s} h| \|_{L^\infty} &\leq \delta s^{-\gamma_1^--1} \qquad \mbox{and}\\ \label{eq:trapped by leaves}
  \graph_C h &\cc G_\lambda\,,
\end{align}   
$|\lambda'|<\lambda$, $v:C\to \mathbb{R}$ is such that $T_{\lambda'}=\graph_C v$ on $B_{R_0}^c$, and $S$ is an area-minimizing integral current with boundary data $\a{\graph_{C\cap \partial B_s} h}+\a{\graph_{C \cap \partial B_r} v}$, then there is $u:C\cap A_{s,r} \to \mathbb{R}$ such that $\spt \,S= \a{\graph_C^{s,r}u}$ and
\begin{equation}\label{eq:graph estimates}
    |Z_u(x)|\leq C_2 |x|^{-\gamma_1^--1}\qquad \forall x\in C \cap A_{s,r}\,,
\end{equation}
where we recall that $Z_u(x) = |\nabla_C u(x)| + |u(x)|/|x|$ as in the proof of Lemma \ref{l:area-gap-Plateau-leaf 2}.
\end{lemma}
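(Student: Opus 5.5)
The plan is to confine $S$ between two leaves by the maximum principle, and then to get graphicality and the decay \eqref{eq:graph estimates} from a scale-invariant $\varepsilon$-regularity argument. The leaves $T_{\pm\lambda}$ are already graphs over $C$ with the right decay.

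\textbf{Confinement.} By \eqref{eq:trapped by leaves} and $|\lambda'|<\lambda$, both boundary pieces of $S$ lie in $G_\lambda$. The leaves $T_{\pm\lambda}$ are smooth area-minimizing hypersurfaces, and the regions $F_{\pm\lambda}$ beyond them are perimeter minimizing. A cut-and-paste comparison, as in the proof of Lemma~\ref{lem: trapping}, shows that $\spt S\subset \overline{G_\lambda}$: replace the portion of $S$ lying inside $F_{\lambda}$ by the corresponding piece of $T_\lambda$, then apply the strong maximum principle \cite[Lemma 37.10]{Simon_GMT} to exclude contact. The same argument with $T_{\pm\lambda}$ replaced by $T_{\pm\mu}$, for $\mu$ slightly larger than $\max\{|\lambda'|,\sup$ of the leaf parameter of $\graph h\}$, does not tighten the confinement near $\partial B_s$. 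We therefore only use $G_\lambda$. By \eqref{e:strictly-min-strictly-stable} (or \eqref{e:non-str-min}), for $|x|\ge R_0$ each point of $\spt S$ then satisfies $\dist(x,C)\le C\lambda |x|^{-\gamma_1^-}$, and this is $\ll |x|$.

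\textbf{Graphicality and gradient bounds.} Fix $x\in\spt S$ with $s\le|x|\le r$ and rescale by $|x|$. Away from the boundary, that is in $B_{|x|/2}(x)$ when $|x|\in(2s, r/2)$, the rescaled current is area minimizing. It is contained in a slab of width $O(\lambda |x|^{-\gamma_1^--1})$ around the smooth compact piece $C\cap A_{1/2,2}$, and it has multiplicity one by a mass comparison with the leaves. Allard/De Giorgi $\varepsilon$-regularity (or simply smooth compactness of minimizers together with the Hausdorff closeness) then shows that the rescaled surface is a smooth graph over $C$ with $C^{1,\alpha}$ norm controlled by the height. Scaling back gives $|u|\le C|x|^{1-(\gamma_1^-+1)}$ and $|\nabla_C u|\le C|x|^{-\gamma_1^--1}$. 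Near the outer boundary $|x|\sim r$, the boundary data coincide with the smooth graph of $v$, and we use Allard's boundary regularity theorem at scale $r$.

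\textbf{The inner boundary region.} The main obstacle is the region $|x|\sim s$, where the data $h$ are arbitrary. Here I will rescale by $s$. The rescaled boundary is the graph of $h/s$ over $C\cap\partial B_1$, and by \eqref{eq:invariant estimates} it is $C^2$-close to the slice, with size $\delta s^{-\gamma_1^--1}$. Allard's boundary $\varepsilon$-regularity (Allard 1975; cf.\ Hardt--Simon boundary regularity) then gives graphicality in $A_{1,3/2}$ with $C^{1,\alpha}$ bounds proportional to the height plus the boundary norm. Its hypotheses are a density close to $1$ and a small height, and these follow from the confinement above together with a small-mass excess comparison against the cone, using $\delta$ small and $R_0$ large. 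The condition $C_1 r\ge s$ keeps the inner and outer boundary regions well separated, so that the annular estimates join up. Covering $C\cap A_{s,r}$ by these three kinds of regions yields a single-valued $u$, since multiplicity one and the orientation rule out multiple sheets, and it gives \eqref{eq:graph estimates}.
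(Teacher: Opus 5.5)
Your overall architecture is the same as the paper's. You confine $\spt S$ to $G_\lambda$ by the barrier argument of Lemma~\ref{lem: trapping}. You get graphicality from $\varepsilon$-regularity in the interior, and from Allard's boundary theorem at scale $r$ and, after rescaling by $s$, at the inner boundary. Then \eqref{eq:graph estimates} follows from uniform ellipticity and Schauder estimates. Working at scale $|x|$ rather than in unit balls, and arguing directly rather than by contradiction, are inessential differences.

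\textbf{The gap.} The gap is the claim that the interior pieces have multiplicity one ``by a mass comparison with the leaves.''
\begin{itemize}
\item Confinement to the thin set $G_\lambda$ says nothing about the number of sheets, since $k$ nested, equally oriented graphs fit in any slab.
\item A local comparison cannot lower the multiplicity. If $S\res B_{|x|/2}(x)$ consists of $k$ sheets, its slice on $\partial B_{|x|/2}(x)$ is $k$ times a single $(n-1)$-cycle. Every competitor with that boundary again has mass about $k\,\omega_n(|x|/2)^n$.
\item A global comparison with $T_{\lambda'}$ only gives $\Mbb(S)\le\Hcal^n(C\cap A_{s,r})+\mathcal{E}$. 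The error satisfies $\mathcal{E}=o(r^n)$ but need not be small compared with $|x|^n$ when $|x|\ll r$. Already the area discrepancy between $T_{\lambda'}$ and $C$ on $A_{s,r}$ is of order $r^\beta$ in the strictly stable case. So this comparison pins down the density only at scales comparable to $r$.
\end{itemize}

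\textbf{What is missing.} You need a mechanism that propagates the sheet count inward from $\partial B_r$. The paper does this as follows.
\begin{itemize}
\item For $x\in C\cap B_{r-3}\setminus B_{s+3}$, write $S\res B_2(x)=\sum_{i=1}^{I^x}\partial\llbracket E^x_i\rrbracket$ with nested perimeter minimizers. Each is a small graph over $C$ by De Giorgi's theorem.
\item $I^x$ is locally constant in $x$, and the region $C\cap B_{r-3}\setminus B_{s+3}$ is connected because the link $C\cap\partial B_1$ is.
\item $I^x=1$ near $\partial B_r$ by Allard's boundary theorem applied to $S/r\to\llbracket C\rrbracket\res B_1$. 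This is the one place where the global mass comparison is legitimately used.
\end{itemize}
A degree argument would also work. With $\pi$ the nearest-point projection onto $C$, the constancy theorem forces $\pi_\#S=\llbracket C\cap A_{s,r}\rrbracket$. Nested sheets all carry the same orientation, so their number equals the degree, namely $1$. For this you must also keep $\spt S$ in the region where $\pi$ is defined.

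\textbf{Knock-on effect at the inner boundary.} The same omission affects your inner-boundary step. The density hypothesis for Allard's boundary theorem at $\partial B_s$ does not follow from confinement plus a comparison against the cone. A useful competitor must match the slice of $S$ just outside $\partial B_s$, and you control that slice only once you know it is a single graph. This is why the paper treats the inner boundary last: it uses the interior multiplicity-one result to identify the limit of $S/s$ as $\llbracket C\rrbracket\res B_1^c$. Your closing remark that ``multiplicity one and the orientation rule out multiple sheets'' becomes valid only after this propagation is in place.
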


\begin{proof}
Fix $C$ and $\lambda>0$.

We show that there are $\delta>0$, $R_0>0$, $C_1>0$, and $C_2>0$ such that given any boundary data as above and solution to the Plateau problem $S$ in the class of integral currents, $\spt\, S$ is a graph on the desired annulus satisfying the estimate \eqref{eq:graph estimates}. To prove this, it suffices to show that for any set of sequences $\delta_j\to 0$, $C_1^j\to 0$, $R_0^j\to \infty$, $r_j$ and $s_j$ with $C_1^j r_j \geq s_j \geq R_0^j$, $\lambda'_j<\lambda$ with $v_j$ such that $T_{\lambda'_j} = \graph_C v_j$ on $B_{R_0}^c$, $h_j$ satisfying \eqref{eq:invariant estimates} with $\delta_j\to 0$, and area-minimizing integral currents $S_j$ with boundary data $\a{\graph_{C\cap \partial B_{s_j}} h_j}+\a{\graph_{C \cap \partial B_{r_j}} v_j}$, there are $u_j$ such that the desired graphicality and estimate holds for large $j$. Note that by our assumption on the boundary data, the same barrier argument as in Lemma \ref{lem: trapping} implies that $\spt \, S_j \subset G_\lambda$.

First, by a straightforward area comparison between the leaf and a competitor constructed from $S_j$, we find $\Mbb(S_j)/r_j^n \leq \mathcal{H}^n(T_{\lambda'} \cap B_{r_j})/r_j^n + {\rm o}(1)$. Since $\mathcal{H}^n(T_{\lambda'} \cap B_{r_j})/r_j^n\to \mathcal{H}^n(C \cap B_1)$, $\partial S_j/r_j\to \partial (\a{C}\res B_1)$, and $\spt\, S_j/r_j \subset G_\lambda/r_j$, it follows that $S_j/r_j$ converges to $\a{C}\res B_1$ as currents. By combining this convergence with Allard's boundary regularity theorem \cite{allardbdry}, we deduce the existence of $\varepsilon_0$ such that $S_j/r_j$ is regular (with multiplicity one) on $B_{\varepsilon_0}(x)$ for every $x\in (T_{\lambda'}/r_j) \cap \partial B_1$ and large $j$. Moreover, the convergence of $S_j/r_j$ to $C$ implies that 
that the graphicality of $S_j/r_j$ over boundary tangent planes entailed by Allard's boundary regularity may be converted into graphicality over $C$ for large enough $j$. Rescaling by $r_j$, we conclude that $S_j \res B_{(1-\varepsilon_0/2)r_j}^c$ is a multiplicity one graph over $C$ for large $j$.  

Second, we claim that for sufficiently large $j$ and each $x\in C \cap B_{r_j-3} \setminus B_{s_j+3}$, $S_j \,\cap B_1(x)$ is a multiplicity one graph over $C$ that vanishes as $j\to \infty$. We begin by decomposing $S_j \res{B_2(x)}=\sum_{i=1}^{I^{j,x}} \partial \a{E_i^{j,x}}$ for each such $x$, where $E_i^{j,x}$ is a perimeter minimizer on $B_2(x)$. By the fact that $\spt\, S_j \subset G_\lambda$ and $G_\lambda$ decays to $C$ at infinity (and so decays uniformly to a disk on $B_2(y)$ as $|y|\to \infty$ along $C$), we may apply De Giorgi's $\eps$-regularity theorem for perimeter minimizers to conclude that $\partial E_i^{j,x} \cap B_1(x)$ is a graph (of a function with $L^\infty$ norm as small as one desires for $j$ large enough) over $C$ for all $j$ large and $x\in C \cap B_{r_j-3} \setminus B_{s_j+3}$. Next, note that for $j$ large enough so that this holds for all such $x$, the number $I^{j,x}$ of sheets on $B_1(x)$ is continuous in $x$. Now by the previous paragraph $I^{j,x}=1$ near $\partial B_{r_j} \cap C$. Also, $C \cap B_{r_j-3} \setminus B_{s_j+3}$ is connected due to the connectedness of $C\cap \partial B_1$ (which is a consequence of the maximum principle). Since a continuous, discrete-valued function on a connected set is constant, we deduce that $I^{j,x}=1$ for all $j$ large and $x\in C \cap B_{r_j-3} \setminus B_{s_j+3}$. This concludes the proof of the claim in this second step. 

To finish proving the graphicality, it remains to show that $\spt \, S_j$ is graphical near $\partial B_{s_j}$. Note that we cannot deduce it directly from the compactness along scales $r_j$, since it could be that $s_j/r_j \to 0$. However, we may instead consider the rescalings $S_j/s_j$ and notice that thanks to the argument in the previous paragraph, $S_j/s_j$ converges to $\a{C}\res B_1^c$ as currents. An application of Allard's boundary regularity theorem using this convergence as we did near $\partial B_{r_j}$ finishes the proof of graphicality. 

It remains to estimate $Z_{u_j}(x)$. The estimate for $u_j$ follows immediately from $\spt\, S_j \subset G_\lambda$. Also, note that a consequence of the graphicality argument is that $\|u_j\|_{C_1(C \cap A_{s_j,r_j})} \to 0$ by the convergence of the rescaled surfaces to $C$. Since $u_j$ and $\nabla_C u_j$ vanish as $j\to \infty$, the minimal surface operator on $C$ applied to $u_j$ is uniformly elliptic, and the desired estimates follow from flattening coordinates and invoking interior and boundary Schauder estimates. \end{proof}

\bibliographystyle{alpha}

\begin{bibdiv}
	\begin{biblist}
		
		\bib{AlaBroVri23}{article}{
			author={Alama, Stan},
			author={Bronsard, Lia},
			author={Vriend, Silas},
			title={The standard lens cluster in {$\mathbb{R}^2$} uniquely minimizes
				relative perimeter},
			date={2025},
			ISSN={2330-0000},
			journal={Trans. Amer. Math. Soc. Ser. B},
			volume={12},
			pages={516\ndash 535},
			url={https://doi.org/10.1090/btran/176},
			review={\MR{4888588}},
		}
		
		\bib{AlaBroLuWan22}{article}{
			author={Alama, Stanley},
			author={Bronsard, Lia},
			author={Lu, Xinyang},
			author={Wang, Chong},
			title={Core shells and double bubbles in a weighted nonlocal
				isoperimetric problem},
			date={2024},
			ISSN={0036-1410,1095-7154},
			journal={SIAM J. Math. Anal.},
			volume={56},
			number={2},
			pages={2357\ndash 2394},
			url={https://doi.org/10.1137/22M1538545},
			review={\MR{4717765}},
		}
		
		\bib{AlaBroLuWan25}{misc}{
			author={Alama, Stanley},
			author={Bronsard, Lia},
			author={Lu, Xinyang},
			author={Wang, Chong},
			title={Decorated phases in triblock copolymers: zeroth- and first-order
				analysis},
			date={2025},
			url={https://arxiv.org/abs/2503.21684},
		}
		
		\bib{allardbdry}{article}{
			author={Allard, William~K.},
			title={On the first variation of a varifold: boundary behavior},
			date={1975},
			ISSN={0003-486X},
			journal={Ann. of Math. (2)},
			volume={101},
			pages={418\ndash 446},
			url={https://doi.org/10.2307/1970934},
			review={\MR{397520}},
		}
		
		\bib{Alm76}{article}{
			author={Almgren, F.~J., Jr.},
			title={Existence and regularity almost everywhere of solutions to
				elliptic variational problems with constraints},
			date={1976},
			ISSN={0065-9266},
			journal={Mem. Amer. Math. Soc.},
			volume={4},
			number={165},
			pages={viii+199},
			url={https://doi-org.cmu.idm.oclc.org/10.1090/memo/0165},
			review={\MR{420406}},
		}
		
		\bib{BDGG}{article}{
			author={Bombieri, E},
			author={De~Giorgi, E},
			author={Giusti, E},
			title={Minimal cones and the bernstein problem},
			date={1969},
			journal={Inventiones mathematicae},
			volume={7},
			pages={243\ndash 268},
		}
		
		\bib{BonCriTop25}{article}{
			author={Bonacini, Marco},
			author={Cristoferi, Riccardo},
			author={Topaloglu, Ihsan},
			title={A stability inequality for planar lens partition},
			date={2025},
			journal={Proceedings of the Royal Society of Edinburgh: Section A
				Mathematics},
			pages={1–34},
		}
		
		\bib{BNNS}{misc}{
			author={Bronsard, Lia},
			author={Neumayer, Robin},
			author={Novack, Michael},
			author={Skorobogatova, Anna},
			title={On the non-uniqueness of locally minimizing clusters via singular
				cones},
			date={2025},
			url={https://arxiv.org/abs/2507.13995},
		}
		
		\bib{BN}{article}{
			author={Bronsard, Lia},
			author={Novack, Michael},
			title={An infinite double bubble theorem},
			date={2025},
			journal={Ann. Inst. H. Poincar\'{e} C Anal. Non Lin\'{e}aire},
		}
		
		\bib{ColEdeSpo22}{article}{
			author={Colombo, Maria},
			author={Edelen, Nick},
			author={Spolaor, Luca},
			title={The singular set of minimal surfaces near polyhedral cones},
			date={2022},
			ISSN={0022-040X},
			journal={J. Differential Geom.},
			volume={120},
			number={3},
			pages={411\ndash 503},
			url={https://doi-org.cmu.idm.oclc.org/10.4310/jdg/1649953512},
			review={\MR{4408288}},
		}
		
		\bib{Davini04}{article}{
			author={Davini, Andrea},
			title={On calibrations for {L}awson's cones},
			date={2004},
			ISSN={0041-8994},
			journal={Rend. Sem. Mat. Univ. Padova},
			volume={111},
			pages={55\ndash 70},
			review={\MR{2076732}},
		}
		
		\bib{DPP}{article}{
			author={De~Philippis, G.},
			author={Paolini, E.},
			title={A short proof of the minimality of {S}imons cone},
			date={2009},
			ISSN={0041-8994,2240-2926},
			journal={Rend. Semin. Mat. Univ. Padova},
			volume={121},
			pages={233\ndash 241},
			url={https://doi.org/10.4171/RSMUP/121-14},
			review={\MR{2542144}},
		}
		
		\bib{EdSp}{article}{
			author={Edelen, Nick},
			author={Spolaor, Luca},
			title={Regularity of minimal surfaces near quadratic cones},
			date={2023},
			ISSN={0003-486X,1939-8980},
			journal={Ann. of Math. (2)},
			volume={198},
			number={3},
			pages={1013\ndash 1046},
			url={https://doi.org/10.4007/annals.2023.198.3.2},
			review={\MR{4660135}},
		}
		
		\bib{EdSz}{article}{
			author={Edelen, Nick},
			author={Sz\'ekelyhidi, G\'abor},
			title={A {L}iouville-type theorem for cylindrical cones},
			date={2024},
			ISSN={0010-3640,1097-0312},
			journal={Comm. Pure Appl. Math.},
			volume={77},
			number={8},
			pages={3557\ndash 3580},
			url={https://doi.org/10.1002/cpa.22192},
			review={\MR{4764748}},
		}
		
		\bib{FoiAlfBroHodZim93}{article}{
			author={Foisy, Joel},
			author={Alfaro, Manuel},
			author={Brock, Jeffrey},
			author={Hodges, Nickelous},
			author={Zimba, Jason},
			title={The standard double soap bubble in {${\bf R}^2$} uniquely
				minimizes perimeter},
			date={1993},
			ISSN={0030-8730,1945-5844},
			journal={Pacific J. Math.},
			volume={159},
			number={1},
			pages={47\ndash 59},
			url={http://projecteuclid.org/euclid.pjm/1102634378},
			review={\MR{1211384}},
		}
		
		\bib{HardtSimon84}{article}{
			author={Hardt, Robert},
			author={Simon, Leon},
			title={Area minimizing hypersurfaces with isolated singularities},
			date={1985},
			ISSN={0075-4102,1435-5345},
			journal={J. Reine Angew. Math.},
			volume={362},
			pages={102\ndash 129},
			url={https://doi.org/10.1515/crll.1985.362.102},
			review={\MR{809969}},
		}
		
		\bib{HutMorRitRos02}{article}{
			author={Hutchings, Michael},
			author={Morgan, Frank},
			author={Ritor\'{e}, Manuel},
			author={Ros, Antonio},
			title={Proof of the double bubble conjecture},
			date={2002},
			ISSN={0003-486X,1939-8980},
			journal={Ann. of Math. (2)},
			volume={155},
			number={2},
			pages={459\ndash 489},
			url={https://doi.org/10.2307/3062123},
			review={\MR{1906593}},
		}
		
		\bib{Lawson72}{article}{
			author={Lawson, H.~Blaine, Jr.},
			title={The equivariant {P}lateau problem and interior regularity},
			date={1972},
			ISSN={0002-9947,1088-6850},
			journal={Trans. Amer. Math. Soc.},
			volume={173},
			pages={231\ndash 249},
			url={https://doi.org/10.2307/1996271},
			review={\MR{308905}},
		}
		
		\bib{MaggiBook}{book}{
			author={Maggi, Francesco},
			title={Sets of finite perimeter and geometric variational problems},
			series={Cambridge Studies in Advanced Mathematics},
			publisher={Cambridge University Press, Cambridge},
			date={2012},
			volume={135},
			ISBN={978-1-107-02103-7},
			url={https://doi.org/10.1017/CBO9781139108133},
			note={An introduction to geometric measure theory},
			review={\MR{2976521}},
		}
		
		\bib{MilNee23}{misc}{
			author={Milman, Emanuel},
			author={Neeman, Joe},
			title={Plateau bubbles and the quintuple bubble theorem on
				$\mathbb{S}^n$},
			date={2023},
		}
		
		\bib{MilNee22}{article}{
			author={Milman, Emanuel},
			author={Neeman, Joe},
			title={The structure of isoperimetric bubbles on {$\mathbb{R}^n$} and
				{$\mathbb{S}^n$}},
			date={2025},
			ISSN={0001-5962,1871-2509},
			journal={Acta Math.},
			volume={234},
			number={1},
			pages={71\ndash 188},
			url={https://doi.org/10.4310/acta.2025.v234.n1.a2},
			review={\MR{4877375}},
		}
		
		\bib{MilXu25}{misc}{
			author={Milman, Emanuel},
			author={Xu, Botong},
			title={Standard bubbles (and other {M}\"obius-flat partitions) on model
				spaces are stable},
			date={2025},
			url={https://arxiv.org/abs/2504.11185},
		}
		
		\bib{NovPaoTor23}{article}{
			author={Novaga, M.},
			author={Paolini, E.},
			author={Tortorelli, V.~M.},
			title={Locally isoperimetric partitions},
			date={2025},
			ISSN={0002-9947,1088-6850},
			journal={Trans. Amer. Math. Soc.},
			volume={378},
			number={4},
			pages={2517\ndash 2548},
			url={https://doi.org/10.1090/tran/9339},
			review={\MR{4880454}},
		}
		
		\bib{NovPaoTor25}{misc}{
			author={Novaga, M.},
			author={Paolini, E.},
			author={Tortorelli, V.~M.},
			title={Existence of a non-standard isoperimetric triple partition},
			date={In preparation},
		}
		
		\bib{PaTo20}{article}{
			author={Paolini, E.},
			author={Tortorelli, V.~M.},
			title={The quadruple planar bubble enclosing equal areas is symmetric},
			date={2020},
			ISSN={0944-2669,1432-0835},
			journal={Calc. Var. Partial Differential Equations},
			volume={59},
			number={1},
			pages={Paper No. 20, 9},
			url={https://doi.org/10.1007/s00526-019-1687-9},
			review={\MR{4048329}},
		}
		
		\bib{Rei08}{article}{
			author={Reichardt, Ben~W.},
			title={Proof of the double bubble conjecture in {$\mathbb{R}^n$}},
			date={2008},
			ISSN={1050-6926,1559-002X},
			journal={J. Geom. Anal.},
			volume={18},
			number={1},
			pages={172\ndash 191},
			url={https://doi.org/10.1007/s12220-007-9002-y},
			review={\MR{2365672}},
		}
		
		\bib{ReiHeiLaiSpi03}{article}{
			author={Reichardt, Ben~W.},
			author={Heilmann, Cory},
			author={Lai, Yuan~Y.},
			author={Spielman, Anita},
			title={Proof of the double bubble conjecture in {${\bf R}^4$} and
				certain higher dimensional cases},
			date={2003},
			ISSN={0030-8730,1945-5844},
			journal={Pacific J. Math.},
			volume={208},
			number={2},
			pages={347\ndash 366},
			url={https://doi.org/10.2140/pjm.2003.208.347},
			review={\MR{1971669}},
		}
		
		\bib{DRTi23}{article}{
			author={Rosa, Antonio~De},
			author={Tione, Riccardo},
			title={The double and triple bubble problem for stationary varifolds:
				the convex case},
			date={2025feb},
			journal={Trans. Amer. Math. Soc.},
			volume={378},
			number={5},
			pages={3393\ndash 3444},
			url={https://www.ams.org/tran/0000-000-00/S0002-9947-2025-09400-5/},
		}
		
		\bib{Simon_GMT}{book}{
			author={Simon, Leon},
			title={Lectures on geometric measure theory},
			series={Proceedings of the Centre for Mathematical Analysis, Australian
				National University},
			publisher={Australian National University, Centre for Mathematical Analysis,
				Canberra},
			date={1983},
			volume={3},
			ISBN={0-86784-429-9},
		}
		
		\bib{simon1987strict}{article}{
			author={Simon, Leon},
			title={A strict maximum principle for area minimizing hypersurfaces},
			date={1987},
			journal={Journal of differential geometry},
			volume={26},
			number={2},
			pages={327\ndash 335},
		}
		
		\bib{Simon_entire_MSE}{article}{
			author={Simon, Leon},
			title={Entire solutions of the minimal surface equation},
			date={1989},
			ISSN={0022-040X,1945-743X},
			journal={J. Differential Geom.},
			volume={30},
			number={3},
			pages={643\ndash 688},
			url={http://projecteuclid.org/euclid.jdg/1214443827},
			review={\MR{1021370}},
		}
		
		\bib{Simon_Liouville}{article}{
			author={Simon, Leon},
			title={A {L}iouville-type theorem for stable minimal hypersurfaces},
			date={2021},
			ISSN={2769-8505},
			journal={Ars Inven. Anal.},
			pages={Paper No. 5, 35},
			review={\MR{4462473}},
		}
		
		\bib{SimonSolomon}{article}{
			author={Simon, Leon},
			author={Solomon, Bruce},
			title={Minimal hypersurfaces asymptotic to quadratic cones in {${\bf
						R}^{n+1}$}},
			date={1986},
			ISSN={0020-9910,1432-1297},
			journal={Invent. Math.},
			volume={86},
			number={3},
			pages={535\ndash 551},
			url={https://doi.org/10.1007/BF01389267},
			review={\MR{860681}},
		}
		
		\bib{Sternberg1992}{article}{
			author={Sternberg, P.},
			author={Williams, G.},
			author={Ziemer, W.},
			title={Existence, uniqueness, and regularity for functions of least
				gradient.},
			date={1992},
			journal={Journal für die reine und angewandte Mathematik},
			volume={430},
			pages={35\ndash 60},
			url={http://eudml.org/doc/153442},
		}
		
		\bib{JTaylor76}{article}{
			author={Taylor, Jean~E.},
			title={Regularity of the singular sets of two-dimensional
				area-minimizing flat chains modulo {$3$} in {$R\sp{3}$}},
			date={1973},
			ISSN={0020-9910,1432-1297},
			journal={Invent. Math.},
			volume={22},
			pages={119\ndash 159},
			url={https://doi.org/10.1007/BF01392299},
			review={\MR{333903}},
		}
		
		\bib{Wang}{article}{
			author={Wang, Zhihan},
			title={Mean convex smoothing of mean convex cones},
			date={2024/02/01},
			journal={Geometric and Functional Analysis},
			volume={34},
			number={1},
			pages={263\ndash 301},
			url={https://doi.org/10.1007/s00039-024-00666-x},
		}
		
		\bib{Wichiramala}{article}{
			author={Wichiramala, Wacharin},
			title={Proof of the planar triple bubble conjecture},
			date={2004},
			ISSN={0075-4102,1435-5345},
			journal={J. Reine Angew. Math.},
			volume={567},
			pages={1\ndash 49},
			url={https://doi.org/10.1515/crll.2004.011},
			review={\MR{2038304}},
		}
		
	\end{biblist}
\end{bibdiv}

\end{document}